\documentclass{article}
\usepackage[margin=1in]{geometry}
\usepackage{ dsfont }
\usepackage[utf8]{inputenc}
\usepackage{latexsym,amssymb}
\usepackage{hyperref}

\usepackage{amsthm}
\usepackage{amsmath}
\usepackage{tikz-cd,bbm,amsmath,amssymb}
\usepackage{graphicx}
\usepackage{pst-all}
\usepackage{bm}
\usepackage{caption}

\usepackage{enumitem}
\usepackage{float}

\newtheorem{theorem}{Theorem}[section]
\newtheorem{corollary}[theorem]{Corollary}
\newtheorem{lemma}[theorem]{Lemma}
\newtheorem{definition}[theorem]{Definition}

\newtheorem{proposition}[theorem]{Proposition}
\newtheorem{conjecture}[theorem]{Conjecture}

\newtheorem{remark}[theorem]{Remark}

\title{Arithmetic Geometric Model for the Renormalisation of Bi-Critical Irrationally Indifferent Attractors}
\author{Jocelyn Finbar Russell}
\date{2025}

\begin{document}

\maketitle
\begin{abstract}
    In this paper we build a geometric model for the renormalisation of irrationally indifferent fixed points of holomorphic maps  with two critical points. The model incorporates arithmetic
properties of the rotation number at the fixed point, as well as the ``angle" between the two critical points. Using this model for the renormalisation, we build a topological model for the local dynamics of such maps. We also explain the topology of the maximal invariant
set for the model, and the dynamics of the map on the maximal invariant
set.
\end{abstract}
\section{Introduction}
Let $f$ be a rational function on $\overline{\mathds{C}}$ such that $f(0)=0$ and $f'(0) = e^{2\pi i \alpha}$ with $\alpha \in \mathds{R}\setminus\mathds{Q}$. The fixed point at $0$ is called \textbf{irrationally indifferent}. Iterating $f$ near $0$ exhibits tangential movements due to the irrational rotation, and radial movements due to the non-linearity of the map. It has been well established that such maps exhibit complicated behaviour which is related to the arithmetic of $\alpha$ in \cite{siegelref, MR1367353,perezsymm, mcmullensiegel,positivearea,cheraghi2022arithmetic} and more.
\par
If there is a neighbourhood of $0$ where $f$ is conformally conjugate to the rotation $z\mapsto e^{2\pi i \alpha}z$, $f$ is \textbf{linearisable}, and we denote $\Delta_f\supsetneq\{0\}$ to be the maximal connected open set upon which this conjugacy extends. This is called the \textbf{Siegel Disk}. Otherwise, $0$ is a \textbf{Cremer Fixed Point} and we define $\Delta_f = \{0\}$. In both cases it is a consequence of results by Fatou and Mane \cite{fatou,manee} that there exists a recurrent critical point $c \in \mathds{C}$ of $f$ which satisfies
\begin{equation*}
    \omega(c):= \overline{\cup_{i=1}^{\infty}\{f^{\circ i}(c)\}} \supseteq \partial\Delta_f.
\end{equation*}
Following Cheraghi in $\cite{cheraghi2022topology}$:
\begin{definition}
 The set $\omega(c)$ is called the \textbf{irrationally indifferent attractor}.
\end{definition}
\begin{definition}
\label{bicritdef}
    Let $f$ be a rational map of degree $d\geq3$ with an irrationally indifferent fixed point at $0$. We say $f$ has a \textbf{bi-critical irrationally indifferent attractor} at $0$, if there exist at most two \textbf{recurrent} critical points of $f$, $c_1$ and $c_2$, satisfying the following:
    \begin{itemize}
        \item if $c_1=c_2$ then the local degree at the critical point is three, otherwise both are simple critical points,
        \item  $c_1\in\omega(c_2)$ and $c_2\in\omega(c_1)$.
        \item $\omega(c_1)=\omega(c_2)\supset\partial\Delta_f $.

    \end{itemize}
    
\end{definition}
Note that a rational function of very large degree may have a bi-critical irrationally indifferent attractor. It is also possible that there are non-recurrent critical points whose orbit accumulates on $\partial\Delta_f$, but we do not count these. Naturally, a uni-critical irrationally indifferent attractor is the case where there is only a single recurrent simple critical point, $c$, satisfying $\omega(c)\supset\partial\Delta_f$.
\par
A main tool for studying the dynamics of $f$ is renormalisation methods. Roughly speaking, the renormalisation operator is the first return map to a sector at $0$, viewed through a suitable change of coordinate. Due to the nature of the change of coordinate, such renormalisation can be unwieldy. For the unicritical case, the contribution of the tangential and radial movements near $0$, culminate in a geometric behaviour which may be simplified in an algebraic form by the following formula: 
\begin{definition}[Arithmetic of Orbits]
\label{dyncurvem}
    For $\alpha\in[-1/2,1/2]\setminus\{0\}$, define $Q_{\alpha}: \mathds{R}\mapsto (0,\infty)$ by 
$$
\begin{aligned}
    Q_{\alpha}(x) = \frac{1}{1 + \min\{x, |\alpha|^{-1} - x\}} && \text{for}\:\:\:x\in[0,1/|\alpha|),
\end{aligned}
$$
and then extend $Q_\alpha$ to $\mathds{R}$ by $Q_{\alpha}(x+1/|\alpha|)=Q_\alpha(x)$.
\end{definition}
In  \cite{cheraghi2022arithmetic}, Cheraghi builds a toy model for the renormalisation of unicritical irrationally indifferent attractors. That is, a class of maps,
    $$
    \mathds{F}_1 = \{ \mathds{T}_{[\alpha]}: \mathds{M}_{[\alpha]} \rightarrow \mathds{M}_{[\alpha]} \}_{[\alpha]\in(\mathds{R}\setminus\mathds{Q})/\mathds{Z}}.
    $$
    and a renormalisation which preserves $\mathds{F}_1$, and, for $\alpha\in(-1/2,1/2)\setminus\mathds{Q}$, acts as
    $$
        \mathcal{R}(\mathds{T}_{[\alpha]}: \mathds{M}_{[\alpha]} \rightarrow \mathds{M}_{[\alpha]})= (\mathds{T}_{[-1/\alpha]}: \mathds{M}_{[-1/\alpha]} \rightarrow \mathds{M}_{[-1/\alpha]}).
        $$
    In this model, $\mathds{M}_{[\alpha]}$ is compact, connected, and star-like with $\{0,+1\}\subset 
\mathds{M}_{[\alpha]}$, and $\mathds{T}_{[\alpha]}$ is a homeomorphism acting as a rotation by $2\pi \alpha$ in the tangential direction, while radially there exists a constant $C>0$ such that
for $\alpha\in(-1/2,1/2)\setminus\mathds{Q}$ and $0\leq k\leq 1/|\alpha|$ we have
        $$
        C^{-1}Q_{\alpha}(k)<|\mathds{T}^{\circ k}_{[\alpha]}(+1)|<CQ_{\alpha}(k).
        $$
When a single critical point is associated to the fixed point at 0, it is expected that the map $\pi: f \mapsto \mathds{T}_{[\log f'(0)/(2\pi i)]}$ conjugates a renormalisation scheme for such holomorphic maps to $\mathcal{R}:\mathds{F}\rightarrow \mathds{F}$. Using near-parabolic renormalisation defined in \cite{inoushish}, Cheraghi proved this for $\alpha$ of \textit{high type} \cite{cheraghi2022topology}. In particular this induces a conjugacy of $f:\omega(c)\rightarrow\omega(c)$ to $\mathds{T}_{[\alpha]}:\partial\mathds{M}_{[\alpha]}\rightarrow\partial \mathds{M}_{[\alpha]}$.
\par
For bi-critical irrationally indifferent attractors the non-linearity is fundamentally different, and the arithmetic of an additional parameter, $[\beta]\in\mathds{R}/\mathds{Z}$, comes into play. $\beta$ is the internal angle between $c_1$ and $c_2$, which is determined by the dynamics of $f$. When $c_1, c_2\in\partial\Delta_f$, $\beta$ is the angle between rays accumulating at $c_1$ and $c_2$.
\begin{theorem}
\label{mainA}
    There exists a class of maps:
    $$
    \mathds{F}_2 = \{ \mathds{T}_{[\alpha],[\beta]}: \mathds{M}_{[\alpha],[\beta]} \rightarrow \mathds{M}_{[\alpha],[\beta]} \}_{[\alpha]\in(\mathds{R}\setminus\mathds{Q})/\mathds{Z}, [\beta]\in\mathds{R}/\mathds{Z}}
    $$
    and a renormalisation operator $\mathcal{R}:\mathds{F}_2 \rightarrow \mathds{F}_2$ satisfying the following properties:
    \begin{enumerate}[label=(\roman*)]
        \item $\mathds{M}_{[\alpha],[\beta]} \subset \mathds{C}$ is a compact star-like set with $\{0,+1,e^{2\pi i \beta}\} \subset \mathds{M}_{[\alpha],[\beta]}$,
        \item $\mathds{T}_{[\alpha],[\beta]}: \mathds{M}_{[\alpha],[\beta]} \rightarrow \mathds{M}_{[\alpha],[\beta]}$ is a homeomorphism which acts as rotation by $2\pi\alpha$ in the tangential direction,
        \item 
        for all $\alpha\in [-1/2,1/2]\setminus\mathds{Q}$, and $\beta\in[-1/2,1/2]$:
        $$
        \mathcal{R}(\mathds{T}_{[\alpha],[\beta]}: \mathds{M}_{[\alpha],[\beta]} \rightarrow \mathds{M}_{[\alpha],[\beta]})= (\mathds{T}_{[-1/\alpha],[-\beta/\alpha]}: \mathds{M}_{[-1/\alpha],[-\beta/\alpha]} \rightarrow \mathds{M}_{[-1/\alpha],[-\beta/\alpha]})
        $$
        \item For all $\alpha\in [-1/2,1/2]\setminus\mathds{Q}$, and $\beta\in[-1/2,1/2]$, there exists a constant $C>0$ such that for all $0\leq k\leq 1/|\alpha|$ we have
        $$
        C^{-1}\sqrt\frac{|\alpha|Q_{\alpha}(k)Q_{\alpha}(k-\beta/\alpha)}{|\beta|+|\alpha|}<\mathds{T}^{\circ k}_{[\alpha],[\beta]}(+1)<C\sqrt\frac{|\alpha|Q_{\alpha}(k)Q_{\alpha}(k-\beta/\alpha)}{|\beta|+|\alpha|},
        $$
        where $Q_{\alpha}$ is the arithmetic function defined in \ref{dyncurvem},
        
    \end{enumerate}
    
\end{theorem}
\begin{figure}[H]
\captionsetup{justification=centering}
    \centering
\includegraphics[width=0.7\textwidth]{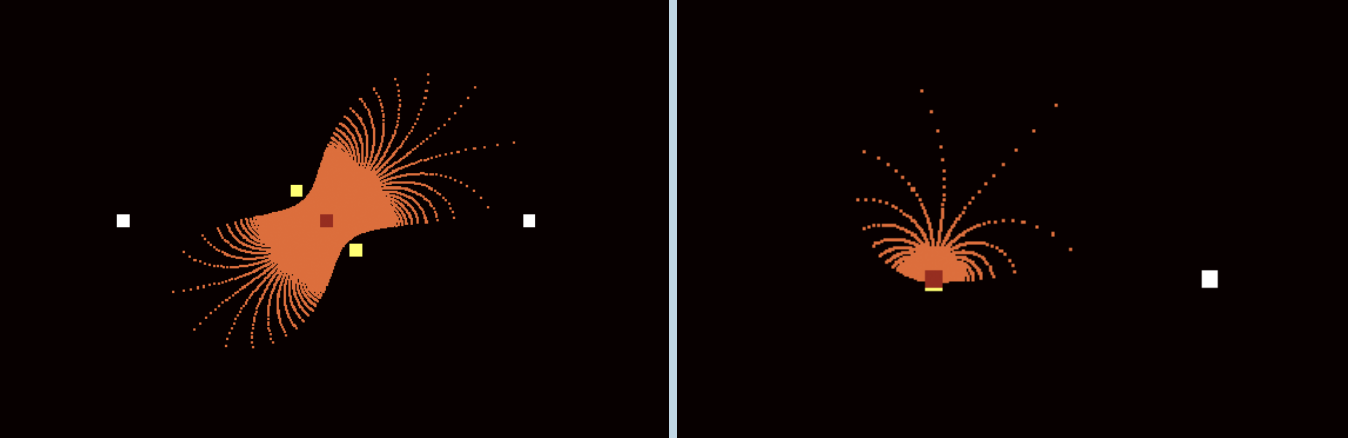}

    \caption{Computations describing fatou coordinates for bi-critical cubics with a fixed $\alpha$ and $\beta=1/2$ on the left, and $\beta=0$ with the same $\alpha$ on the right. The white points are the critical points, the yellow repelling fixed points, red the fixed point at $0$, and orange represents the forward $\lfloor1/|\alpha|\rfloor$ images of a ray coming out of $0$ to one of the nearby repelling fixed points.}
\end{figure}
 The family of rotations in the unit disk with two points marked on its boundary satisfies items $(i-iii)$, with the renormalisation corresponding to the return map to a sector of angle $\alpha$. However, property $(iv)$, and invariance of $\mathds{F}_2$ under a renormalisation operator, $\mathcal{R}$, corresponding to a return map, essentially determines $\mathds{F}_2$ in a unique fashion. The explicit formula in $(iv)$ is obtained from comparing the orbits using Fatou coordinates. Following the uni-critical case we expect $\mathds{A}_{[\alpha],[\beta]}:=\partial\mathds{M}_{[\alpha],[\beta]}$
is a toy model for $\omega(c)$, while $\mathds{M}_{[\alpha],[\beta]}$ a toy model for $\omega(c)\cup \Delta_f$.
\begin{theorem}[Trichotomy]
For all $[\alpha]\in (\mathds{R}\setminus\mathds{Q})/\mathds{Z}$ and $[\beta]\in\mathds{R}/\mathds{Z}$
    \begin{itemize}
        \item If $\alpha$ is a Herman number, $\mathds{A}_{[\alpha],[\beta]}$ is a Jordan curve,
        \item If $\alpha$ is a Brjuno number but not a Herman number, $\mathds{A}_{[\alpha],[\beta]}$ is a one-sided hairy Jordan curve.
        \item If $\alpha$ is not a Brjuno number, $\mathds{A}_{[\alpha],[\beta]}$ is a Cantor bouquet.
    \end{itemize}
\end{theorem}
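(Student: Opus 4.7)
The plan is to analyse $\mathds{A}_{\alpha,\beta}$ via the iterated renormalisation tower $\{\mathcal{R}^n(\mathds{T}_{\alpha,\beta})\}_{n \geq 0}$. By (iii), iterating $\mathcal{R}$ realises (up to sign) the Gauss map on $\alpha$ together with a cocycle on $\beta$, so the $n$-th level is governed by the continued-fraction convergents $p_n/q_n$ of $\alpha$. I would identify $\mathds{A}_{\alpha,\beta}$ with a limit object built from the nested sectors cut out by the closest-return orbits $\{\mathds{T}^{\circ k}_{\alpha,\beta}(+1) : 0 \leq k \leq q_{n+1}\}$, following the uni-critical construction of Cheraghi \cite{cheraghi2022arithmetic} but transported through the bi-critical renormalisation.

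The radial estimate (iv) is the central tool. Since $Q_\alpha(q_n) \asymp q_{n+1}/q_n$ and the $\beta$-shift $Q_\alpha(k - \beta/\alpha)$ only perturbs by a uniformly bounded factor on each closest-return window, the inter-level scaling is, up to multiplicative constants, the square root of the uni-critical scaling, damped by $\sqrt{|\alpha|/(|\beta|+|\alpha|)}$. Crucially, logarithmic summation across levels yields bounds of the form $\sum_n \log(1/|\mathds{T}^{\circ q_n}_{\alpha,\beta}(+1)|) \asymp \sum_n \log q_{n+1}/q_n$, so the Brjuno sum of $\alpha$ is exactly what decides whether the tower converges to a non-trivial thickness, independently of $\beta$. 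This isolation of the $\alpha$-arithmetic is what allows the topological type to be determined by $\alpha$ alone despite the $\beta$-dependence of the model.

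With this in hand I would run three parallel arguments. In the Herman case, the stronger summability provides enough uniform fatness at every scale that a Moore-type decomposition argument collapses $\mathds{A}_{\alpha,\beta}$ to a topological circle. In the Brjuno-but-not-Herman case, Brjuno summability produces a non-degenerate Jordan core at the limit of the tower, while failure of the Herman condition forces hairs to emerge at infinitely many resonant levels; the symmetry (vi) between the two critical points together with the star-like hypothesis (i) then pins these hairs to a single radial side, giving the one-sided hairy structure. In the non-Brjuno case divergence of $\sum \log q_{n+1}/q_n$ prevents any nested-sector core from surviving, and the combinatorics of the tower produces a standard Cantor bouquet.

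The hard part will be the intermediate Brjuno-but-not-Herman case. I must show both (a) that the Jordan core persists in spite of the square-root damping $\sqrt{|\alpha|/(|\beta|+|\alpha|)}$ potentially concentrating mass unevenly in the angular direction, and (b) that the hairs are genuinely \emph{one}-sided rather than appearing on both sides of the core. Point (b) requires sharp control on the shifted factor $Q_\alpha(k-\beta/\alpha)$ to witness the angular asymmetry introduced by $\beta \neq 0$, and is the step at which the bi-critical geometry departs substantively from the uni-critical picture; the symmetry identity in (vi) should be the lever that fixes the correct side unambiguously.
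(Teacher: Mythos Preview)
Your proposal has a genuine gap at the point where you claim the $\beta$-shift ``only perturbs by a uniformly bounded factor on each closest-return window''. This is false: the factor $Q_\alpha(k-\beta/\alpha)$ can be of a completely different order from $Q_\alpha(k)$, and after taking logs and summing over levels the discrepancy does not wash out. What actually emerges from the tower is not the classical Brjuno sum of $\alpha$ but a \emph{modified} sum
\[
\mathcal{B}(\alpha,\beta)=\sum_{n\ge 0}\Big(\prod_{i=0}^{n-1}\alpha_i\Big)\,\mathcal{M}(\alpha_n,\beta_n),
\qquad
\mathcal{M}(r,s)=\tfrac12\log\tfrac1r+\tfrac12\log\tfrac{1}{r+s},
\]
and, for the Jordan-curve case, a modified Herman-type condition $\mathcal{H}^\ast$ built from analogous bi-critical functions $h_{r,s}$. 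These are the quantities that \emph{directly} control the height functions $b_n$ and peak functions $p_n$ of the model; the trichotomy in its natural form reads ``$\mathcal{H}^\ast$ / Brjuno-not-$\mathcal{H}^\ast$ / non-Brjuno''. The statement you are asked to prove --- that the topology depends on $\alpha$ alone --- therefore reduces to the arithmetic equivalences $\mathcal{B}(\alpha,\beta)<\infty\Leftrightarrow \mathcal{B}(\alpha)<\infty$ and $(\alpha,\beta)\in\mathcal{H}^\ast\Leftrightarrow \alpha\in\mathcal{H}$. The first is easy (one has $\tfrac12\mathcal{B}(\alpha)\le\mathcal{B}(\alpha,\beta)\le\mathcal{B}(\alpha)$), but the Herman equivalence is the real content and requires a separate argument comparing the iterated $h_{r,s}^{-1}$ to the iterated $h_r^{-1}$; your outline never confronts this.

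Two smaller points. Your mechanism for one-sidedness is wrong: the symmetry (vi) swaps $(\alpha,\beta)$ with $(\alpha,-\beta)$ and says nothing about which side of the core the hairs lie on. One-sidedness is immediate from the star-like structure in (i) --- every ray of $\mathds{M}_{\alpha,\beta}$ is a segment from $0$, so hairs can only protrude outward from the Jordan core. And the ``Moore-type decomposition'' in the Herman case is not what is needed; the paper's route is to show that the peak and base functions coincide ($p_n\equiv b_n$) precisely under $\mathcal{H}^\ast$, which gives the Jordan curve directly without any quotienting.
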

While this establishes that broadly the topological possibilities are the same as the uni-critical case in \cite{cheraghi2022arithmetic}, the topological and geometric features in each case of theorem \ref{mainA} might be drastically different. For instance: 
\begin{theorem}[Conjugacy Classes]
    For any non-Brjuno $\alpha\in\mathds{R}\setminus\mathds{Q}$ there is a dense set of $[\beta]\in\mathds{R}/\mathds{Z}$ such that 
$(\mathds{T}_{[\alpha],[\beta]}:\mathds{M}_{[\alpha],[\beta]}\rightarrow\mathds{M}_{[\alpha],[\beta]})$ 
    is not topologically conjugate to
    $
    (\mathds{T}_{[\alpha]}:\mathds{M}_{[\alpha]}\rightarrow \mathds{M}_{[\alpha]})
    $
\end{theorem}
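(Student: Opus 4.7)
My plan is to exploit property (iv) of Theorem \ref{mainA} to extract a topological invariant that detects the presence of the second critical point at $e^{2\pi i\beta}$, and then to exhibit a dense family of $\beta$ on which this invariant separates the bi-critical model from the unicritical one.

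First I would fix a non-Brjuno $\alpha$. The trichotomy theorem implies that both $\mathds{A}_{\alpha,\beta}$ and $\mathds{A}_\alpha$ are Cantor bouquets, so the obstruction to conjugacy has to be intrinsic to the dynamics rather than purely ambient-topological. Any conjugating homeomorphism $h:\mathds{M}_{\alpha,\beta}\to\mathds{M}_\alpha$ with $h\circ\mathds{T}_{\alpha,\beta}=\mathds{T}_\alpha\circ h$ is forced to send the unique fixed point $0$ to $0$, so the obstruction must live in the combinatorial structure of $(\mathds{T},\mathds{M})$ near $0$.

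The quantitative heart of the argument is the following comparison. In the unicritical model $|\mathds{T}^{\circ k}_\alpha(+1)|\asymp Q_\alpha(k)$, so per period of length $\lfloor 1/|\alpha|\rfloor$ the orbit of $+1$ dips close to $0$ exactly once, at $k\equiv 0$. By property (iv),
\[
|\mathds{T}^{\circ k}_{\alpha,\beta}(+1)| \asymp \sqrt{\frac{|\alpha|\,Q_\alpha(k)\,Q_\alpha(k-\beta/\alpha)}{|\beta|+|\alpha|}},
\]
so the bi-critical orbit dips twice per period: at $k\equiv 0$ and at $k\equiv \beta/\alpha$. I would choose $\beta$ with $\beta/\alpha \bmod 1$ bounded away from $0$ and then use property (iii) to iterate the renormalisation, checking that the two dips remain well-separated at every combinatorial scale indexed by the convergents $p_n/q_n$ of $\alpha$. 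The set of $\beta$ for which the renormalised iterates $\beta_n$ eventually merge with $0$ modulo $1$ is meagre under the Gauss-type dynamics of (iii), so the set of admissible $\beta$ is dense in $\mathds{R}$.

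The final and hardest step is to promote the "double dip" into a topological invariant preserved by $h$. A natural candidate is the asymptotic growth rate, as $n\to\infty$, of the count of connected components of $\mathds{A}\cap U_n$, where $\{U_n\}$ is a nested neighbourhood basis of $0$ chosen intrinsically from the Cantor bouquet structure (for instance, by thresholding the natural "generation" function coming from the trichotomy construction). Under the double-dip mechanism above, the bi-critical construction doubles the count at each renormalisation level compared to the unicritical one, so the two growth rates differ by a multiplicative factor of at least $2^n$. The main obstacle is verifying that this growth rate really is preserved by any $h$ with $h(0)=0$ that intertwines the dynamics: this reduces to characterising the generation function on $\mathds{A}$ purely by iterated topological refinements of neighbourhoods of $0$ together with the $\mathds{T}$-action, so that $h$ must conjugate one such characterisation to the other, which is then contradicted by the factor-of-two mismatch produced by the chosen $\beta$.
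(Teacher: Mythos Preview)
Your proposal has a genuine gap at exactly the place you flag as hardest. The growth rate of the number of components of $\mathds{A}\cap U_n$ is not shown to be a conjugacy invariant: there is no canonical choice of the nested basis $\{U_n\}$ that an arbitrary homeomorphism fixing $0$ must respect, and different bases can give wildly different counts. Without a purely topological characterisation of the ``generation function'' you allude to, the factor-of-two mismatch you produce from property~(iv) remains a metric statement, not a topological one, and the argument does not close.

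The paper's route is both simpler and avoids this obstacle. In a Cantor bouquet the rays are precisely the arc-components of $\mathds{M}\setminus\{0\}$, so any conjugacy sends rays to rays; combined with the fact that both maps act as rotation by $\alpha$ in the angular direction, the conjugacy must respect angles. The paper then chooses $\beta$ so that its successive digits satisfy $\beta_n\in[1/2-\alpha_n,1/2]$, which places the angle $\beta_n/\alpha_n$ at the location $[1/(2\alpha_n)-1,1/(2\alpha_n)]$ where the \emph{unicritical} height function $b_n$ attains its maximum. Since $\alpha$ is non-Brjuno this maximum is $+\infty$, so in $\mathds{M}_\alpha$ the ray at that angle degenerates to $\{0\}$; but in $\mathds{M}_{\alpha,\beta}$ the same angle always carries the point $e^{2\pi i\beta}$ by property~(i) of Theorem~\ref{mainA}, hence a ray of positive length. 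A conjugacy would then send a nondegenerate arc-component to a single point, which is impossible. Density of such $\beta$ follows by letting the first $N$ digits $b_0,\dots,b_{N-1}$ be arbitrary and imposing the constraint only from level $N$ onward.

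Your ``double dip'' observation from property~(iv) is morally the same phenomenon---the second critical point inserts an extra ray at angle $\beta$---but the paper reads the invariant off a single ray rather than through asymptotic component counts, which is what lets it bypass the invariance problem you could not resolve.
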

There is an ``extended gauss map" defined on $([-1/2,1/2]\setminus\mathds{Q})\times[-1/2,1/2]$ by
    $(\alpha,\beta)\mapsto (d(1/\alpha,\mathds{Z}),d(\beta/\alpha,\mathds{Z}))$, which generates the sequence of pairs $(\alpha_n,\beta_n)\in([0,1/2]\setminus\mathds{Q})\times[0,1/2]$ (see specifics in chapter 3 and 2.1). In the study of the topology of $\mathds{M}_{[\alpha],[\beta]}$ we come across modified notions of Brjuno and Herman conditions. For instance, the new Brjuno function is
    $$
\mathcal{B}(\alpha,\beta) := \frac{1}{2}\sum_{n=0}^{\infty} \left(\prod_{i=0}^{n-1}\alpha_i\right)\log\left(\frac{1}{\alpha_i(\alpha_i+\beta_i)}\right),
$$
which satisfies 
$$
    \frac{1}{2}\mathcal{B}(\alpha)<\mathcal{B}(\alpha,\beta) \leq \mathcal{B}(\alpha)
    $$ 
    where $\mathcal{B}$ is the Brjuno function defined by Yoccoz \cite{cheraghi2022arithmetic,MR1367353}. $\mathcal{B}(\alpha,\beta)$ will determine the linearisability of $\mathds{T}_{[\alpha],[\beta]}$. The analogue of the Herman condition in our setting is more delicate and is described in Chapter 4. On the other hand, when $\beta=0$, the bi-critical model is actually dynamically conjugate to the uni-critical model, which corroborates with some work done on higher degree uni-critical maps \cite{chéritat2020near,yang2021parabolic}. We also classify properties of the map $\mathds{T}_{[\alpha],[\beta]}$ itself in chapter 5. 
\par
Up until the past decades, non-linearisable fixed points remained mysterious, but via the development of renormalisation for a broad class of maps \cite{MR1367353,inoushish,dudko2024uniformprioriboundsneutral}, it is hoped that further progress will be made. There has been some partial progress with regard to multi-critical renormalisation of holomorphic maps \cite{chéritat2025invariantclassesparabolicrenormalization, chéritat2020near, yang2021parabolic}.
\begin{conjecture}

\label{rempp}
    Let $f$ be a rational map with a bi-critical irrationally indifferent attractor at $0$ which satisfies $f'(0)=e^{2\pi i \alpha}$ for $\alpha\in(\mathds{R}\setminus\mathds{Q})$, and let $c_1$ and $c_2$ be the two critical points that satisfy definition \ref{bicritdef}. Then for all $k\geq 0$ there exists a uniform constant $C>0$ such that for $c\in\{c_1,c_2\}$:
    $$
    C^{-1} |\mathds{T}^{\circ k}_{[\alpha],[\beta]}(+1)| \leq |f^{\circ k}(c)| \leq C|\mathds{T}^{\circ k}_{[\alpha],[\beta]}(+1)|,
    $$
    where $[\beta]\in\mathds{R}/\mathds{Z}$ denotes the internal angle from $c$ to the other critical point.
\end{conjecture}
\tableofcontents
\section{A Bi-critical Toy Model}
\begin{remark}
    Let $D\subset\mathds{C}$ and let $f, g$ be functions on $D$ that take values in $\mathds{C}$ (in particular both functions could be real valued). In this paper I will occasionally say that there exists a constant $C>0$ such that for all $x\in D$, we have:
    $$
    f(x) \asymp_C g(x)
    $$
    to mean that for all $x\in D$,
    $$
    C^{-1}|g(x)|\leq |f(x)|\leq C|g(x)|
    $$
    while I will write
    $$
    f(x)\sim_C g(x)
    $$
    if for all $x\in D$,
    $$
    |g(x)-f(x)|\leq C
    $$
\end{remark}
\subsection{Successive Numbers}
There is a well-studied natural action on $\alpha$ that arises from the renormalisation \cite{cheraghi2022arithmetic}. Keeping the key model of rigid rotation maps on the closed unit disk with marked points at $+1$ and $e^{2\pi i \beta}$ in mind, there is also a natural action on the $\beta$ parameter. Fix a pair $(\alpha,\beta) \in \mathds{R} \setminus \mathds{Q} \times \mathds{R}$. Now define the sequence $(\alpha_n,\beta_n,\epsilon_n,\delta_n)\in ((0,1/2)\setminus\mathds{Q})\times[0,1/2]\times\{-1,1\}^2$ as follows:
\begin{equation*}
    \begin{aligned}
        \alpha_0 = d(\alpha,\mathds{Z}),& & \alpha_{n+1} = d(1/\alpha_n,\mathds{Z})
    \end{aligned}
\end{equation*}
Note now that there are unique integers $a_n$ such that:
\begin{equation*}
    \begin{aligned}
        \alpha = a_{-1}+\epsilon_0\alpha_0,& & 1/\alpha_n = a_n +\epsilon_{n+1}\alpha_{n+1}   
    \end{aligned}
\end{equation*}
Similarly define:
\begin{equation*}
    \begin{aligned}
        \beta_0 = d(\beta,\mathds{Z}), && \beta_{n+1} = d(\beta_n/\alpha_n,\mathds{Z})
    \end{aligned}
\end{equation*}
for which there exists unique integers $b_n$ such that:
$$
\beta_n/\alpha_n = b_n + \delta_{n+1}\beta_{n+1}
$$
This operation is equivalent to what is called an ``Ostrowski expansion", and it has been studied in detail in works such as \cite{berthé2023dynamicsostrowskiskewproducti}. 
\begin{remark}
    Given a fixed $\alpha \in \mathds{R}\setminus\mathds{Q}$, and a fixed sequence of integers $0\leq b_n\leq a_n$, it is possible to pick a unique $\beta$ such that $\beta_n \in [b_n\alpha_n,(b_n+1)\alpha_n]$.
\end{remark}
\begin{proof}
    For any choice of integer sequence $0\leq b_n\leq a_n$ simply define $\beta$ as $\beta = \delta_0b_0 \alpha_0+\delta_1b_1\alpha_1\alpha_0+...$ and clearly this will have the desired property.
\end{proof}
Given a fixed $\beta$, we will also apply the above remark to arbitrary elements $x\in\mathds{R}$, as the continued fraction expansion with respect to $\alpha$, say $x_n$, represents where renormalisation takes particular points.
\subsection{Change Of Coordinates for the Bi-Critical Renormalisation Tower}
The key building ingredient for developing the renormalisation model is a new change of coordinates function that will depend on two parameters, $(r,s)\in(0,1/2]\times[0,1/2]$. The main building block for the change of coordinates function is the map:
$$
g_r(w) := |e^{-3\pi r}-e^{-2\pi r i w}|
$$
which traces the modulus of a circle slightly displaced from $0$. Write:
$$
G_{r,s}(x+iy) := rx+\frac{i}{4\pi}(\log g_r (x+iy+1/2) + \log g_r(x-s/r +iy+1/2))
$$
\begin{figure}[H]
\captionsetup{justification=centering}
    \centering
\includegraphics[width=0.4\textwidth]{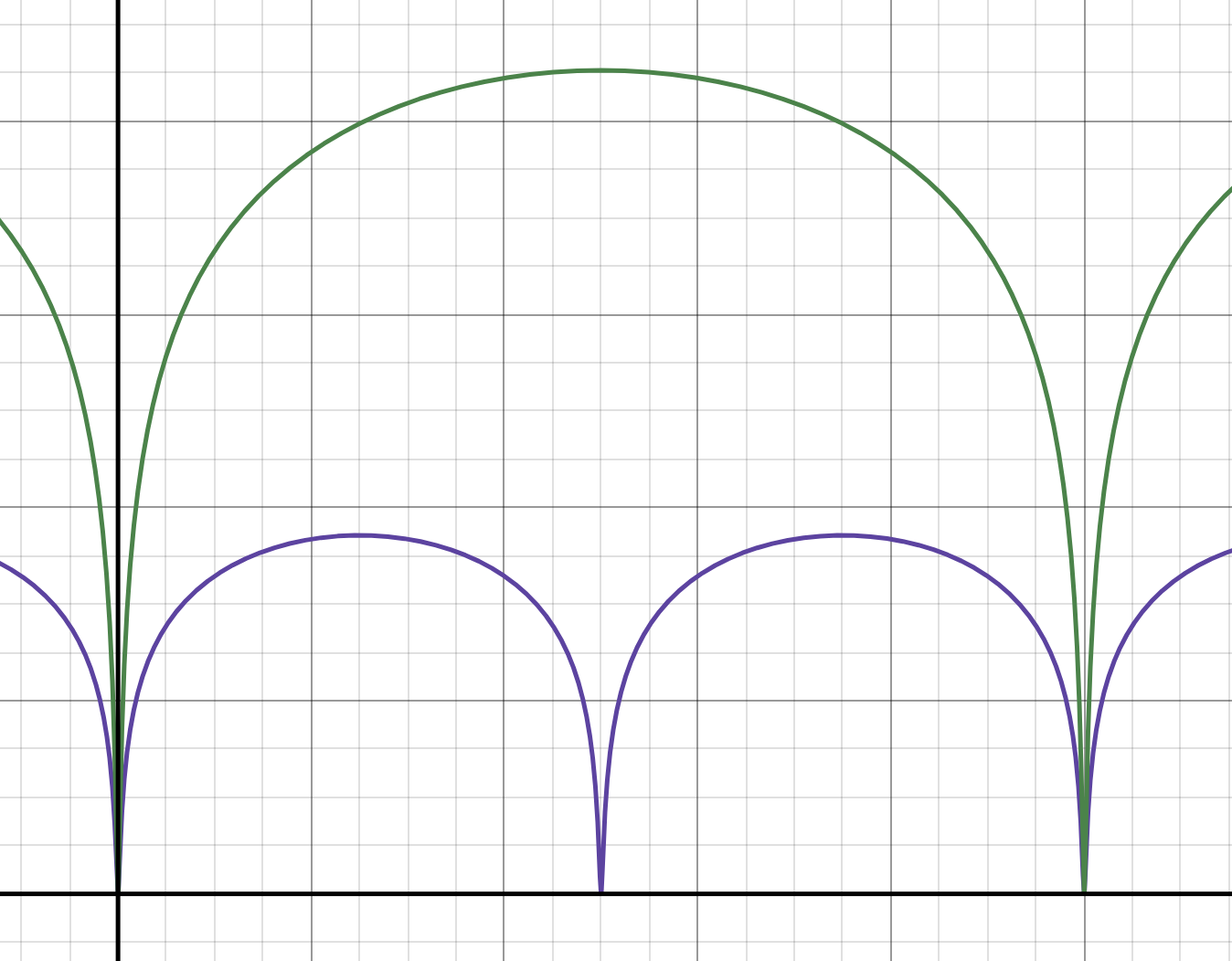}

    \caption{The image of the same horizontal line for $G_{r,0}$ and $G_{r,1/2}$}
\end{figure}
The function defined by $x+iy \mapsto G_{r,s}(x+iy)-G_{r,s}(0)$ will represent the fundamental geometry of our change of coordinates. Interestingly $G_{r,s}$ bears a relation to the uni-critical change of coordinates defined by Cheraghi:
\begin{remark}
\label{cheraghi}
Let $Y_r(x+iy) := r\mathrm{Re}(w)+\frac{i}{2\pi}\log \frac{g_r(w+1/2)}{g_r(1/2)}$
be the change of coordinates employed by Cheraghi in \cite{cheraghi2022topology,cheraghi2022arithmetic}. Then there exists a $C(r,s)\in\mathds{R}$ that only depends on $s$ and $r$ such that:
$$
G_{r,s}(x+iy) = \frac{1}{2}(Y_r(x+iy) + Y_{r}(x+iy-s/r)+s)+iC(r,s)
$$
\end{remark}
The $G_{r,s}$ function gives us the geometry of the change of coordinates, however in order to develop a renormalisation model it is also necessary for certain specific functional relations to be satisfied. For this reason the change of coordinates function we end up with is a bounded alteration of this $x+iy \mapsto G_{r,s}(x+iy)-G_{r,s}(0)$ function. Because it represents the geometric features of the change of coordinates, we first collect some facts about $G_{r,s}(w)$.
\begin{proposition} The map $G_{r,s}$ satisfies the following properties:
\begin{enumerate}
        \item The map $G_{r,s}$ is injective on $\overline{\mathds{H}'}$ and $G_{r,s}(\overline{\mathds{H}'})\subset\mathds{H}'$,
        \item For every $w\in\overline{\mathds{H}'}$,
        $$
        G_{r,s}(w+1/r) = G_{r,s}(w)+1
        $$
        \item For all $w_1,w_2 \in \overline{\mathds{H}'}$,
        $$
        |G_{r,s}(w_1)-G_{r,s}(w_2) | \leq 0.9 |w_1 - w_2|
        $$
    \end{enumerate}
\end{proposition}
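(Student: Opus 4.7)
The plan is to use the decomposition stated in the preceding remark,
$$G_{r,s}(w) = \tfrac{1}{2}\bigl(Y_r(w) + Y_r(w - s/r)\bigr) + \tfrac{s}{2} + iC(r,s),$$
which expresses $G_{r,s}$ as an average of two translated copies of Cheraghi's uni-critical change of coordinates $Y_r$. Since $Y_r$ is already known to satisfy the direct analogues of properties (1)--(3) (with the same Lipschitz constant $0.9$) from \cite{cheraghi2022topology, cheraghi2022arithmetic}, the strategy is to transfer each property through the averaging.

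Properties (2) and (3) follow immediately. For (2), applying $Y_r(w + 1/r) = Y_r(w) + 1$ to both summands gives $G_{r,s}(w+1/r) = G_{r,s}(w) + \tfrac{1}{2} + \tfrac{1}{2} = G_{r,s}(w) + 1$. For (3), the triangle inequality combined with the $0.9$-Lipschitz bound on $Y_r$ yields
$$|G_{r,s}(w_1) - G_{r,s}(w_2)| \leq \tfrac{1}{2}(0.9)|w_1-w_2| + \tfrac{1}{2}(0.9)|w_1-w_2| = 0.9|w_1-w_2|.$$

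For (1), the image inclusion $G_{r,s}(\overline{\mathds{H}'}) \subset \mathds{H}'$ follows by taking imaginary parts in the decomposition: $\operatorname{Im}(G_{r,s}(w))$ is the average of the imaginary parts of $Y_r(w)$ and $Y_r(w-s/r)$ plus the constant $C(r,s)$, and both translated copies take values in $\mathds{H}'$. For injectivity, note that $\operatorname{Re}(G_{r,s}(x+iy)) = rx$ depends only on $x$, so $G_{r,s}$ preserves vertical lines and it suffices to show that for each fixed $x$ the map $y \mapsto \operatorname{Im}(G_{r,s}(x+iy))$ is strictly monotonic. Because $Y_r$ is itself injective and also preserves vertical lines, each summand $y \mapsto \operatorname{Im}(Y_r(x+iy))$ and $y \mapsto \operatorname{Im}(Y_r(x-s/r+iy))$ is strictly monotonic in $y$ in the same direction (they are translates of the same function), so their sum is strictly monotonic, yielding injectivity. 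If one prefers a direct check, strict monotonicity reduces to the positivity of $\partial_y \log g_r(x+iy+1/2)$, which follows by differentiating $g_r(x+iy+1/2)^2 = e^{-6\pi r} + e^{4\pi r y} - 2 e^{-3\pi r}\cos(2\pi r (x+1/2)) e^{2\pi r y}$.

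The main technical obstacle I anticipate is ensuring that the boundary behaviour transfers cleanly: specifically, that the strict inclusion $G_{r,s}(\overline{\mathds{H}'}) \subset \mathds{H}'$ holds all the way down to the lower boundary of $\overline{\mathds{H}'}$, and that the $y$-derivative above stays bounded away from zero as $y$ approaches its infimum. This likely requires quoting the precise boundary estimates on $Y_r$ from Cheraghi's work together with an argument that the constant $C(r,s)$ absorbs any slack introduced when averaging the two translates whose lower bounds occur at different $x$-values. Once that margin is made uniform, the averaging framework closes everything else mechanically.
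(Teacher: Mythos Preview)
Your approach is essentially the paper's: it too writes $G_{r,s}$ as the average of $Y_r(w)$ and $Y_r(w-s/r)$ plus constants, derives (2) and (3) exactly as you do, and proves injectivity by the same vertical-line-plus-monotonicity argument. The one point of divergence is the image inclusion $G_{r,s}(\overline{\mathds{H}'})\subset\mathds{H}'$: your averaging argument runs into precisely the obstacle you flag, since $C(r,s)=\tfrac{1}{2\pi}\log g_r(1/2)$ is negative (of order $\log r$), so ``average of two values in $\mathds{H}'$ plus $C(r,s)$'' does not land in $\mathds{H}'$ without further work. The paper bypasses this by invoking Lemma~\ref{boundab}, which bounds $\mathrm{Im}\,G_{r,s}(x+iy)$ between the minimum and maximum of the two un-normalised quantities $\tfrac{1}{2\pi}\log g_r(x+iy+1/2)$ and $\tfrac{1}{2\pi}\log g_r(x-s/r+iy+1/2)$, and asserts each of these stays above $-1$; this avoids any reference to $C(r,s)$.
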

\begin{proof}
These results essentially follow from remark \ref{cheraghi} and the fact that in Cheraghi's paper \cite{cheraghi2022arithmetic}, he proves that $Y_r(x+iy)$ satisfies all three of the conditions itself. Since vertical lines are sent to vertical lines we must just check how $G_{r,s}(x+iy)$ behaves on each lines. However it is clear that both functions $y\mapsto \mathrm{Im}(Y_r(x+iy)$ and $y\mapsto \mathrm{Im}(Y_r(x+iy-s/r)$ are strictly increasing on $\mathds{H}'$, therefore $y\mapsto \mathrm{Im}G_{r,s}(x+iy)$ is too, which is enough to prove injectivity. Then to show that $\mathrm{Im}(G_{r,s}(x+iy))>-1$ we refer to \ref{boundab} to note that since $\mathrm{Im}(G_{r,s}(x+iy))$ is an average of two $Y_r$ functions, the vertical heights always stay above and below these two functions which are in fact always greater than $-1$ on $\mathds{H}$'. To prove contractivity, note that for all $w_1,w_2\in\mathds{H}'$:
    \begin{equation*}
        \begin{aligned}
            |G_{r,s}(w_1)-G_{r,s}(w_2)| &= |\frac{1}{2}(Y_r(w_1)-Y_r(w_2) + Y_{r}(w_1-s/r)-(Y_{r}(w_2-s/r)))| \\
            &\leq |\frac{1}{2}(Y_r(w_1)-Y_r(w_2))| + |\frac{1}{2}(Y_r(w_1-s/r)-Y_r(w_2-s/r))|
            \\
            &\leq \frac{1}{2}(0.9+0.9)|w_1-w_2| = 0.9|w_1-w_2| \qedhere
        \end{aligned}
    \end{equation*}
\end{proof}
\begin{lemma}["$g_r$" lemma]
\label{grlem}
    Let $0<t<\pi$, then there exists a $C_{\ref{grlem}}(t)>0$ such that:
    \begin{equation*}
        \begin{aligned}
            (C_{\ref{grlem}}(t))^{-1}<g_r(x+iy)/e^{2\pi ry}<C_{\ref{grlem}}(t) & & \forall x+iy \in \mathds{H}'_{[t/r,(1-t)/r]+\frac{1}{r}\mathds{Z}}
        \end{aligned}
    \end{equation*}
    In a similar spirit, let $y\geq M/r$ for $M>0$ arbitrary. Then there exists a $D_{\ref{grlem}}(M)>0$ such that:
    \begin{equation*}
        \begin{aligned}
            (D_{\ref{grlem}}(M))^{-1}<g_r(x+iy)/e^{2\pi ry}<D_{\ref{grlem}}(M) & & \forall x+iy \in \mathds{H}' + Mi
        \end{aligned}
    \end{equation*}
\end{lemma}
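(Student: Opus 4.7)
The plan is to exploit the explicit form of $g_r$. Writing $w = x+iy$, we have $e^{-2\pi r i w} = e^{2\pi r y} e^{-2\pi r i x}$, so
$$
\frac{g_r(x+iy)}{e^{2\pi r y}} = \left| e^{-3\pi r - 2\pi r y} - e^{-2\pi r i x} \right|.
$$
Setting $s := e^{-3\pi r - 2\pi r y} \geq 0$ and $u := 2\pi r x$, this reduces the lemma to estimating the distance $|s - e^{-iu}|$ from a non-negative real number to a point on the unit circle. The identity $|s - e^{-iu}|^2 = s^2 - 2s\cos u + 1 = (s - \cos u)^2 + \sin^2 u$ is the one I would use throughout.

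For the upper bound in both inequalities, the triangle inequality gives $|s - e^{-iu}| \leq s + 1$. Since $w \in \mathds{H}'$ means $y$ is bounded below (say $y \geq -1$) and $r \in (0, 1/2]$, we get $s \leq e^{-\pi r} \leq 1$, so $|s - e^{-iu}| \leq 2$ uniformly, which handles both $C_{\ref{grlem}}(t)$ and $D_{\ref{grlem}}(M)$ from above.

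For the lower bound in the first inequality, the hypothesis $rx \in [t, 1-t] \pmod 1$ translates to $u \in [2\pi t, 2\pi(1-t)] \pmod{2\pi}$, which keeps $e^{-iu}$ bounded away from the point $1$. I would split on the sign of $\cos u$: if $\cos u \leq 0$ then $(s - \cos u)^2 \geq \cos^2 u$ (since $s \geq 0$), giving $|s - e^{-iu}|^2 \geq \cos^2 u + \sin^2 u = 1$; otherwise $u \in [2\pi t, \pi/2) \cup (3\pi/2, 2\pi(1-t)]$, and on these sub-arcs $|\sin u| \geq \sin(2\pi t)$, yielding $|s - e^{-iu}|^2 \geq \sin^2(2\pi t)$. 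In either case $|s - e^{-iu}| \geq \min(1, \sin(2\pi t)) > 0$, furnishing the constant $C_{\ref{grlem}}(t)$.

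For the second inequality with $y \geq M/r$, the factor $s = e^{-3\pi r - 2\pi r y} \leq e^{-2\pi r y} \leq e^{-2\pi M}$ is uniformly small, so the reverse triangle inequality gives $|s - e^{-iu}| \geq 1 - e^{-2\pi M}$ while the triangle inequality gives $|s - e^{-iu}| \leq 1 + e^{-2\pi M}$, and $D_{\ref{grlem}}(M) := \max\{(1 - e^{-2\pi M})^{-1},\, 1 + e^{-2\pi M}\}$ works. There is no serious obstacle: the lemma is essentially a direct computation once one factors out $e^{2\pi r y}$ and reinterprets $g_r(x+iy)/e^{2\pi r y}$ as a planar distance between a point on the unit circle and a small non-negative real; the only mild care is the case split $\cos u \lessgtr 0$ needed to handle the values of $u$ near $\pi$, where $\sin u$ itself fails to bound $|s - e^{-iu}|$ away from zero.
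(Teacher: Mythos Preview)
Your proof is correct and follows essentially the same approach as the paper: both factor out $e^{2\pi r y}$ to reduce to the planar distance $|s - e^{-iu}|$, use the triangle inequality for the upper bound, bound the first lower bound by $\sin(2\pi t)$ (the paper via the imaginary part after locating the minimum at the interval endpoint, you via the algebraic case split on $\cos u$), and use the reverse triangle inequality for the second part. Your case split on the sign of $\cos u$ is a mild refinement that cleanly handles $u$ near $\pi$, but the two arguments are otherwise the same and yield the same constants.
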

\begin{proof}
    From the explicit form $g_r(w) = |e^{-3\pi r}-e^{-2\pi r i w}|$ we see:
    $$
    (g_r(x+iy))^2= (e^{2\pi r y})^2 - 2 e^{2\pi r y-3\pi r}\cos(2\pi r x) +(e^{-3\pi r})^2
    $$
    From this it is clear that on $x\in[0,1/r)$ the function $x\rightarrow g_r(x+iy)$ only has turning points at $x=0$ and $x=1/2$, which will be minima and maxima respectively. The maximal value that $g_r(w)$ takes will be $e^{-3\pi r}+e^{2\pi r y}\leq 2e^{2\pi r y}$, while since it will be strictly decreasing as $x$ moves away from $x=1/2$, the minimum value that it will take on $[t/r,(1-t)/r]$ is $|e^{-3\pi r}-e^{2\pi t}e^{2\pi r y}|$, but then it is clear by the geometric picture
    \begin{figure}[H]
\captionsetup{justification=centering}
    \centering
\includegraphics[width=0.3\textwidth]{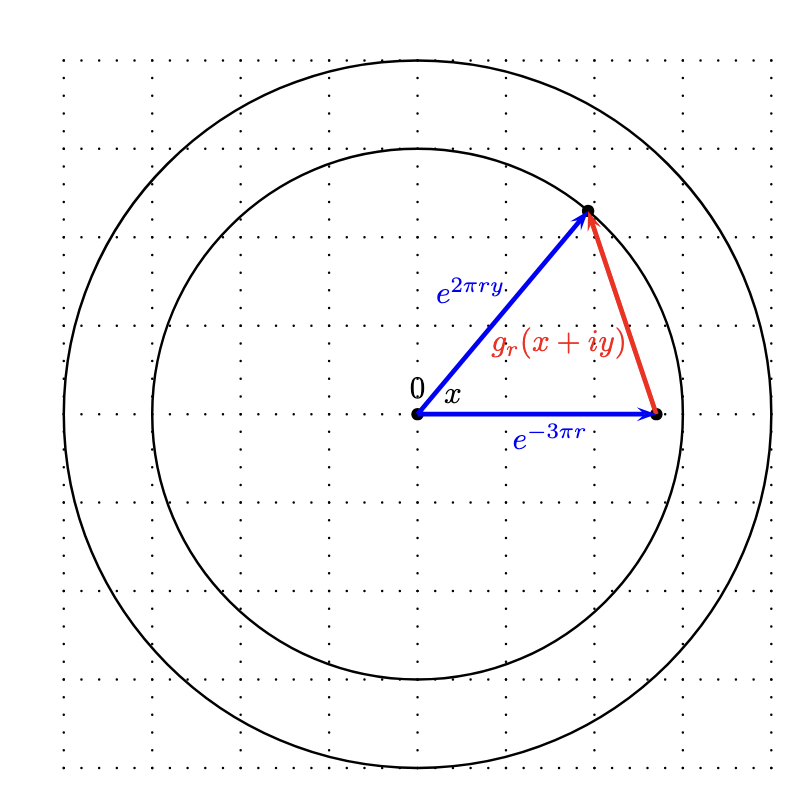}

    \caption{The \textcolor{red}{$g_r(x+iy)$} functions are generated by the distances from the point \textcolor{blue}{$e^{-3\pi r}$} to a family of circles}
\end{figure} 
    we have 
    \begin{equation*}
        \begin{aligned}
            |e^{-3\pi r}-e^{2\pi t}e^{2\pi r y}|\geq \mathrm{Im}(e^{2\pi t}e^{2\pi r y}) = \sin(2\pi t)e^{2\pi r y}
        \end{aligned}
    \end{equation*}
    So lemma is proven with $C_{\ref{grlem}}(t) = \max\{1/\sin(2\pi t),2\}$. For the second part we have that $\forall x \in\mathds{R}$ and $y\geq M/r$:
    $$
    g_r(x+iy)/e^{2\pi r y} = |e^{-3\pi r-2\pi r y}-e^{-2\pi x}| \leq 1+e^{-3\pi r-2\pi r y}\leq 1+e^{2M\pi}
    $$
    and also 
    $$
    |e^{-3\pi r-2\pi r y}-e^{-2\pi x}| \geq e^{-3\pi r-2\pi r y}-1\geq e^{2\pi M}-1
    $$
    Hence the result is true by setting $D_{\ref{grlem}}(M) = \max\{1+e^{2M\pi},e^{2\pi M}-1\}$
\end{proof}
The first part says that for any set of $x$ values uniformly bounded away from the minima points $x\mapsto g_r(x+iy)$ will be close to $e^{2\pi r y}$. On the other hand the second statement proves that as long as $yr$ is bounded below by a value greater than $0$, the map $y\mapsto g_r(x+iy)$ will be uniformly close to $e^{2\pi r y}$.
\begin{proposition}
\label{maxmin}
We now catalog the (approximate) locations of the minima and maxima of the functions $x\mapsto \mathrm{Im}(G_{r,s}(x+iy))$ for each $y\geq-1$. At $x_{max}:=(1+s)/2r-1/2$ for all $s\leq1/2$, $r$, and $y$ there exists a constant $C_{\ref{maxmin}}>0$ such that:
$$
|\mathrm{Im}(G_{r,s}(x_{max}+iy)) - \sup_{x\in\mathds{R}}\mathrm{Im}(G_{r,s}(x+iy))| \leq C_{\ref{maxmin}}
$$
which also is such that for $\tilde{x}\in\{0,s/r\}$ and for all $y,r,s$:
$$
|\mathrm{Im}(G_{r,s}(\tilde{x}+iy)) - \inf_{x\in\mathds{R}}\mathrm{Im}(G_{r,s}(x+iy))| \leq C_{\ref{maxmin}}
$$
\end{proposition}
This essentially gives us approximate minima and maxima - even if the maximum or minimum lies at other locations it does not matter because the heights at these particular choices of points will always be close enough. Define the following functions:
\begin{definition}
\label{boundab}
    Let 
    $$
    a_{r,s}(x,y)=\min\{\frac{1}{2\pi}\log g_r (x+iy+1/2),\frac{1}{2\pi}\log g_r(x-s/r +iy+1/2)\}
    $$ and 
    $$
    b_{r,s}(x,y) = \max\{\frac{1}{2\pi}\log g_r (x+iy+1/2),\frac{1}{2\pi}\log g_r(x-s/r +iy+1/2)\}
    $$ 
    \end{definition}
    Clearly for all $r,s,x,y$ $\mathrm{Im}G_{r,s}(x+iy)\in [a_{r,s}(x,y),b_{r,s}(x,y)]$ and furthermore each of the two $a_{r,s}$, $b_{r,s}$ functions is equal to a translated $\frac{1}{2\pi}\log g_r$ function.

\begin{figure}[H]
\captionsetup{justification=centering}
    \centering
\includegraphics[width=0.3\textwidth]{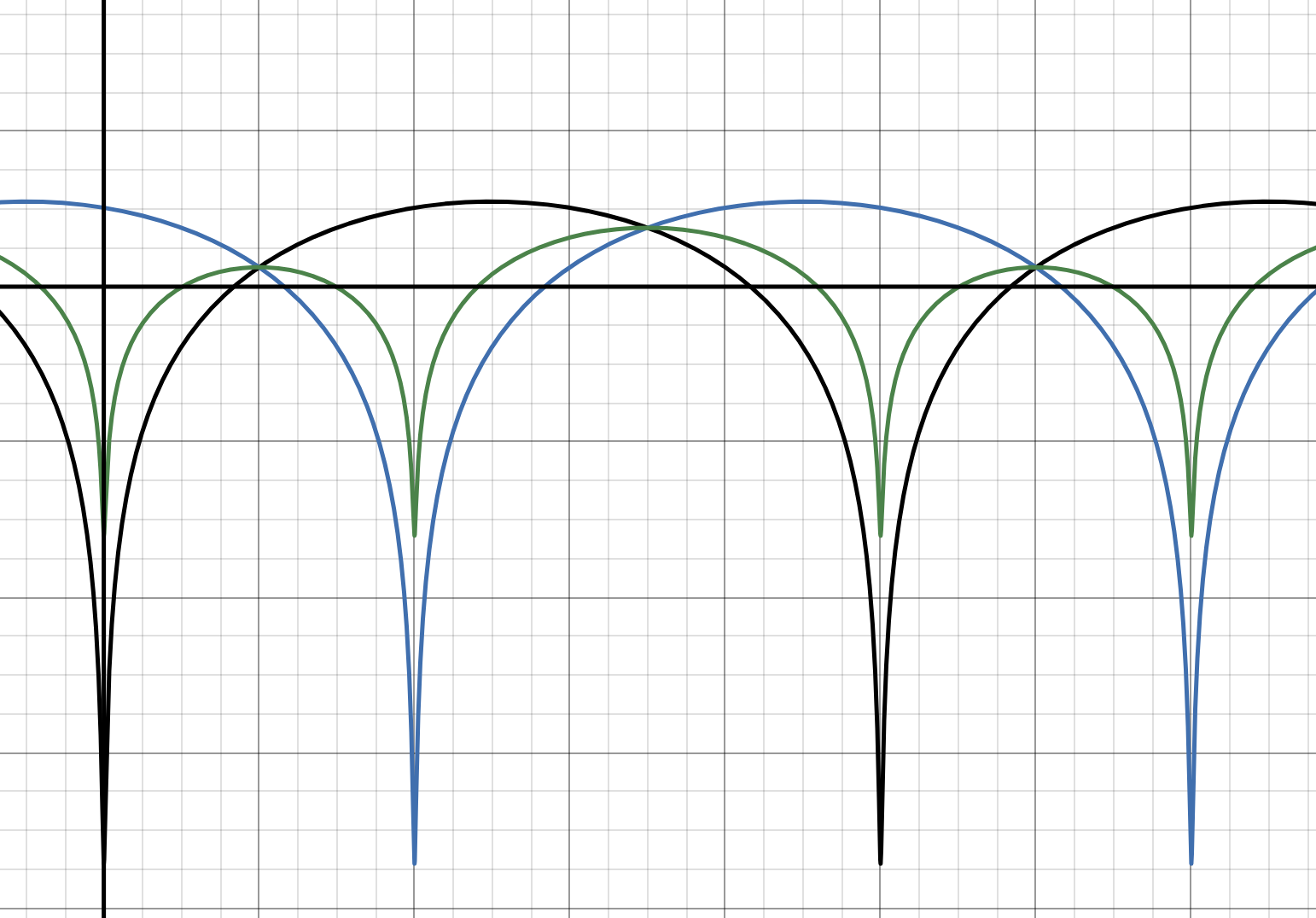}

    \caption{The $G_{r,s}$ function (represented in green), always lies between the space drawn by the two $g_r$ functions that generate the $a_{r,s}$ and $b_{r,s}$}
\end{figure}
\begin{proof}[Proof of Proposition \ref{maxmin}]
\textbf{Maxima Result}:
Now as in the $g_r$ lemma set $t_0=2^{-5}$. For each fixed $y$ we immediately obtain that for $x\in \{B_{t_0/r}(-1/2)+1/r\mathds{Z}\}\cup\{B_{t_0/r}(s/r-1/2)+1/r\mathds{Z}\}$ both $x\mapsto a_{r,s}(x,y)$ and $x\mapsto b_{r,s}(x,y)$ will attain their respective maxima in this domain and in addition be uniformly close to $2\pi r y$. Because $s\leq 1/2 $, $x_{max}=(1+s)/2r$ will always be outside of these balls, yielding the maxima result. 
\\
\textbf{Minima Result}:
For each fixed $y$, the $x\mapsto a_{r,s}(x,y)$ and $x\mapsto b_{r,s}(x,y)$ functions attain their minimums inside the set $x\in \{B_{t_0/r}(-1/2)+1/r\mathds{Z}\}\cup\{B_{t_0/r}(s/r-1/2)+1/r\mathds{Z}\}$, which implies that $G_{r,s}(w)$ must attain its minimum here. Now note that:
\begin{equation*}
            \begin{aligned}
                &\log g_r ((s/2r-1/2-x)+s/r+iy+1/2)\\
                &=\frac{1}{2}\log((e^{2\pi r y})^2 - 2 e^{2\pi r y-3\pi r}\cos(2\pi r ((s/2r-1/2-x)+s/r+1/2)) +(e^{-3\pi r})^2 )& \\
                &=\log g_r ((s/2r-1/2+x)+iy+1/2)
            \end{aligned}
        \end{equation*}
Which has the consequence that there is a symmetry across the point $x = s/2r-1/2$, and hence that:
$$
    \mathrm{Im}(G_{r,s}((s/2r-1/2)+x+iy)) = \mathrm{Im}(G_{r,s}((s/2r-1/2)-x+iy)) 
$$
This implies that for each $y$ the value of $\mathrm{Im}(G_{r,s})$ at $x=s/r-1/2$ are the same as at $x=-1/2$, so proving that $x=-1/2$ gives a minima up to a constant is enough to say the same for $x=s/r-1/2$. Note that from the explicit form ofthe $g_r$ functions, $x\mapsto a_{r,s}(x,y)$ and $x\mapsto b_{r,s}(x,y)$ will be strictly decreasing on $-1/2-t_0/r\leq x\leq-1/2$. In fact $x=-1/2$ will be a turning point for $a_{r,s}(x,y)$, which will increase up until the point of symmetry at $x=s/2r-1/2$ (where $a_{r,s}(x,y)=b_{r,s}(x,y)$). This implies that the value of $\mathrm{Im}G_{r,s}(w)$ at $x=-1/2$ will give an upper bound on the minimum. We now require a slightly finer estimate on $G_{r,s}(x+iy)$:

\begin{equation*}
    \begin{aligned}
     \mathrm{Im}G_{r,s}(iy-1/2)&\geq \min_{x\in\mathds{R}}\mathrm{Im}G_{r,s}(x+iy)  \\ & \geq\frac{1}{2} \min_{x\in[-1/2,s/2r-1/2]} (a_{r,s}(x,y))+\frac{1}{2} \min_{x\in[-1/2,s/2r-1/2]} (b_{r,s}(x,y)).   
    \end{aligned}
\end{equation*}
This yields a lower bound, so combining the two we obtain:
\begin{equation*}
    \begin{aligned}
        \frac{i}{4\pi}(\log g_r (iy) + \log g_r(s/r +iy))&\geq\min_{x\in\mathds{R}}\mathrm{Im}(G_{r,s}(x+iy)) 
        \geq\frac{i}{4\pi}(\log g_r (iy) + \log g_r(s/2r +iy))
    \end{aligned}
\end{equation*}
To conclude we now claim $|\log g_r(s/2r +iy) - \log g_r(s/r +iy)|$ is bounded above by a constant for all $y,r$ and $s\leq 1/2$ which proves that $x\in\{-1/2,s/r-1/2\}$ are both uniformly close to the minimum of the function $x\mapsto G_{r,s}(w)$. But from the explicit form of $g_r$:
$$
\frac{(g_r(-s/r+iy))^2}{(g_r(-s/2r+iy))^2}= \frac{(e^{2\pi r y})^2 - 2 e^{2\pi r y-3\pi r}\cos(2\pi s) +(e^{-3\pi r})^2}{(e^{2\pi r y})^2 - 2 e^{2\pi r y-3\pi r}\cos(\pi s) +(e^{-3\pi r})^2}
$$
and since all terms are non-zero and the $(e^{2\pi r y})^2$ part will dominate as $r$ tends to $0$, we obtain that there is a constant $C>0$ such that:
$$
C^{-1}\leq|\frac{(g_r(-s/r+iy))^2}{(g_r(-s/2r+iy))^2}|\leq C
$$
which gives the result that $|\log g_r(s/2r +iy) - \log g_r(s/r +iy)|$ is bounded above by a constant, concluding the argument.
\end{proof}
Using these $G_{r,s}(w)$ functions we now explicitly define a new function, $Y_{r,s}$. This will take values within a uniform constant of $G_{r,s}(w)-G(0)$, while satisfying additional important \textbf{functional relations}. To do this, we introduce the preliminary function:
    \[ \tilde{Y}^{\text{pre}_1}_{r,s}(x+iy) := \begin{cases} 
          G_{r,s}(x+iy) &   \mathrm{Im}(G_{r,s}(-1+iy))\leq\mathrm{Im}(G_{r,s}(x+iy))\\
          G_{r,s}(-1+i\mathrm{Im}(w))
          & \mathrm{Im}(G_{r,s}(x+iy))\leq\mathrm{Im}(G_{r,s}(-1+iy))
       \end{cases}
    \]
Now when $s\geq r$ define:
\[ \tilde{Y}^{\text{pre}_2}_{r,s}(x+iy) := \begin{cases} 
          G_{r,s}(x+iy-s/r)+s &   x\in(s/r-1,s/r-1/2]+1/r\mathds{Z}\\
          G_{r,s}(x+iy) &   x\in[s/r-1/2,s/r)+1/r\mathds{Z}\\
          G_{r,s}(x+iy) & x\in (1/r-1,1/r-1/2] + 1/r\mathds{Z} \\
          G_{r,s}(x+iy+s/r)-s & x \in [1/r-1/2,1/r) +1/r\mathds{Z} \\
          \tilde{Y}^{\text{pre}_1}_{r,s}(x+iy) & \text{otherwise},
       \end{cases}
    \]
while when $0\leq s<r$ define:
\begin{equation*}
    \begin{aligned}
        &\tilde{Y}^{\text{pre}_2}_{r,s}(x+iy) := \\
        & \begin{cases} 
           G_{r,s}(x+iy-1)+r &   x\in(0,s/2r]+1/r\mathds{Z} \\
           G_{r,s}(x+iy) & x\in [s/2r,s) + 1/r\mathds{Z}\\
           G_{r,s}(x+iy) & x\in (1/r-1,1/r-1+s/2r] +1/r\mathds{Z} \\
           G_{r,s}(x+iy+1)-r & x\in (1/r-1+s/2r,1/r+s/r-1] + 1/r\mathds{Z} \\
          G_{r,s}(x+iy +s/r-1) - (s-r) & x\in (1/r+s/r-1,1/r+s/2r -1/2]+1/r\mathds{Z} \\
          G_{r,s}(x+iy+s/r)-s & x\in (1/r+s/2r -1/2,1/r]+1/r\mathds{Z} \\
          \tilde{Y}^{\text{pre}_1}_{r,s}(x+iy) & \text{otherwise}.
       \end{cases}        
    \end{aligned}
\end{equation*}

\begin{definition}
    Finally, for all $(r,s) \in (0,1/2)\times[0,1/2]$ 
$$ 
Y_{r,s}(w) := \tilde{Y}^{\text{pre}_2}_{r,s}(x+iy) - \tilde{Y}^{\text{pre}_2}_{r,s}(0)
$$
\end{definition}

\begin{lemma}
    $Y_{r,s}(w)$ is continuous and well defined. It is analytic in $y$, and piece-wise analytic in $x$. 
\end{lemma}
\begin{proof}
The $G_r(x+iy)$ functions are clearly separately analytic in each variable. Continuity follows from the fact that we have chosen the map such that each of the piece-wise areas of definition will match up on their respective boundaries. Thus the result follows. However it will not be analytic, or even differentiable in general, on the boundary of these pieces. 
\end{proof}
 \begin{corollary}
 Given $Y_{r,s}$ is continuous, it is an easy argument to conclude by the contractivity of $G_{r,s}$ that it is also contracting in the sense desired, with contraction factor $0.9$.
\end{corollary}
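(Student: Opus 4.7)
The plan is to reduce the global contractivity of $Y_{r,s}$ to the already established contractivity of $G_{r,s}$ (item (3) of the previous proposition) by exhibiting $Y_{r,s}$ as a continuous, finite piecewise gluing of $0.9$-Lipschitz functions and then invoking the standard path-integration argument for such gluings.

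First I would check that each individual piece appearing in the definitions of $\tilde Y^{\text{pre}_1}_{r,s}$ and $\tilde Y^{\text{pre}_2}_{r,s}$ is $0.9$-Lipschitz on its closed piece-domain. Any function of the form $w\mapsto G_{r,s}(w+c)+d$ trivially inherits the bound $0.9$ from $G_{r,s}$, since translations preserve distances and additive constants have no effect. The one non-translate piece is the second branch of $\tilde Y^{\text{pre}_1}_{r,s}$, namely $w\mapsto G_{r,s}(-1+i\,\mathrm{Im}(w))$, which depends only on $y=\mathrm{Im}(w)$; for this one,
$$
|G_{r,s}(-1+iy_1)-G_{r,s}(-1+iy_2)|\le 0.9\,|y_1-y_2|\le 0.9\,|w_1-w_2|,
$$
again by the contractivity of $G_{r,s}$. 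Subtracting the final constant $\tilde Y^{\text{pre}_2}_{r,s}(0)$ of course leaves all Lipschitz constants unchanged.

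Next I would invoke the standard fact that a continuous map which is $L$-Lipschitz on each piece of a locally finite decomposition of its domain into pieces with real-analytic (in particular rectifiable) boundaries is globally $L$-Lipschitz. Given $w_1,w_2\in\overline{\mathds{H}'}$, the straight segment $[w_1,w_2]$ meets the piece boundaries, which are either vertical lines $x=a+k/r$ or the real-analytic curves $\mathrm{Im}(G_{r,s}(x+iy))=\mathrm{Im}(G_{r,s}(-1+iy))$, in only finitely many points $w_1=p_0,p_1,\dots,p_k=w_2$; each sub-segment $[p_{j-1},p_j]$ lies in the closure of a single piece, on which $Y_{r,s}$ continuously extends to a function known to be $0.9$-Lipschitz by the previous paragraph. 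Using continuity of $Y_{r,s}$ at the boundary points $p_j$ to match the one-sided limits, telescoping via the triangle inequality gives
$$
|Y_{r,s}(w_1)-Y_{r,s}(w_2)|\le\sum_{j=1}^k |Y_{r,s}(p_{j-1})-Y_{r,s}(p_j)|\le \sum_{j=1}^k 0.9\,|p_{j-1}-p_j|=0.9\,|w_1-w_2|,
$$
which is the desired bound.

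The only point requiring any actual verification, rather than being automatic, is the finiteness of the intersection of $[w_1,w_2]$ with the curve $\{\mathrm{Im}(G_{r,s}(x+iy))=\mathrm{Im}(G_{r,s}(-1+iy))\}$; but this follows from real-analyticity of the defining function on $\mathds{H}'$, noting that if it were identically zero on the segment then the segment would lie entirely in one closed piece and the bound would hold immediately from that piece's Lipschitz constant. Everything else is bookkeeping to keep track of the many cases in the definition of $\tilde Y^{\text{pre}_2}_{r,s}$, which is exactly why the author can reasonably call this an easy corollary.
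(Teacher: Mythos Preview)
Your argument is correct and is essentially the same approach the paper takes: the paper's proof simply says that on each piece where the definition does not change $Y_{r,s}$ inherits the $0.9$-Lipschitz bound from $G_{r,s}$, and then one ``matches up these sectors by continuity.'' You have spelled out this matching step via the standard segment-subdivision/telescoping argument and have been more careful than the paper in two respects: you explicitly treat the non-translate branch $w\mapsto G_{r,s}(-1+i\,\mathrm{Im}\,w)$ coming from $\tilde Y^{\text{pre}_1}_{r,s}$, and you address finiteness of the subdivision when crossing the curved (not just vertical) piece boundary. The paper's one-line proof glosses over both of these points, so your version is a strict elaboration rather than a different route.
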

\begin{proof}
    Due to the previous lemma, we know on each vertical strip where the definition of the function does not change $Y_{r,s}(w)$ is contacting. This argument is finished by matching up these sectors by continuity. 
\end{proof}
\begin{lemma}
    \label{close}
    Despite us defining $\tilde{Y}^{\text{pre}_2}$ to be rather different to $G_{r,s}(x+iy)$ (and indeed sometimes constant with respect to $x$) the image of points is nonetheless always close to the image under $G_{r,s}(x+iy)$. In particular, there exists a $C_{\ref{close}}>0$ such that for all $w\in\mathds{H}'$:
    $$
    |\tilde{Y}^{\text{pre}_2}(w) - G_{r,s}(w)|\leq C_{\ref{close}}
    $$
\end{lemma}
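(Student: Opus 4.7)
The plan is to analyse $E(w) := \tilde{Y}^{\text{pre}_2}_{r,s}(w) - G_{r,s}(w)$ piecewise according to the definition of $\tilde{Y}^{\text{pre}_2}_{r,s}$. First I would observe that both $\tilde{Y}^{\text{pre}_2}_{r,s}$ and $G_{r,s}$ satisfy the relation $F(w + 1/r) = F(w) + 1$: for $G_{r,s}$ this is item 2 of the preceding proposition, and for $\tilde{Y}^{\text{pre}_2}_{r,s}$ it is built into the case definitions, where each $1/r$-shift in $x$ maps one case to the same case with the compensating constant increased by $1$. Hence $E$ is $1/r$-periodic in the real direction and it suffices to bound $E$ on $\mathrm{Re}(w) \in [0, 1/r)$.

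On this strip each piece of $\tilde{Y}^{\text{pre}_2}_{r,s}$ has one of three forms: (i) $G_{r,s}(w)$ itself, giving $E \equiv 0$; (ii) a shifted copy $G_{r,s}(w + \tau) + c$ with $\tau \in \{\pm s/r,\, \pm 1,\, s/r - 1\}$ and matching compensating constant $c$; or (iii) the flooring $\tilde{Y}^{\text{pre}_1}_{r,s}(w) = G_{r,s}(-1 + i\,\mathrm{Im}(w))$, invoked precisely when $\mathrm{Im}(G_{r,s}(x+iy)) \leq \mathrm{Im}(G_{r,s}(-1+iy))$. For (ii) I would substitute the explicit formula for $G_{r,s}$ and use integer periodicity to absorb the integer part of $\tau$; a direct bookkeeping shows that two of the four $\log g_r$ terms cancel identically, leaving a residue of the form $\tfrac{1}{4\pi}\bigl|\log g_r(A + iy) - \log g_r(B + iy)\bigr|$. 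The intervals in the piecewise definition are arranged precisely so that the surviving arguments $A, B$ always lie in the ``good'' range of Lemma \ref{grlem}, in which $g_r(\cdot + iy)/e^{2\pi ry}$ is pinned between two uniform constants; the residue is therefore uniformly bounded.

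For (iii), the real part contribution is $r|x + 1| \leq 1 + r$ on $x \in [0, 1/r)$, uniformly bounded. The imaginary part contribution is $\mathrm{Im}(G_{r,s}(-1+iy)) - \mathrm{Im}(G_{r,s}(x+iy)) \geq 0$, which by Proposition \ref{maxmin} is at most $\mathrm{Im}(G_{r,s}(-1+iy)) - \mathrm{Im}(G_{r,s}(\tilde x + iy)) + C_{\ref{maxmin}}$ for $\tilde x \in \{0, s/r\}$ (one of the approximate minima). Using the symmetry $g_r(-u + iy) = g_r(u + iy)$ inherent in the formula $g_r^2 = e^{4\pi ry} + e^{-6\pi r} - 2e^{2\pi ry - 3\pi r}\cos(2\pi rx)$, the two $\log g_r$ terms at the minima cancel exactly and what survives is a single log-ratio that can be estimated directly from the explicit $g_r$-formula. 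The role of the $s \geq r$ versus $s < r$ split in the definition of $\tilde{Y}^{\text{pre}_2}_{r,s}$ is to pre-empt the flooring on those intervals where this log-ratio would otherwise blow up in $r$; in the $s < r$ regime the additional cases insert type-(ii) shifts on the problematic neighbourhoods of the minima.

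The main obstacle is controlling case (iii) uniformly: the two $\log g_r$ values that remain after the symmetric cancellation are individually unbounded as their arguments approach the minima of $g_r$, yet their difference must be shown to stay uniformly bounded in $r, s,$ and $y \geq -1$. This is where the careful choice of which intervals are handled by flooring and which by type-(ii) shifts is essential, and Lemma \ref{grlem} together with the $g_r$-symmetry supplies the needed uniform control.
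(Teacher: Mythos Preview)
Your piecewise decomposition is sensible, but the key tool you invoke for the type-(ii) pieces is the wrong one. After the cancellation you describe, the residue is indeed $\tfrac{1}{4\pi}|\log g_r(A+iy)-\log g_r(B+iy)|$, but your claim that $A,B$ always fall in the ``good range'' of Lemma~\ref{grlem} is not uniformly true: that lemma requires the real part to be at distance $\ge t/r$ from the minima of $g_r$, whereas a direct check (e.g.\ the case $s\ge r$, $x\in(s/r-1,s/r-1/2]$, shift $\tau=-s/r$) only gives distance $\ge 1/2$ from the minima, which is far weaker when $r$ is small. So the $g_r$-lemma does not close this case.

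The paper bypasses all of this with a single observation: every type-(ii) modification lives on an interval of real width at most $1$, and $G_{r,s}$ is $0.9$-Lipschitz (the contractivity item of the preceding proposition). Together with the continuity of $\tilde Y^{\text{pre}_2}_{r,s}$ established just before, this immediately bounds $|\tilde Y^{\text{pre}_2}_{r,s}-\tilde Y^{\text{pre}_1}_{r,s}|$ by a uniform constant, with no case analysis of $g_r$-arguments at all. For the flooring step the paper again uses contractivity: $\mathrm{Im}\,G_{r,s}(-1+iy)$ is within $0.9\cdot\tfrac12$ of $\mathrm{Im}\,G_{r,s}(-1/2+iy)$, which by Proposition~\ref{maxmin} is within $C_{\ref{maxmin}}$ of $\inf_x \mathrm{Im}\,G_{r,s}(x+iy)$; hence the floor sits just above the true infimum and can only alter the imaginary part by that fixed gap. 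Your type-(iii) analysis is heading toward the same conclusion via Proposition~\ref{maxmin}, but the ``single log-ratio'' you isolate is again most cleanly bounded by contractivity rather than by direct $g_r$-estimates. In short: replace each appeal to Lemma~\ref{grlem} by the Lipschitz bound on $G_{r,s}$, and both (ii) and (iii) collapse to one-line estimates.
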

\begin{proof}
    There are a couple of key ingredients here - firstly the adjustments made in $\tilde{Y}^{\text{pre}_2}$ are very local, therefore by the contractivity property of $G_{r,s}$ they cannot be far from $\tilde{Y}^{\text{pre}_1}$. It remains to prove that $\tilde{Y}^{\text{pre}_1}$ is not far from $G_{r,s}$. But this follows entirely from the fact that $G_{r,s}(-1+iy)$ is uniformly close $G_{r,s}(-1/2+iy)$ by contractivity, which then by the minima argument is uniformly close to the minimum of $G_{r,s}$. Since the primary adjustment at this stage is to points with imaginary height below $G_{r,s}(-1+iy)$ we can conclude that $G_{r,s}$ and $\tilde{Y}^{\text{pre}_2}$ are uniformly close. Finally, by contractivity it is clear that $\mathrm{Im}(G_{r,s}(iy))$ will be uniformly close to $\mathrm{Im}(G_{r,s}(-1+iy)) = \mathrm{Im}(\tilde{Y}_{r,s}^{\text{pre}_2}(0))$. Therefore we can conclude that both are uniformly close by some constant $C_{\ref{close}}$>0.
\end{proof}
\begin{corollary}
\label{maxminy}
    The maxima and minima statement is inherited from the statement about $G_{r,s}$ due to lemma 3.12. In particular for the functions $x\mapsto \mathrm{Im}(G_{r,s}(x+iy))$ for each $y\geq-1$ and $x_{max}=(1+s)/2r-1/2$ for all $s\leq1/2$, $r$, and $y$ there exists a constant $C_{\ref{maxminy}}>0$ such that:
$$
|\mathrm{Im}(Y_{r,s}(x_{max}+iy)) - \sup_{x\in\mathds{R}}\mathrm{Im}(Y_{r,s}(x+iy))| \leq C_{\ref{maxminy}}
$$
which also is such that for $\tilde{x}\in\{0,s/r\}$ and for all $y,r,s$:
$$
|\mathrm{Im}(Y_{r,s}(\tilde{x}+iy)) - \inf_{x\in\mathds{R}}\mathrm{Im}(Y_{r,s}(x+iy))| \leq C_{\ref{maxminy}}
$$
\end{corollary}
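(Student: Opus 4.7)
The plan is to reduce the statement for $Y_{r,s}$ to the already-established statement for $G_{r,s}$ in Proposition \ref{maxmin} by exploiting the uniform closeness guaranteed by Lemma \ref{close}. The proof is essentially a triangle inequality argument, so the main care needed is uniform bookkeeping of the error constants across the parameters $r$, $s$, and $y \geq -1$.

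First I would use the definition $Y_{r,s}(w) = \tilde{Y}^{\text{pre}_2}_{r,s}(w) - \tilde{Y}^{\text{pre}_2}_{r,s}(0)$ together with Lemma \ref{close} applied at both $w$ and at the origin to deduce the uniform bound
$$|Y_{r,s}(w) - (G_{r,s}(w) - G_{r,s}(0))| \leq 2C_{\ref{close}}$$
for every $w \in \mathds{H}'$. Taking imaginary parts, the function $f(x) := \mathrm{Im}(Y_{r,s}(x+iy))$ differs from $g(x) := \mathrm{Im}(G_{r,s}(x+iy)) - \mathrm{Im}(G_{r,s}(0))$ by at most $2C_{\ref{close}}$, uniformly in $x$, $y$, $r$, and $s$.

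Next I would invoke the elementary fact that if two real-valued bounded functions satisfy $\|f-g\|_\infty \leq K$, then $|\sup f - \sup g| \leq K$ and $|\inf f - \inf g| \leq K$. Combining this with the previous step, a single triangle inequality at $x_{max} = (1+s)/(2r) - 1/2$ yields
\begin{align*}
|f(x_{max}) - \sup_x f(x)| &\leq |f(x_{max}) - g(x_{max})| + |g(x_{max}) - \sup_x g(x)| + |\sup_x g(x) - \sup_x f(x)| \\
&\leq 4C_{\ref{close}} + |g(x_{max}) - \sup_x g(x)|.
\end{align*}
The last term is exactly the quantity controlled by Proposition \ref{maxmin} (the constant shift by $\mathrm{Im}(G_{r,s}(0))$ cancels out of the gap between the value and the supremum), so it is bounded by $C_{\ref{maxmin}}$. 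Setting $C_{\ref{maxminy}} := C_{\ref{maxmin}} + 4C_{\ref{close}}$ gives the maximum statement, and the identical argument with $\tilde{x} \in \{0, s/r\}$ and $\inf$ in place of $\sup$ gives the minimum statement.

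There is no real obstacle here; the reason this deserves a separate statement is that $Y_{r,s}$, and not $G_{r,s}$, is the object used in the rest of the construction. The only thing to double-check is that $C_{\ref{close}}$ and $C_{\ref{maxmin}}$ are genuinely uniform over the full parameter range $(r,s) \in (0,1/2] \times [0,1/2]$ and $y \geq -1$, which is already guaranteed by the statements of the two earlier results.
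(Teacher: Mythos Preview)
Your proposal is correct and follows the same approach as the paper: both reduce the $Y_{r,s}$ statement to the $G_{r,s}$ statement (Proposition~\ref{maxmin}) via the uniform closeness of Lemma~\ref{close} and a triangle inequality. The paper's one-line proof records the constant as $C_{\ref{maxmin}}+C_{\ref{close}}$, while your more careful bookkeeping yields $C_{\ref{maxmin}}+4C_{\ref{close}}$; the discrepancy is irrelevant and in fact you could shave yours to $C_{\ref{maxmin}}+2C_{\ref{close}}$ by observing that the constant shift $\tilde{Y}^{\text{pre}_2}_{r,s}(0)$ in the definition of $Y_{r,s}$ cancels in the gap $f(x_{max})-\sup f$, so there is no need to route through $G_{r,s}(0)$.
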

\begin{proof}
    This is clearly true with $C_{\ref{maxminy}} = C_{\ref{maxmin}}+C_{\ref{close}}$ due to the fact that $Y_{r,s}$ and $G_{r,s}$ are $C_{\ref{close}}$ close.
\end{proof}
\begin{proposition}[Fundamental Conditions for a Renormalisation Scheme]
    For each $(r,s)\in(0,1/2]\times[0,1/2]$ $Y_{r,s}:\mathds{H}' \mapsto \mathds{H}'$ preserves vertical lines, and has the following properties:
    \begin{enumerate}
        \item The map $Y_{r,s}$ is injective on $\overline{\mathds{H}'}$ and $Y_{r,s}(\overline{\mathds{H}'})\subset\mathds{H}'$
        \item (\textbf{F1})For every $w\in\overline{\mathds{H}'}$,
        $$
        Y_{r,s}(w+1/r) = Y_{r,s}(w)+1
        $$
        \item (\textbf{F2}) For every $t\geq -1$,
        $$
        Y_{r,s}(it+1/r-1)=Y_{r,s}(it) + 1 - r
        $$
        \item (\textbf{F3}) And furthermore there is a symmetry about the two critical points:
        $$
        Y_{r,s}(s/r-w) -s= Y_{r,s}(w) 
        $$
        \item For all $w_1,w_2 \in \overline{\mathds{H}'}$,
        $$
        |Y_{r,s}(w_1)-Y_{r,s}(w_2) | \leq 0.9 |w_1 - w_2|
        $$
    \end{enumerate}
\end{proposition}
\begin{proof}
The remaining unproven points are the three functional relations hold, and proving that $Y_{r,s}(x+iy)$ depends continuously on $r$ and $s$.
\\ \textbf{Step 1: The functional relations}:
\\ Because $Y_{r,s}$ has been defined on $[0,1/r]$  and extended by translations of $1/r$, it is immediate that the translation property of (F1) holds. The second function relation may be proved by only looking on $[0,1/r]$, but this actually follows from definition, as the piecewise definition given for $\tilde{Y}_{r,s}^{\text{pre}_2}$ ensures ensures that vertical heights at $x=1/r-1$ are the same as at $x=1/r$. The piece-wise definition is also carefully chosen so that there is a symmetry across $s/2r-1/2$ is inherited in the $Y_{r,s}$, therefore we can also obtain (F3).
        \\\textbf{Step 2: Continuous dependence on $r$ and $s$}:
        \\ 
    This is clear for $G_{r,s}(x+iy)$, to conclude we note that the piece-wise definition depends continuously on $r$ and $s$ too, as $Y_{r,s}$ is equivalent to a translation of $G_{r,s}(x+iy)$ on each piece-wise slice.
\end{proof}

\begin{proposition}[Bi-Critical Geometry]
\label{height}
    Define $\mathcal{M}(r,s) := \frac{1}{2}\log(1/r)+\frac{1}{2}\log(1/(s+r))$. Then there exists a constant $C_{\ref{height}}$ such that on $s\in[0,1/2]$ for all $x\in[(1+s)/2r-1,(1+s)/2r]$ we have:
    \begin{equation*}
                \begin{aligned}
                    2\pi r y + \mathcal{M}(r,s) - C_{\ref{height}}  \leq 2\pi\mathrm{Im}(Y_{r,s}(x+iy)) \leq2\pi r y + \mathcal{M}(r,s) + C_{\ref{height}}
                \end{aligned}
            \end{equation*}
    Furthermore, due to the prior maxima result, it is clear that we can replace ``$\mathrm{Im}(Y_{r,s}(x+iy))$ on $x\in[(1+s)/2r-1,(1+s)/2r]$" with $\sup_{x\in\mathds{R}}\mathrm{Im}(Y_{r,s}(x+iy))$.
\end{proposition}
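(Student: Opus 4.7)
The plan is to transport the estimate to the explicit geometry of $G_{r,s}$, use the symmetry at $x_{\max}=(1+s)/(2r)-1/2$ to simplify $G_{r,s}(x_{\max}+iy)$, and extract the $\mathcal{M}(r,s)$ dependence from $G_{r,s}(0)$. First, Corollary \ref{maxminy} places $\mathrm{Im}(Y_{r,s}(x_{\max}+iy))$ within an absolute constant of $\sup_{x}\mathrm{Im}(Y_{r,s}(x+iy))$; the $0.9$-Lipschitz bound (item (5) of Proposition 3.3) then makes the value of $\mathrm{Im}(Y_{r,s})$ constant, up to an absolute constant, across the interval of length $1$ specified in the statement. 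So it suffices to estimate $\mathrm{Im}(Y_{r,s}(x_{\max}+iy))$ within a uniform additive error. Next, Lemma \ref{close} together with the normalisation $Y_{r,s}(w) = \tilde{Y}^{\text{pre}_2}_{r,s}(w) - \tilde{Y}^{\text{pre}_2}_{r,s}(0)$ reduces the problem to estimating $\mathrm{Im}(G_{r,s}(x_{\max}+iy)) - \mathrm{Im}(G_{r,s}(0))$.

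For the first term, I use the symmetry observed in the proof of Proposition \ref{maxmin}: at $x_{\max}$ the two summands defining $G_{r,s}$ coincide, since $\cos(2\pi r(1\pm s)/(2r)) = -\cos(\pi s)$ forces both $g_r((1+s)/(2r)+iy)$ and $g_r((1-s)/(2r)+iy)$ to have equal modulus. Because $(1+s)/(2r)\bmod 1/r$ lies in $[1/(2r),3/(4r)]$, uniformly bounded away from the minima of $g_r$, Lemma \ref{grlem} applied with $t=1/4$ yields $\log g_r((1+s)/(2r)+iy) = 2\pi r y + O(1)$ for every $y\geq -1$, contributing the linear-in-$y$ piece of the estimate.

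The substantive remaining calculation is $\mathrm{Im}(G_{r,s}(0))$, which must produce the $\mathcal{M}(r,s)$ contribution. From the explicit form $g_r(x+iy)^2 = e^{4\pi r y} - 2e^{2\pi r y - 3\pi r}\cos(2\pi r x) + e^{-6\pi r}$ one reads off $g_r(1/2)\asymp r$ directly by Taylor expansion. The bicritical input is the companion uniform estimate $g_r(-s/r+1/2)\asymp s+r$ on $s\in[0,1/2]$ and small $r$. Expanding $g_r(-s/r+1/2)^2 = e^{-6\pi r} + 1 - 2e^{-3\pi r}\cos(2\pi s - \pi r)$ to leading order yields the quadratic form $2\pi^2(2s^2-2sr+5r^2)$; the algebraic identity $2s^2-2sr+5r^2 - (s+r)^2 = (s-2r)^2 \geq 0$ delivers the lower bound, while the upper bound $\leq 5(s+r)^2$ is immediate from $(s+r)^2 \geq s^2+r^2$. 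For $s$ bounded away from $0$ the cleaner $g_r(-s/r+1/2)\asymp 2\sin(\pi s)\asymp s \asymp s+r$ handles the remaining regime. Combining these gives $\mathrm{Im}(G_{r,s}(0)) = (\log r + \log(s+r))/(4\pi) + O(1)$, and subtracting from the earlier linear-in-$y$ estimate produces the claimed two-sided bound.

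The main obstacle is the uniform two-sided estimate $g_r(-s/r+1/2)\asymp s+r$, the genuinely bicritical quantity not appearing in the unicritical analogue of \cite{cheraghi2022arithmetic}. The algebraic identity $(s-2r)^2\geq 0$ encodes why the two critical points split along the combined scale $s+r$ rather than separately along $s$ or $r$, and it is this inequality that forces the specific form $\mathcal{M}(r,s)=\tfrac{1}{2}\log(1/r)+\tfrac{1}{2}\log(1/(s+r))$.
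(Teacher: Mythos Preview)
Your proof is correct and follows essentially the same route as the paper: reduce to $G_{r,s}-G_{r,s}(0)$ via Lemma~\ref{close} and contractivity, use the $g_r$ lemma to linearise the value at $x_{\max}$, and read off $\mathcal{M}(r,s)$ from $\mathrm{Im}\,G_{r,s}(0)$. You in fact supply more than the paper does at the key step: the paper simply asserts $g_r(s/r+1/2)\asymp_{D} (s+r)$, whereas your quadratic expansion $g_r^2\sim 2\pi^2(2s^2-2sr+5r^2)$ together with the identity $(s-2r)^2\ge 0$ gives an explicit two--sided comparison with $(s+r)^2$ in the small regime, and your $2\sin(\pi s)$ observation covers $s$ bounded away from $0$; only the routine compactness case $r$ bounded away from $0$ is left implicit.
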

\begin{proof}
    All that there is to do is calculate $\mathrm{Im}(Y_{r,s}((1+s)/2r+iy))$ and then we can conclude by contractivity. It is clear that $\mathrm{Im}(Y_{r,s}((1+s)/2r+iy)) \sim_{C_{\ref{close}}} \mathrm{Im}(G_{r,s}((1+s)/2r+iy)) - \mathrm{Im}(G_{r,s}(0))$, and by the $g_r$ lemma the point $(1+s)/2r+iy$ lies upon the range where both $\log g_r(x+iy+1/2)$ and $\log g_r(x+iy+s/r+1/2)$ are uniformly close to $2\pi r y$. Therefore we just need to compute
    $\mathrm{Im}(G_{r,s}(0)) = \log g_r(1/2) + \log g_r(s/r+1/2)$. To achieve this we will prove that there exists a $D_{\ref{height}}>0$ $g_r(s/r+1/2) \asymp_{D_{\ref{height}}} (s+r)$. This yields:
    $$
    \frac{1}{4\pi}(\log g_r(1/2) + \log g_r(s/r+1/2))\sim_{C_{\ref{height}}} \frac{1}{4\pi}(\log r+\log(s+r))
    $$
    thus concluding the statement. To see that such a $D_{\ref{height}}$ exists, note that $g_r(s/r+1/2)=|e^{-3\pi r}-e^{-2\pi i s}|$, and for each fixed $r$, as $s\rightarrow 0$ $e^{-2\pi i s}\asymp 1-i2\pi s$, yielding $g_r(s/r+1/2)\asymp | 1-3\pi r + 1 + 2\pi s i| \asymp r+s$.
\end{proof}

This proposition essentially describes the sizes of the "close approaches" in the model. To see why this correlates with actual bi-critical holomorphic maps, see chapter 6.

\subsection{The Space $\mathds{M}_{[\alpha],[\beta]}$}

Now recall the sequence $\{\alpha_n,\beta_n\}_{n=0}^{\infty}$ defined in the previous section.
\begin{definition}
    Define:
    \[ \mathds{Y}_{n}(w) = \begin{cases} 
          Y_{\alpha_n,\beta_n}(w) & (\epsilon_n ,\delta_n) = (-1,-1)\\
          -s( Y_{\alpha_n,\beta_n}(w)) & (\epsilon_n ,\delta_n) = (1,1) \\
          Y_{\alpha_n,\beta_n}(w-\beta_n/\alpha_n)+\beta_n & (\epsilon_n ,\delta_n) = (-1,1) \\
          -s(Y_{\alpha_n,\beta_n}(w-\beta_n/\alpha_n)+\beta_n) & (\epsilon_n ,\delta_n) = (1,-1)
       \end{cases}
    \]
\end{definition}
So now each $\mathds{Y}_n$ is either orientation preserving or reversing, depending on $\epsilon_n$.
\begin{remark}
    The following hold:

    \begin{enumerate}
        \item $\mathds{Y}_n(i[-1,+\infty))\subset i(i,+\infty)$, $\mathds{Y}_n(0)=0$
        \item $\forall n\geq 0$ and all $w\in\overline{\mathds{H}'}$,
    
    \begin{equation*}
        \label{ff1}\mathds{Y}_n(w) = \begin{cases} 
          \mathds{Y}_n(w)+1 & \epsilon_n=-1\\
          \mathds{Y}_n(w)-1 & \epsilon_n=+1 \\
       \end{cases}
    \end{equation*}
    \item $\forall n\geq0$ and all $t\geq-1$,
    
    \begin{equation*}
        \label{ff2}\mathds{Y}_n(it+1/\alpha_n-1) = \begin{cases} 
          \mathds{Y}_n(it)+(1-\alpha_n) & \epsilon_n=-1\\
          \mathds{Y}_n(it)+(\alpha_n-1) & \epsilon_n=+1 \\
       \end{cases}
    \end{equation*}
    \item For all $w_1,w_2 \in \overline{\mathds{H}'}$,
        $$
        |\mathds{Y}_n(w_1)-\mathds{Y}_n(w_2) | \leq 0.9 |w_1 - w_2|
        $$
    \item $\forall n\geq0$ and all $y\geq -1 $:
    $$
    \mathds{Y}_n(\delta_n\beta_n/\alpha_n+it)=\mathds{Y}_n(it)-\epsilon\beta_n
    $$
    \end{enumerate}
\end{remark}
These all follow directly from applying the properties of $Y_{r,s}$. Now define:
\begin{equation*}
    \begin{aligned}
        &I^0_n=\{w \in \mathds{H}': \mathrm{Re}(w) \in [0,\frac{1}{\alpha_n}] \} \\
        &J^0_n = \{ w \in I^0_n: \mathrm{Re}(w) \in [\frac{1}{\alpha_n}-1,\frac{1}{\alpha_n}] \}, & K^0_n = \{ w \in I^0_n: \mathrm{Re}(w) \in [0,\frac{1}{\alpha_n}-1] \}
    \end{aligned}
\end{equation*}
Fix an arbitrary $n\geq0$ and if $\epsilon_{n+1}=-1$ let:
$$
I^{j+1}_n = \bigcup_{l=0}^{a_n-2} (\mathds{Y}_{n+1}(I^j_{n+1})+l) \bigcup (\mathds{Y}_{n+1}(K^j_{n+1})+a_n - 1) 
$$
while if $\epsilon_{n+1} = +1$:
$$
I^{j+1}_n = \bigcup_{l=1}^{a_n} (\mathds{Y}_{n+1}(I^j_{n+1})+l) \bigcup (\mathds{Y}_{n+1}(J^j_{n+1})+a_n + 1).
$$
Regardless of the sign define:
\begin{equation*}
\begin{aligned}
        J^{j+1}_n = \{ w \in I^{j+1}_n: \mathrm{Re}(w) \in [\frac{1}{\alpha_n}-1,\frac{1}{\alpha_n}] \} & & K^{j+1}_n = \{ w \in I^{j+1}_n: \mathrm{Re}(w) \in [0,\frac{1}{\alpha_n}-1] \}
    \end{aligned}
\end{equation*}
These will be closed and connected subsets of $\mathds{C}$, bounded by piece-wise analytic curves, and in addition $\{\mathrm{Re}(w)| w\in M^j_n\} = [0,1/\alpha_n]$.
\begin{figure}[H]
\captionsetup{justification=centering}
    \centering
\includegraphics[width=0.35\textwidth]{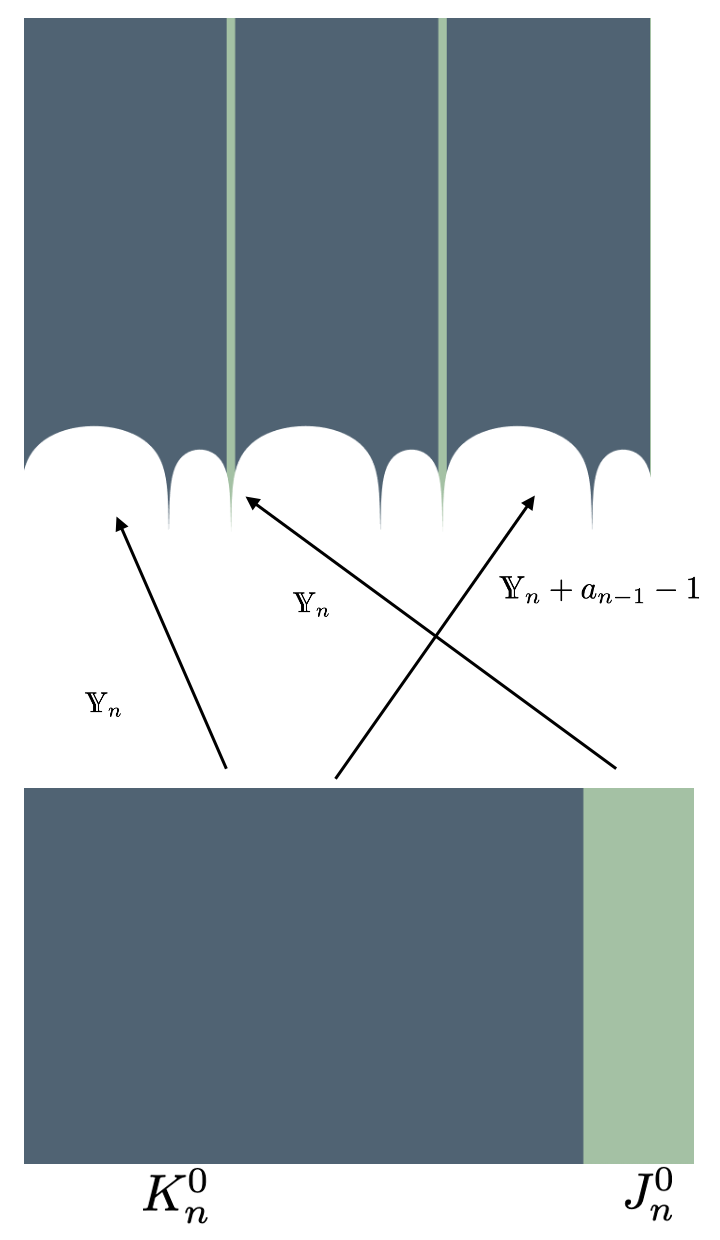}

    \caption{An image of the construction for $\epsilon_n=-1$. The set $I^1_{n-1}$ is pictured in the upper row.}
\end{figure}
\begin{corollary}
    For all $n,j\geq0$: 
    \begin{enumerate}
        \item for all $w\in\mathds{C}$ with $\mathrm{Re}(w)\in[0,1/\alpha_{n}-1],w\in I^j_n$ if and only if $w\in I^j_n$
        \item for all $t\in\mathds{R}$, $it\in I^j_n \iff it +1/\alpha_n\in I^j_n$
    \end{enumerate}
\end{corollary}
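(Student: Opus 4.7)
The plan is to prove both parts by induction on $j \geq 0$, using the functional relations (F1), (F2), (F3) of the maps $Y_n$ stated in the remark just before the corollary, combined with the recursive definition of the sets $I^j_n$, $J^j_n$, $K^j_n$. (I will interpret item (1) as the natural translation/periodicity statement in the horizontal direction that one expects; as stated in the excerpt the right-hand side appears to repeat the left-hand side, so I read it as asserting that membership in $I^j_n$ for $\mathrm{Re}(w)$ in the interior strip $[0, 1/\alpha_n - 1]$ is governed by the periodic structure of the construction.)

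For the base case $j = 0$, the sets are simply vertical strips: membership in $I^0_n$, $J^0_n$, $K^0_n$ is decided by $\mathrm{Re}(w)$ alone. Both properties are then immediate checks on the defining intervals; in particular $it \in I^0_n$ is automatic for every $t \in \mathds{R}$ since $\mathrm{Re}(0) = 0 \in [0, 1/\alpha_n]$, and the same for $it + 1/\alpha_n$.

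For the inductive step, assume the corollary holds at level $j$ for all $n$. The set $I^{j+1}_n$ is a disjoint union of horizontally shifted copies of $Y_{n+1}(I^j_{n+1})$ together with one truncated endpiece $Y_{n+1}(K^j_{n+1})$ or $Y_{n+1}(J^j_{n+1})$ (depending on the sign $\epsilon_{n+1}$). The key lever is relation (F1): $Y_{n+1}(w + 1/\alpha_{n+1}) = Y_{n+1}(w) \pm 1$. This means that translating the input of $Y_{n+1}$ by its horizontal period $1/\alpha_{n+1}$ produces a unit horizontal translation of the image, which is exactly the spacing between consecutive blocks $Y_{n+1}(I^j_{n+1}) + l$ in the defining union. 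Consequently, the interior pieces of $I^{j+1}_n$ tile periodically with period $1$ in the real direction, giving property (1). For property (2), the identification between the vertical slices at $\mathrm{Re}(w) = 0$ and $\mathrm{Re}(w) = 1/\alpha_n$ reduces, by tracing through the union, to the identification between the left edge of the first block $Y_{n+1}(I^j_{n+1})$ and the right edge of the last (possibly truncated) block; both edges are images under $Y_{n+1}$ of vertical lines that differ by $1/\alpha_{n+1}$, so (F1) transports the inductive vertical-axis periodicity at level $(n+1, j)$ to the vertical-axis periodicity at level $(n, j+1)$.

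The main obstacle, and the place that requires the most care, is the truncated endpiece at $l = a_n - 1$ or $l = a_n$. Since $1/\alpha_n = a_n + \epsilon_{n+1}\alpha_{n+1}$, the real-part excess of a full final block $Y_{n+1}(I^j_{n+1})$ over $1/\alpha_n$ has width $\alpha_{n+1}$, and this is precisely the amount removed by replacing $I^j_{n+1}$ with $K^j_{n+1}$ or $J^j_{n+1}$. The relation (F2), $Y_{n+1}(it + 1/\alpha_{n+1} - 1) = Y_{n+1}(it) \pm (1 - \alpha_{n+1})$, is tailored so that this truncation lands exactly at $\mathrm{Re}(w) = 1/\alpha_n$ and so that the boundary of the truncated block matches (as a vertical curve) the left boundary at $\mathrm{Re}(w) = 0$. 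Once this matching at the two endpoints of the strip is verified, property (2) follows by the inductive hypothesis applied to the sub-blocks, and property (1) follows because every point in the interior strip $[0, 1/\alpha_n - 1]$ lies inside a block $Y_{n+1}(I^j_{n+1}) + l$ for some $l$ with a neighbor at $l \pm 1$, and these blocks are exact horizontal translates of one another.
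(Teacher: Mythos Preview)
Your proposal is correct and follows essentially the same approach as the paper: the paper's own proof is a one-line reference to \cite{cheraghi2022arithmetic} stating that the result is a direct consequence of the functional relations (F1) and (F2), and your inductive argument is precisely the standard unpacking of that remark. Your reading of item~(1) as the period-$1$ translation statement ``$w\in I^j_n \iff w+1\in I^j_n$'' is the intended one (the printed statement is a typo), and you correctly note that (F3) plays no role here.
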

\begin{proof}
    Both statements clearly hold for $j=0$ and all $n\geq0$, so we proceed by an inductive argument on $j$. In particular equation \ref{ff1} combined with (ii) imply that $it\in I^{j+1}_n$ if and only if $it+1\in I^{j+1}_n$, so then by the definition of the $I^{j+1}_n$ the result follows.
    \par
    For part (ii) we need to separate $\epsilon_{n+1}=\pm1$. Fixing $\epsilon_{n+1}=-1$ note that for $t,t'\in[-1,+\infty)$ satisfying $\mathds{Y}_{n+1}(it')=it$, we have 
\[\mathds{Y}_{n+1}(it'+ 1/\alpha_{n+1} - 1)+a_n-1=
\mathds{Y}_{n+1}(it')+ (1-\alpha_{n+ 1}) + a_n - 1=
it + 1/\alpha_n .\] 
If $i t \in I_n^{j+1}$, then by definition of the sets there is $it' \in I_{n+1}^j$ with $\mathds{Y}_{n+1}(it')=it$. 
By the induction hypotheses $it'+ 1/\alpha_{n+1} - 1 \in K_{n+1}^j$, so 
$it+1/\alpha_n \in I_n^{j+1}$. On the other hand, if $it+1/\alpha_n \in I_n^{j+1}$, then by the induction hypotheses, 
there is $it'\in I_{n+1}^j$ such that $it'+1/\alpha_{n+1} -1 \in K_{n+1}^j$ and 
\[\mathds{Y}_{n+1}(it'+1/\alpha_{n+1} -1) + (a_n-1) = it+1/\alpha_n.\], which implies that 
$it+1/\alpha_n=\mathds{Y}_{n+1}(it')+1/\alpha_n$. Hence, $it= \mathds{Y}_{n+1}(it')$, which implies that $it\in I_n^{j+1}$. 

\par
The proof when $\epsilon_{n+1}=-1$ is similar. For more details about this proof see \cite{cheraghi2022arithmetic} as it is exactly analagous.
\end{proof}
We also note that it follows via a simple inductive argument that for all $n\geq -1$, $j\geq 0$, $I^{j+1}_n\subset I^{j}_n$, allowing us to define:
$$
I_n := \cap_{j\geq0}I^{j}_n
$$
Each $I_n$ will consist of closed half-infinite vertical lines, and it may or may not be connected. By the corollary $it\in I_{-1}$ if and only if $it+1\in I_{-1}$. Finally we may write:
$$
\mathds{M}_{[\alpha],[\beta]}=\{s(e^{2\pi i w})| w\in I_{-1}\} \cup \{ 0\}
$$
where we say $\mathds{A}_{[\alpha],[\beta]} := \partial \mathds{M}_{[\alpha],[\beta]}$, which will represent the model for the post-critical set.
\begin{proposition}
\label{+1beta}
    For every $([\alpha],[\beta]) \in \mathds{R}/\mathds{Q}/\mathds{Z}\times\mathds{R}/\mathds{Z}$, the following holds:
    \begin{itemize}
        \item $\mathds{M}_{[\alpha],[\beta]}$ is a compact set which is star-like about $0$, $\{0,+1,e^{2\pi i\beta}\}  \subset \mathds{M}_{[\alpha],[\beta]}$, and $\mathds{M}_{[\alpha],[\beta]} \cap (1,\infty) = \emptyset$ 
        \item 
        $\mathds{M}_{[\alpha],[\beta]}$ really is well defined as depending on $([\alpha],[\beta]) \in \mathds{R}/\mathds{Q}/\mathds{Z}\times\mathds{R}/\mathds{Z}$, that is 
        $\forall (m,n)\in\mathds{Z}^2$,$\mathds{M}_{[\alpha],[\beta]} = \mathds{M}_{[\alpha+n],[\beta+m]}$, and in addition $s(\mathds{M}_{[\alpha],[\beta]}) = \mathds{M}_{[-\alpha],[-\beta]}$. 
    \end{itemize}
\end{proposition}
\begin{proof}
    Since $\mathds{Y}_n$ sends vertical lines to vertical lines, this implies that each $I^j_n$ and indeed $I_n$ is the graph above some function, i.e. each consists of half-infinite lines. This implies that $\mathds{M}_{[\alpha],[\beta]}$ is star like about $0$, and it is clearly compact as it is closed and bounded.
    \par
    Since $\mathds{Y}_n(0)=0$ for all $n$ it is clear that $+1\in\mathds{M}_{[\alpha]}$ and indeed by the contractivity property this will in fact be an end point. To see that $e^{2\pi i\beta}$ lies in $\mathds{M}_{[\alpha],[\beta]}$ it is sufficient to note that $\mathrm{Re}\mathds{Y}_n(\beta_n/\alpha_n)=0$ for all $n$ which follows by the modified extra functional relation \textbf{F3}. Since $\beta_0 = \delta_0 b_0 \alpha_0+\delta_1b_1\alpha_1\alpha_0$ this implies that the $I_n$ set really does preserve this relation in the sense that the half line at $\beta_n/\alpha_n$ always goes up until $0$. Then by contractivity this once again implies $e^{2\pi i\beta}$ is an end point.
    \par
    For the second point we note that $\mathds{M}_{[\alpha],[\beta]}$ is defined entirely from the continued fraction entries of $\alpha$ and $\beta$. These will be the same under translations of integers, immediately giving the result. A similar argument was explained in depth in \cite{cheraghi2022arithmetic}. To show that $s(\mathds{M}_{[\alpha],[\beta]}) = \mathds{M}_{[-\alpha],[-\beta]}$ it amounts to finding the continued fractions for $-\alpha$ and $-\beta$ which will have a negative sign for $\epsilon_0$, clearly yielding the necessary conditions for the equation to hold.
\end{proof}
\begin{lemma}
\label{renorms}
    For $\alpha\in(0,1/2)$, $\mathds{M}_{[1/\alpha],[\beta/\alpha]}= \{s(e^{2\pi i \alpha_0w}) | w\in I_0\}\cup\{0\}$, while for $\alpha\in(-1/2,0)$, $\mathds{M}_{[-1/\alpha],[-\beta/\alpha]}= \{s(e^{2\pi i \alpha_0w}) | w\in I_0\}\cup\{0\}$
\end{lemma}
\begin{proof}
    In the successive numbers sub-section an equivalent notion to the action of gauss map was defined which generates the sequence of numbers $(\alpha_n,\beta_n)$. Because this sequence is defined via 
    $$
    (\alpha_n,\beta_n)\mapsto (d(1/\alpha_n,\mathds{Z}),d(\beta_n/\alpha_n,\mathds{Z}))=(d(-1/\alpha_n,\mathds{Z}),d(-\beta_n/\alpha_n,\mathds{Z}))
    $$ 
    the result follows by noting that the set $\{s(e^{2\pi i \alpha_0w}) | w\in I_0\}\cup\{0\}$ is clearly equivalent in definition to $s(\mathds{M}_{[1/\alpha],[\beta/\alpha]})$, but this is equal to $\mathds{M}_{[-1/\alpha],[-\beta/\alpha]}$ by the third point of the above proposition.
\end{proof}

\begin{lemma}
\label{minuslem}
Although we do not have that $\mathds{M}_{[\alpha],[\beta]}=e^{-2\pi i\beta}\mathds{M}_{[\alpha],[-\beta]}$ in general, it is true that there exists a constant $C_{\ref{minuslem}}>1$ such that:
$$
\mathds{M}_{[\alpha],[\beta]}\subset C_{\ref{minuslem}}e^{-2\pi i\beta}\mathds{M}_{[\alpha],[-\beta]}
$$
and
$$
C_{\ref{minuslem}}^{-1}e^{-2\pi i\beta}\mathds{M}_{[\alpha],[-\beta]} \subset \mathds{M}_{[\alpha],[\beta]}
$$
\end{lemma}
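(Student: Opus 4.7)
The plan is to first reduce the claim to an approximate rotational invariance of $\mathds{M}_{\alpha,\beta}$ itself, and then establish that invariance inductively using the functional relation (F3) at every level of the tower.

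For the reduction, I would observe that the continued-fraction data entering the construction depends on $\beta$ only through the even function $d(\cdot,\mathds{Z})$. Hence the sequences $\{\alpha_n,\beta_n,\epsilon_n\}_{n\geq 0}$ and $\{\delta_n\}_{n\geq 1}$ are identical for $(\alpha,\beta)$ and $(\alpha,-\beta)$; only the outermost sign $\delta_0$ can flip. Applying (F3) with $(r,s)=(\alpha_0,\beta_0)$ gives
$$
Y_{\alpha_0,\beta_0}(w-\beta_0/\alpha_0)+\beta_0=Y_{\alpha_0,\beta_0}(w),
$$
so the cases $(\epsilon_0,+1)$ and $(\epsilon_0,-1)$ in the definition of $\mathds{Y}_0$ yield exactly the same function. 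Consequently $\mathds{Y}_0$ does not actually depend on $\delta_0$, the recursion produces identical sets $I^j_n$ for $\beta$ and $-\beta$, and therefore $\mathds{M}_{\alpha,\beta}=\mathds{M}_{\alpha,-\beta}$ as subsets of $\mathds{C}$. The lemma thus reduces to the assertion that $\mathds{M}_{\alpha,\beta}$ is approximately invariant under rotation by $e^{-2\pi i\beta}$.

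Unfolded through the parameterisation $w\mapsto s(e^{2\pi i w})$, the reduced claim becomes: for every $w\in I_{-1}$ there is $w'\in I_{-1}$ with $\mathrm{Re}(w')\equiv\mathrm{Re}(w)+\beta\pmod{1}$ and $|\mathrm{Im}(w')-\mathrm{Im}(w)|$ uniformly bounded, and vice versa. The engine is (F3) applied at every level: the identity $\mathds{Y}_{n+1}(u+\beta_{n+1}/\alpha_{n+1})=\mathds{Y}_{n+1}(u)+\epsilon_{n+1}\beta_{n+1}$ trades a shift of the $\mathds{Y}_{n+1}(I^j_{n+1})$-copy inside $I^{j+1}_n$ by $\beta_{n+1}$ for a shift of $I^j_{n+1}$ by $\beta_{n+1}/\alpha_{n+1}=b_{n+1}+\delta_{n+2}\beta_{n+2}$. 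The integer part $b_{n+1}$ is harmless thanks to the period-$1$ invariance of $I^j_{n+1}$ on its interior $\mathrm{Re}(w)\in[0,1/\alpha_{n+1}-1]$, leaving a residual shift by the smaller quantity $\beta_{n+2}$ to be handled at the next level. Iterating this reduction pushes the required shift arbitrarily far down the tower, where $\beta_n\to 0$.

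The approximation error at each level arises from the ``boundary columns'' in the construction of $I^{j+1}_n$ (the $l=a_n-1$ or $l=a_n+1$ terms in which $K^j_{n+1}$ or $J^j_{n+1}$ replaces $I^j_{n+1}$ and period-$1$ invariance fails). Corollary \ref{maxminy} together with Proposition \ref{height} bounds the height-variation of $\mathds{Y}_n$ across a fundamental domain by a universal constant, and the $0.9$-contractivity of each $\mathds{Y}_n$ (item 5 of Proposition 3.3) ensures that this bounded error, when pushed back through $\mathds{Y}_n,\mathds{Y}_{n-1},\ldots,\mathds{Y}_0$ to level $-1$, shrinks geometrically. Summing the resulting geometric series yields the uniform constant $C_{\ref{minuslem}}$. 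The main obstacle will be the careful accounting of these boundary contributions: the combinatorial integers $a_n,b_n$ can be arbitrarily large, but the contractivity of $\mathds{Y}_n$ precisely absorbs this combinatorial complexity, so that the final bound depends only on the universal constants of Proposition 3.3, Corollary \ref{maxminy}, and Proposition \ref{height}.
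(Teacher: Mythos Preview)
Your approach is essentially the same as the paper's: both identify that the (F3)-type translation symmetry relates the two towers up to deviations arising only at the boundary strips where the gluing relation (F2) is invoked, and both sum those per-level deviations via the $0.9$-contractivity as a geometric series (the paper simply states the deviation is at most $0.9$ per stage, giving $\sum_k 0.9^k = 10$ and hence $C_{\ref{minuslem}} = e^{20\pi}$). Your preliminary reduction $\mathds{M}_{\alpha,\beta} = \mathds{M}_{\alpha,-\beta}$ via (F3) at level $0$ is an additional observation the paper does not isolate, but it leads to exactly the same bookkeeping of boundary errors controlled by contractivity.
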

\begin{proof}
    For shorthand use $_{\alpha,\beta}I_j^n$ to denote the $I_n$ sets that arise from the initial choice of $(\alpha,\beta)$. We cannot get equality here because the curves that define the boundaries of the $_{\alpha,\beta}I_j^n$ will differ from that of the $_{\alpha,-\beta}I_j^n-\beta_n/\alpha_n$ at particular points because we rely on the second function relation to ``match up" the dynamics as we return to the initial sector. However, due to contractivity, this deviation will be at most by $0.9$ at each stage, thus yielding a maximal difference of the geometric sum, which is $10$. So take $C_{\ref{minuslem}}=e^{20\pi}$ and the result follows.
\end{proof}
\begin{definition}[The Grand Change of Coordinates $\mathcal{Y}_{n,n+k}$]
    Now consider $I_n$. Since each of the $\mathds{Y}_n$ are bijections, we can consider the restriction: $\mathds{Y}_{n+1}^{-1}|_{0\leq\mathrm{Re}(x)\leq1/\alpha_n}(I_n)$. Now on the domain $0\leq\mathrm{Re}(x)\leq1/\alpha_{n+1}$, we get 
    $$
    \mathds{Y}_{n+1}^{-1}|_{0\leq\mathrm{Re}(x)\leq1/\alpha_n}(I_n) = I_{n+1}
    $$
    In this spirit define:
    \begin{equation*}
        \begin{aligned}
            \mathcal{Y}_{n,n+1}^{-1}(w) = \mathds{Y}_{n+1}^{-1}(w-m)+m/\alpha_{n+1} & & \mathrm{Re}(w)-m\in[0,1/\alpha_n]
        \end{aligned}
    \end{equation*}
    and inductively define:
    $$
    \mathcal{Y}_{n,n+j+1}^{-1}(w) = \mathcal{Y}^{-1}_{n+j,n+j+1}\circ\mathcal{Y}^{-1}_{n,n+j}(w)
    $$
    \begin{equation*}     
    \mathcal{Y}_{n,n+j+1}^{-1}(w) = \mathcal{Y}^{-1}_{n+j,n+j+1}\circ\mathcal{Y}^{-1}_{n,n+j}(w)
     \qquad \mathrm{Re}(w)-m\in[0,1/\alpha_n]
    \end{equation*}
    Here the domain of definition is extremely subtle, and it ensures that we obtain:
    $$
    \mathcal{Y}^{-1}_{n,n+k}(I_n)|_{0\leq\mathrm{Re}(w)\leq 1/\alpha_n} = I_{n+k}
    $$
    and
    $$
    \mathcal{Y}_{n,n+k}(I_{n+k})|_{0\leq\mathrm{Re}(w)\leq 1/\prod_{i=n}^k\alpha_i} = I_{n}
    $$
    In particular this map is also going to be a homeomorphism from $I_n$ to $I_{n+k}$, that now incorporates all the complicated structure that arises in the $I_n$.
\end{definition}

\begin{lemma}
    
The map $\mathcal{Y}_{n,n+k}$ enjoys the following properties:

\begin{itemize}
    \item $\mathcal{Y}_{n,n+k}$ will be contracting with contraction factor $0.9^k$.
    \item $\mathcal{Y}^{-1}_{n,n+k}$ will satisfy \textit{small scale} relations. In particular it is a consequence of \cite{cheraghi2022arithmetic} manipulating the functional relations that for any $w\in \mathds{H}'$ with $\Re(w)\in [0,1/1/\prod_{i=n}^k\alpha_i]$ that is in the first pre-image of $\mathcal{Y}^{-1}_{n,n+k}$ there exists a $w'$ that is also a pre-image of a point in $I_n$, such that the real part of $w'$ is within distance $\prod_{i=n}^k\alpha_i$ of $w$, while the imaginary parts will be within $0.9^{k}$ of $w$.
    
\end{itemize}
\end{lemma}
\begin{proof}
    We can proceed by induction, noting that $\mathds{Y}_n^{-1}$ clearly satisfies small scale functional relations. For the inductive case, note that the "patching up" that arises is entirely due to the definition of the $I_n$ sets that involve linking up sectors at intervals of distance $1$ and $1/\alpha_n-1$. But then in either case the "small scale" relationship continues. For more specific details on why these small scale relations persist, see \cite{cheraghi2022arithmetic}.
\end{proof}
\begin{theorem}
    $\mathds{M}_{[\alpha],[\beta]}$ depends continuously on $(\alpha,\beta)$ in the Hausdorff topology.
\end{theorem}
\begin{proof}
    This essentially follows directly from the fact that the change of coordinates depends continuously on $(\alpha,\beta)$, and furthermore that when $(\alpha',\beta')$ and $(\alpha,\beta)$ are sufficiently close, the ``continued fraction with respect to $\alpha$" will be the same up until some large $N$.
\end{proof}
 
\subsection{The Map $\mathds{T}_{[\alpha],[\beta]}$}
Fix $(\alpha,\beta) \in \mathds{R}\setminus \mathds{Q} \times \mathds{R}$. Given $w_{-1} \in I_{-1}$, inductively define the integers $l_i$ and points $w_{i+1}$ such that:
$$
0\leq\mathrm{Re}(w_i-l_i)<1, \:if\: \epsilon_{i+1} = -1; \; \; \; -1 \leq \mathrm{Re}(w_i-l_i)\leq0, \:if \:\epsilon_{i+1}=+1
$$
These correspond directly to the ``expansion of $x$ with respect to $\alpha$" referenced in a previous chapter.
and 
$$
\mathds{Y}_{i+1}(w_{i+1}) + l_i = w_i
$$
It follows that for all $n\geq0$, we have:
$$
w_{-1} = (\mathds{Y}_0 + l_{-1})\circ(\mathds{Y}_1 + l_0)\circ...\circ((\mathds{Y}_n + l_{n-1})(w_n)
$$
and in addition
$$
0\leq l_i\leq a_i +\epsilon_{i+1}, \: \: \and \: \: 0 \leq \mathrm{Re}(w_i) \leq 1/\alpha_i
$$
Call $(w_i,l_i)_{i\geq-1}$ the trajectory of $w_{-1}$, with respect to $\alpha$. We remark that if one sets $w_{-1} = \beta$, then $(\mathrm{Re}(w_i),l_i)_{i\geq-1} = (\beta_i,b_i)_{i\geq-1}$. Define the map
$$
\tilde{T}_{\alpha,\beta}: I_{-1} \rightarrow I_{-1}
$$
as follows. If $w_{-1} \in I_{-1}$, and $(w_i,l_i)_{i\geq-1}$ is its trajectory, then:
\begin{enumerate}
    \item if there is a $n\geq0$ such that $w_n \in K_n$, and for all $0\leq i \leq n-1$, $w_i \in I_i \setminus K_i$, then:
    $$
    \tilde{T}_{\alpha,\beta}(w_{-1}) = (\mathds{Y}_0 + \frac{\epsilon_0+1}{2})\circ(\mathds{Y}_1 + \frac{\epsilon_1+1}{2})\circ...\circ((\mathds{Y}_n + \frac{\epsilon_n+1}{2})(w_n+1)
    $$
    \item if for all $n\geq 0$, $w_n \in I_n\setminus K_n$, then:
    $$
    \tilde{T}_{\alpha,\beta}(w_{-1}) = \lim_{n\rightarrow+\infty}(\mathds{Y}_0 + \frac{\epsilon_0+1}{2})\circ(\mathds{Y}_1 + \frac{\epsilon_1+1}{2})\circ...\circ((\mathds{Y}_n + \frac{\epsilon_n+1}{2})(w_n+1-1/\alpha_n)
    $$
\end{enumerate}
\begin{proposition}
    $$
\tilde{T}_{\alpha,\beta} : I_{-1}/\mathds{Z} \rightarrow I_{-1}/\mathds{Z}
$$
is well defined, continuous and injective.
\end{proposition}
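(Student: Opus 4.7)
The plan is to follow the Cheraghi \cite{cheraghi2022arithmetic} strategy for the unicritical case, adapted to the bi-critical construction. I split the argument into three parts: well-definedness (trajectory exists uniquely and the Case-2 limit converges into $I_{-1}$), continuity, and injectivity.

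For well-definedness, I would first argue inductively that the trajectory $(w_i,l_i)_{i\geq -1}$ is uniquely determined by $w_{-1}\in I_{-1}$. Given $w_i\in I_i$, each $I_i$ is built as a union, up to boundaries, of translates of $\mathds{Y}_{i+1}$-images of pieces of $I_{i+1}$. The sign-dependent real-part condition $0\leq\Re(w_i - l_i)<1$ (resp.\ $-1\leq\Re(w_i - l_i)\leq 0$) selects a unique $l_i$ from the admissible range $\{0,\dots,a_i+\epsilon_{i+1}\}$, and sets $w_{i+1}=\mathds{Y}_{i+1}^{-1}(w_i-l_i)\in I_{i+1}$. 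In Case 1, the formula is a finite composition of continuous maps evaluated at $w_n+1$; since $w_n\in K_n$ has $\Re(w_n)\in[0,1/\alpha_n-1]$, the point $w_n+1$ has real part in $[1,1/\alpha_n]$ and the translation-invariance corollary for $I_n$ places $w_n+1\in I_n$, ensuring the composition lies in $I_{-1}$. In Case 2, consecutive partial compositions differ by $O(0.9^n)$ because each $\mathds{Y}_i$ is a $0.9$-contraction, so the limit exists; it lies in $I_{-1}$ since each partial composition lies in $I^{j}_{-1}$ for appropriate $j$, and the $I^j_{-1}$ are closed and nested.

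For continuity, the delicate issue is the matching between Case 1 at level $n$, Case 1 at level $n+1$, and Case 2. Suppose $w_{-1}^{(k)}\to w_{-1}$ where the trajectory of $w_{-1}$ has $w_n$ on $\partial K_n\cap\partial J_n$, i.e.\ $\Re(w_n)=1/\alpha_n-1$. For nearby $w_{-1}^{(k)}$, the trajectory may terminate at level $n$ (Case 1) or continue one further step. The functional relation (F2), namely $Y_{r,s}(it+1/r-1)=Y_{r,s}(it)+1-r$, together with (F1), was engineered precisely so that applying the Case-1 formula at level $n$ to $w_n+1$ agrees with applying the level-$(n+1)$ formula to the corresponding pullback. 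I would unwind the $\tfrac{\epsilon_i+1}{2}$ shifts and check this matching case-by-case on the sign configuration of $(\epsilon_i,\delta_i)$; this step is the main obstacle because the four sign cases each require separate verification, and (F3) is also needed to handle the $\beta_n/\alpha_n$ translation when $\delta_{i+1}$ flips.

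For injectivity, suppose $\tilde{T}_{\alpha,\beta}(w_{-1})=\tilde{T}_{\alpha,\beta}(w_{-1}')$. I would pass to their trajectories and show inductively that they must coincide. Each $\mathds{Y}_i$ is a bijection by Proposition~3.3(1), and the translates composing $I_i$ have pairwise disjoint interiors with boundary overlaps identified in the quotient by $\mathds{Z}$; consequently the trajectory determines $w_{-1}$ modulo $\mathds{Z}$. Equating the two compositions and peeling off one $\mathds{Y}_i$ at a time, the shifts $\tfrac{\epsilon_i+1}{2}$ match since they are determined by $\epsilon_i$ alone, and injectivity of $\mathds{Y}_i$ forces $w_i=w_i'$ at every level, hence $w_{-1}\equiv w_{-1}'\pmod{\mathds{Z}}$. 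The main obstacle throughout is the continuity verification at boundary points, which depends on the finely tuned functional relations and must be carried out across all sign configurations.
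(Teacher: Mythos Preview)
Your proposal is correct and follows essentially the same approach as the paper: both defer to Cheraghi's unicritical argument, relying on the functional relations (F1), (F2), (F3) to make the pieces match at boundary points and on the uniform $0.9$-contraction of the $\mathds{Y}_i$ for convergence in Case~2. Your write-up is in fact considerably more detailed than the paper's own proof, which is a one-sentence sketch referring to \cite{cheraghi2022arithmetic}.
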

\begin{proof}
    The main idea of the proof for the above statement is to partition the set $I_{-1}$ into infinitely many pieces, 
where the map is continuous on each piece, then show that the maps on the pieces match at the boundary points. In particular let
\[W^n= \{ w_{-1} \in I_{-1} \mid \text{ for all } 0 \leq i \leq n-1, w_i \in I_i \setminus K_i, \text{and}\: w_n \in K_n\},\]  
and
\[V^n =\{w_{-1} \in I_{-1} \mid \text{ for all } 0 \leq i \leq n, w_i  \in I_i \setminus K_i\}.\]
and set $V^{\infty}:=\cap_{n\geq 0} V^n$, which
clearly implies $I_-1=\cup_{n\geq0}W^n\cup V^{\infty}$. It is clear that the maps $w_{-1}\mapsto w_n$ is continuous and injective on both sets $W^n$ and $V^n$. This directly implies that $\tilde{T}_{\alpha,\beta}$ is continuous and injective on $W^n$. To obtain continuity on $V^\infty$ we note that each $V^n$ will in fact be relatively open in $I_{-1}$. In fact it may sometimes be empty (for instance when $\epsilon_i=-1$ for all $i\geq0$), but assuming it is nonempty, we essentially make use of the second functional relation along with the contractivity property to ensure that the limit map generated by the chain of change of coordinates actually exists and is continuous.
\par
Finally to conclude we have to ensure that the map matches up along the boundaries, but this follows from the second functional relation. For further details please see \cite{cheraghi2022arithmetic}, as the proof is exactly the same since it only depends on the first two functional relations.
\end{proof}
\begin{proposition}
    $$
    \mathds{T}_{[\alpha],[\beta]} (re^{2\pi i\theta}) = a_{\alpha,\beta}(r,\theta) e^{2\pi i (\theta + \alpha)}
    $$
\end{proposition}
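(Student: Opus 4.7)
The claim is that $\mathds{T}_{\alpha,\beta}$ rotates arguments by $2\pi\alpha$. My plan is to transport the identity to the universal cover: since $\mathds{M}_{\alpha,\beta}\setminus\{0\} = \{s(e^{2\pi i w}) : w \in I_{-1}\}$ and $s(e^{2\pi i (x+iy)}) = e^{-2\pi y}e^{-2\pi i x}$, for $z=re^{2\pi i\theta}\in\mathds{M}_{\alpha,\beta}\setminus\{0\}$ we have $r = e^{-2\pi \mathrm{Im}(w)}$ and $\theta \equiv -\mathrm{Re}(w) \pmod{\mathds{Z}}$. Hence the claimed formula is equivalent to the real-part shift identity
$$
\mathrm{Re}(\tilde{T}_{\alpha,\beta}(w)) \equiv \mathrm{Re}(w) - \alpha \pmod{\mathds{Z}} \quad \text{for every } w \in I_{-1},
$$
with $a_{\alpha,\beta}(r,\theta) := e^{-2\pi \mathrm{Im}(\tilde{T}_{\alpha,\beta}(w))}$ absorbing the imaginary-direction behaviour of $\tilde{T}_{\alpha,\beta}$.

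To prove the shift identity, I would first verify it at the base point $w_{-1} = 0$. Since $Y_{\alpha_n,\beta_n}(0) = 0$ (by construction, using \textbf{F3} in the $\delta_n = +1$ sub-cases to reduce to $Y_{\alpha_n,\beta_n}(-\beta_n/\alpha_n) = -\beta_n$), each $\mathds{Y}_n$ fixes the origin, so the trajectory of $w_{-1} = 0$ is trivial with $w_n = 0$ and $l_i = 0$ for all $i$. Because $0 \in K_0$, case 1 applies at depth $n = 0$, giving $\tilde{T}_{\alpha,\beta}(0) = \mathds{Y}_0(1) + (\epsilon_0 + 1)/2$. The real part of $Y_{\alpha_0,\beta_0}(1+iy)$ is $\alpha_0$: the building block $G_{\alpha_0,\beta_0}$ has real part $\alpha_0 x$, and every translation adjustment in $\tilde{Y}^{\text{pre}_2}_{\alpha_0,\beta_0}$ (shifts by $\pm \beta_0/\alpha_0$, $\pm 1/\alpha_0$ combined with offsets $\pm \beta_0$, $\pm \alpha_0$) preserves the real part $\alpha_0 x$ exactly. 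After incorporating the orientation twist for $\epsilon_0 = +1$ (conjugation by $s$ negates real parts), one obtains $\mathrm{Re}(\tilde{T}_{\alpha,\beta}(0)) \equiv \epsilon_0 \alpha_0 \pmod{\mathds{Z}}$, which is $-\alpha \pmod{\mathds{Z}}$ because $\alpha = a_{-1} + \epsilon_0 \alpha_0$.

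To extend from $w = 0$ to all of $I_{-1}$, I would use continuity. By item (1) of Proposition 3.3 each $Y_{\alpha_n,\beta_n}$ preserves vertical lines, so every $\mathds{Y}_n$ does, and the real part of $\tilde{T}_{\alpha,\beta}(w)$ depends only on $\mathrm{Re}(w)$. Combined with the continuity of $\tilde{T}_{\alpha,\beta}$ on $I_{-1}/\mathds{Z}$ (established in the preceding proposition), the induced real-part map lifts to a continuous function $\mathds{R} \to \mathds{R}$ whose lifted increment $w \mapsto \mathrm{Re}(\tilde{T}_{\alpha,\beta}(w)) - \mathrm{Re}(w)$ is continuous and $\mathds{Z}$-valued after subtracting $-\alpha$; by connectivity of $I_{-1}/\mathds{Z}$ the increment is constant, hence everywhere equal to its value $-\alpha$ at $w = 0$. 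The main obstacle I anticipate is the bookkeeping needed if one chooses instead a direct trajectory computation: propagating a shift of $+1$ at depth $n$ in case 1 up through the $(\mathds{Y}_i + (\epsilon_i+1)/2)$ compositions, one must check that the telescoping real-part shifts of $\prod_{i\leq n}\epsilon_i\alpha_i$ combined with the difference between the trajectory shifts $l_{i-1}$ and the formula shifts $(\epsilon_i + 1)/2$ reduce to $-\alpha$ modulo $\mathds{Z}$. The continuity route above avoids this, but a careful verification is still needed for case 2, where one must argue that the $0.9^n$-contractivity of $\mathcal{Y}_{0,n}^{-1}$ makes the tail adjustment $+1 - 1/\alpha_n$ contribute only integer amounts to the real part at level $-1$ in the limit.
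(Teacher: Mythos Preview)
Your reduction to the real-part shift $\mathrm{Re}(\tilde T_{\alpha,\beta}(w))\equiv\mathrm{Re}(w)-\alpha\pmod{\mathds Z}$ is exactly the paper's route (which simply says ``$\tilde T$ sends vertical lines to vertical lines, acting as rotation by $\alpha$'' and cites Cheraghi). Your base-point check at $w=0$ is fine apart from a sign slip: you actually obtain $\mathrm{Re}(\tilde T(0))\equiv -\epsilon_0\alpha_0\pmod{\mathds Z}$, not $\epsilon_0\alpha_0$, and this is what equals $-\alpha$.

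The genuine gap is in your continuity step. You assert that the increment $\mathrm{Re}(\tilde T(w))-\mathrm{Re}(w)$ is ``$\mathds Z$-valued after subtracting $-\alpha$'', but that is precisely the statement to be proved; nothing you have said forces the increment to take values in $-\alpha+\mathds Z$ rather than varying continuously. Continuity alone cannot pin down a continuous real-valued function to a single value without some discreteness input.

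The missing observation is the one you gesture at but then set aside as ``bookkeeping'': because $\mathrm{Re}\,Y_{\alpha_n,\beta_n}(x+iy)=\alpha_n x$ exactly, each $\mathds Y_n$ acts on real parts as multiplication by $\pm\alpha_n$. On any region where the trajectory data $(n;l_0,\dots,l_{n-1})$ is constant, the trajectory map $\mathrm{Re}(w_{-1})\mapsto\mathrm{Re}(w_n)$ scales by $\prod_i(\pm\alpha_i)^{-1}$, while the formula $(\mathds Y_0+\cdots)\circ\cdots\circ(\mathds Y_n+\cdots)(w_n+1)$ scales real parts back by $\prod_i(\pm\alpha_i)$; the composite therefore has slope exactly $1$ in $\mathrm{Re}(w)$, so the increment is \emph{locally constant}. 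Now continuity on $I_{-1}/\mathds Z$ and connectivity force the increment to be globally constant, equal to its value $-\alpha$ at $w=0$. This is the computation you should carry out rather than avoid; once the slope-$1$ fact is stated, the telescoping you worried about becomes unnecessary.
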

\begin{proof}
    To show that $\mathds{T}_{[\alpha],[\beta]}$ acts as a rotation by $2\pi\alpha$ in the tangential direction, it is enough to show that $\tilde{T}_{\alpha}$ acts as a translation by $-\alpha$ on both the sets $W_n$ (for which the limiting union set is dense). But because each change of coordinates map $\mathds{Y}_n$ acts by multiplication of $-\alpha_n$ or $\alpha_n$ on the real line, it follows that the special definition of translation below will pull up to translation by $\alpha$. For more details see $\cite{cheraghi2022arithmetic}$. This is sufficient because the $\mathds{Y}_n$ send vertical lines to vertical lines.
\end{proof}
\begin{proposition}
    $s\circ\mathds{T}_{[\alpha],[\beta]}\circ s = \mathds{T}_{[-\alpha],[-\beta]}$ on $\mathds{M}_{[-\alpha],[-\beta]}$
\end{proposition}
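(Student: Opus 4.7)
The statement reduces, via the encoding $\mathds{M}_{\alpha,\beta}=\{s(e^{2\pi i w}):w\in I_{-1}^{\alpha,\beta}\}\cup\{0\}$, to the claim that the lifted map $\tilde{T}$ on $I_{-1}$ is equivariant under the involution $\iota(w):=-\bar{w}$, in the sense that $\tilde{T}_{-\alpha,-\beta}\circ\iota\equiv\iota\circ\tilde{T}_{\alpha,\beta}\pmod{\mathds{Z}}$. Indeed, since $s(e^{2\pi i w})=e^{-2\pi i\bar{w}}$, the map $\iota$ is exactly the lift of $s$ to $\mathds{H}'/\mathds{Z}$, so the desired identity $s\circ\mathds{T}_{\alpha,\beta}\circ s=\mathds{T}_{-\alpha,-\beta}$ on $\mathds{M}_{-\alpha,-\beta}$ will follow formally from the lifted identity. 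The angular part is immediate from the preceding proposition: both sides rotate angles by $-\alpha$; only the radial coordinates require genuine work.

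The functorial input is that the continued-fraction data behaves predictably under $(\alpha,\beta)\mapsto(-\alpha,-\beta)$. Because $\alpha_n=d(1/\alpha_{n-1},\mathds{Z})\in[0,1/2]$ and $\beta_n=d(\beta_{n-1}/\alpha_{n-1},\mathds{Z})$ are $\mathds{Z}$-distance quantities built from positive data once $n\geq 1$, the sequences $\{\alpha_n\},\{\beta_n\},\{\epsilon_n\}_{n\geq1},\{\delta_n\}_{n\geq1}$ are invariant under the involution; only $\epsilon_0$ and $\delta_0$ flip sign. Inspecting the four cases in the definition of $Y_n$, this base-case sign flip translates exactly into the relation $Y_0^{-\alpha,-\beta}=\iota\circ Y_0^{\alpha,\beta}$ (with the translation $\beta_n/\alpha_n$ absorbed correctly into the $\pm\beta_n$ offset), while $Y_n^{-\alpha,-\beta}=Y_n^{\alpha,\beta}$ for all $n\geq 1$.

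Equivariance of the sets then follows by induction. The strip $I^0_n=\{\mathrm{Re}(w)\in[0,1/\alpha_n]\}$ is $\iota$-invariant modulo $1/\alpha_n\mathds{Z}$, and since $I^{j+1}_n$ is defined through $Y_{n+1}$ which is literally unchanged for $n\geq 0$, the sets $I^j_n$ for $n\geq 0$ are all $\iota$-invariant; for $n=-1$, the relation $Y_0^{-\alpha,-\beta}=\iota\circ Y_0^{\alpha,\beta}$ together with the compensating change in integer-shift index (because $\epsilon_0$ flips) yields $I^j_{-1}(-\alpha,-\beta)\equiv\iota(I^j_{-1}(\alpha,\beta))\pmod{\mathds{Z}}$, and passing to the intersection $I_{-1}^{-\alpha,-\beta}\equiv\iota(I_{-1}^{\alpha,\beta})\pmod{\mathds{Z}}$, consistent with the set identity $\mathds{M}_{-\alpha,-\beta}=s(\mathds{M}_{\alpha,\beta})$. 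For the map itself, given the trajectory $(w_i,l_i)_{i\geq -1}$ of $w_{-1}$ with respect to $(\alpha,\beta)$, a short check on the defining inequalities shows that $(\iota(w_i),-l_i)_{i\geq-1}$ is (after the base-case flip in sign) the trajectory of $\iota(w_{-1})$ with respect to $(-\alpha,-\beta)$, and the case selection (membership in $K_n$ versus $I_n\setminus K_n$) is preserved because the $K_n$ are cut out by real-part conditions invariant under $\iota$ modulo $1/\alpha_n$. Plugging into the formula for $\tilde{T}_{-\alpha,-\beta}(\iota(w_{-1}))$ (case 1, or the limiting case 2) and applying $Y_0^{-\alpha,-\beta}=\iota\circ Y_0^{\alpha,\beta}$ at the outermost composition recovers $\iota\circ\tilde{T}_{\alpha,\beta}(w_{-1})\pmod{\mathds{Z}}$, completing the argument.

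The main obstacle is the bookkeeping of the integer shifts $l_i$ and of the flipped shift $\frac{\epsilon_0+1}{2}$ that appears in the definition of $\tilde{T}$: one must verify that the four-case definition of $Y_0$ and the two-case definition of $\tilde{T}$ compose coherently under $\iota$, in particular that the limit in case 2 (involving $w_n+1-1/\alpha_n$) behaves correctly once the sign convention is swapped. None of this is deep, but it requires patience.
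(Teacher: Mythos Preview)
Your proposal is correct and follows essentially the same approach as the paper: the key observation in both is that passing from $(\alpha,\beta)$ to $(-\alpha,-\beta)$ flips only the base signs $(\epsilon_0,\delta_0)\mapsto(-\epsilon_0,-\delta_0)$ while leaving all deeper continued-fraction data $(\alpha_i,\beta_i,\epsilon_i,\delta_i)_{i\geq 1}$ unchanged, so that $Y_0$ is conjugated by $\iota=-s$ and all deeper $Y_n$ are literally invariant, whence the lifted identity $\tilde{T}_{-\alpha,-\beta}=\iota\circ\tilde{T}_{\alpha,\beta}\circ\iota\pmod{\mathds{Z}}$ and the projected statement. The paper's proof is a two-sentence sketch of exactly this; you have simply unpacked the trajectory and integer-shift bookkeeping that the paper leaves implicit.
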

\begin{proof}
Since this result also induces an action on $\beta$, it doesn't just follow from Cheraghi's work. Notice $s(\mathds{M}_{[\alpha],[\beta]})=\mathds{M}_{[-\alpha],[-\beta]}$, thus $s\circ\mathds{T}_{[\alpha],[\beta]}\circ s$ is defined here. When $(\alpha,\beta)$ changes to $-\beta$, note that $(\epsilon_0,\delta_0)$ changes to $(-\epsilon_0,-\delta_0)$, but all subsequent numbers $(\alpha_i,\beta_i)$ and $(\epsilon_i,\delta_i)$ remain the same. Thus by definition we get that $-s\circ\tilde{\mathds{T}}_{[\alpha],[\beta]}\circ -s$ on $I_{-1}$, projecting onto $\mathds{M}_{[-\alpha],[-\beta]}$ the result follows.
\end{proof}
\begin{lemma}
    Recall the geometric inequality given in Main Theorem A.
    $$ C_{\ref{mainA}}^{-1}\sqrt\frac{|\alpha|Q_{\alpha}(k)Q_{\alpha}(k-\beta/\alpha)}{|\beta|+|\alpha|}<\mathds{T}^{\circ k}_{[\alpha],[\beta]}(+1)<C_{\ref{mainA}}\sqrt\frac{|\alpha|Q_{\alpha}(k)Q_{\alpha}(k-\beta/\alpha)}{|\beta|+|\alpha|}
    $$ 
\end{lemma}
\begin{proof}
Cheraghi proved that his map satisfied the inequality:$    CQ_{\alpha}(k)<|\mathds{T}^{\circ k}_{[\alpha]}(+1)|<CQ_{\alpha}(k)$ on $0\leq k\leq 1/|\alpha|$. Because our change of coordinates depends on the $G_{r,s}$ functions, which are defined as a sum of Cheraghi's change of coordinates 
$$
Y_{r,s} \sim_C G_{r,s} -G_{r,s}(0) = \frac{1}{2}(Y_r(w)+Y_r(w-\beta/\alpha)-Y_r(0)-Y_r(-\beta/\alpha))
$$ the result yields by evaluating the value of this function on the line the $y=0$, noting that Cheraghi's result associates $Y_r(k)$ and $Y_r(k-\beta/\alpha)$ functions with $Q_\alpha(k)$ and $Q_{\alpha}(k-\beta/\alpha)$ respectively. The final conclusion is achieved by computing the values of $Y_r(0)$ and $Y_r(-\beta/\alpha)$ which was equivalent to the $\mathcal{M}(r,s)$ function described in \ref{maxminy},
\end{proof}
\subsection{The Renormalisation Operator}
\begin{figure}[H]
\captionsetup{justification=centering}
    \centering
\includegraphics[width=0.5\textwidth]{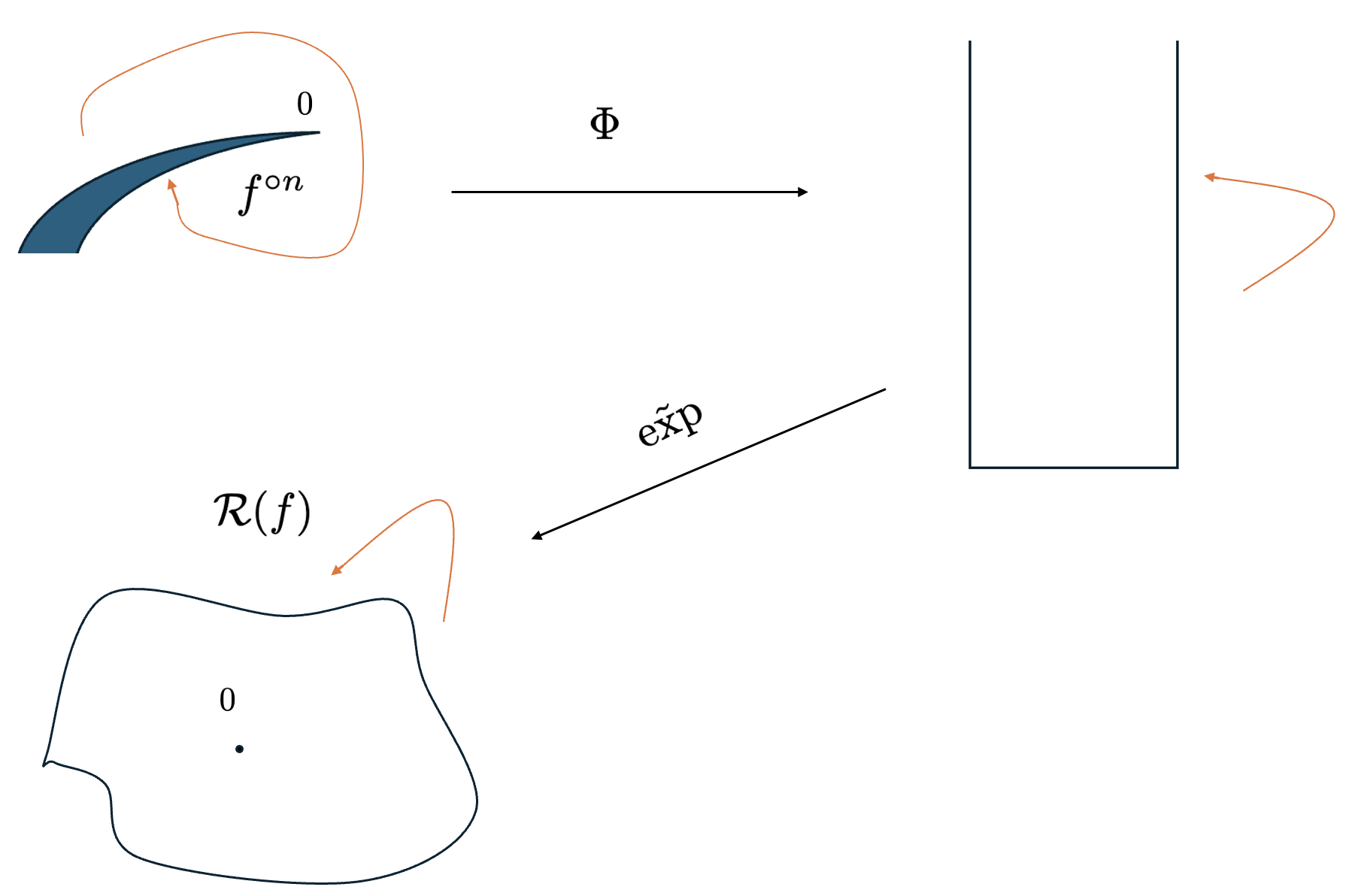}
    \caption{Renormalisation for holomorphic maps with irrationally indifferent fixed points.}
\end{figure}
In order to apply the model to actual holomorphic maps, it is necessary to establish an appropriate analogue of the Renormalisation operator. Defining a conjugacy from a holomorphic map to $\mathds{T}_{[\alpha],[\beta]}$ a sector 
$$
S_{\alpha}=\{ z\in\mathds{M}_{[\alpha],[\beta]}\setminus \{0\} | \mathrm{arg}(z)\in[0,2\pi\alpha)+2\pi\mathds{Z}\}
$$
Because $\mathds{T}_{[\alpha],[\beta]}$ acts as a rotation by $2\pi\alpha$ in the tangential direction, there will exist a return map to this sector, or an integer $k_z$ dependent on each $z\in S_{\alpha}$ such that $\mathds{T}^{\circ k_z}(z) \in S_\alpha$. Now define
$
\psi_{\alpha,\beta}: \mathds{H}'\rightarrow \mathds{C}\setminus\{0\}
$
as
$
\psi_{\alpha,\beta} := s(e^{2\pi i \mathds{Y}_0(w)})
$
Note that:
$$
S_\alpha \subset \psi_{\alpha,\beta} (\{ w \in \mathds{H}' | \mathrm{Re}(w)\in[0,1)\})
$$
There is a continuous inverse branch of $\psi_{\alpha,\beta}$, $\phi_{\alpha,\beta}$ defined on $S_\alpha$ going into$\{ w \in \mathds{H}' | \mathrm{Re}(w)\in[0,1)\}$. This will be the analogue of the perturbed fatou coordinate for $\mathds{T}_{[\alpha],[\beta]}$. Now the return map for $\mathds{T}_{[\alpha],[\beta]}$ will induce a map:
$$
h_{\alpha,\beta}(w) = \phi_{\alpha,\beta} \circ \mathds{T}^{\circ k_{\psi_{\alpha,\beta}(w)}}_{\alpha,\beta} \circ \psi_{\alpha,\beta}(w)
$$
which will project under $w\mapsto e^{2\pi i w}$ to a map $E_{\alpha,\beta}$ defined on $e^{2\pi i\phi_{\alpha,\beta}(S_{\alpha})}\subset \mathds{C}\setminus\{0\}$, which can be extended by setting $E_{\alpha,\beta}(0)=0$. This map, and it's domain of definition will be called the renormalisation $\mathcal{R}(\mathds{T}_{[\alpha],[\beta]}:\mathds{M}_{[\alpha],[\beta]} \rightarrow \mathds{M}_{[\alpha],[\beta]})$. For the case when $\alpha\in(-1/2,0)$ we simply apply complex conjugation to obtain a rotation number of $-\alpha$ and then proceed. Then the following holds:
\begin{proposition}
    For every $\alpha,\beta \in (-1/2,1/2)\setminus \mathds{Q}\times (-1/2,1/2)$ we have:
    $$
    \mathcal{R}(\mathds{T}_{[\alpha],[\beta]}:\mathds{M}_{[\alpha],[\beta]} \rightarrow \mathds{M}_{[\alpha],[\beta]}) = (\mathds{T}_{[-1/\alpha],[-\beta/\alpha]}:\mathds{M}_{[-1/\alpha],[-\beta/\alpha]} \rightarrow \mathds{M}_{[-1/\alpha],[-\beta/\alpha]})
    $$
\end{proposition}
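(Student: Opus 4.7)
The plan is to show that, after lifting the sector $S_\alpha$ to the strip via $\phi_{\alpha,\beta}$, the first-return map of $\mathds{T}_{\alpha,\beta}$ coincides with one step of the recursion that defines $\mathds{T}_{-1/\alpha,-\beta/\alpha}$; in other words, renormalisation undoes the level-$0$ stage of the inductive construction. This is natural because the Ostrowski-type sequences $(\alpha_n,\beta_n,\epsilon_n,\delta_n)$ for the renormalised parameters $(-1/\alpha,-\beta/\alpha)$ are, up to the sign transformation built into the definition of $M$, exactly the shifted tail $(\alpha_n,\beta_n,\epsilon_n,\delta_n)_{n\geq 1}$ of the original sequence. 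Thus the spaces and maps for the renormalised parameters are already sitting inside the $(\alpha,\beta)$-construction, one level deeper.

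First I would show that the domains agree as subsets of $\mathds{C}$. Combining the lemma $\mathds{M}_{1/\alpha,\beta/\alpha}=\{s(e^{2\pi i\alpha_0 w}): w\in I_0\}\cup\{0\}$ with the conjugation identity $\mathds{M}_{-\bm{\alpha}}=s(\mathds{M}_{\bm{\alpha}})$ and the integer periodicity in the parameters, one identifies $\mathds{M}_{-1/\alpha,-\beta/\alpha}$ with $e^{2\pi i\phi_{\alpha,\beta}(S_\alpha)}$. Conceptually, the sets $I^j_n$ in the $(-1/\alpha,-\beta/\alpha)$ construction are precisely the sets $I^j_{n+1}$ of the $(\alpha,\beta)$ construction, since each is built from the same tail of the sequence, and $\mathds{Y}_0$ is the coordinate change that glues the level-$0$ strip to the rest of the tower.

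Next I would match the dynamics. Take $z\in S_\alpha$ with lift $w_{-1}=\phi_{\alpha,\beta}(z)\in I_{-1}$, $\Re(w_{-1})\in[0,1)$. Its trajectory $(w_i,l_i)_{i\geq -1}$ from the definition of $\tilde{T}_{\alpha,\beta}$ gives $w_0\in I_0$ with $w_{-1}=\mathds{Y}_0(w_0)$. Each application of $\tilde{T}_{\alpha,\beta}$ increments $\Re(w_{-1})$ by $1$, and the return time $k_z$ is exactly the number of unit shifts at the $w_{-1}$-level needed to produce one unit shift at the $w_0$-level. Reading the definition of $\tilde{T}_{\alpha,\beta}$ off level $I_0$ then yields precisely $\tilde{T}_{-1/\alpha,-\beta/\alpha}$, and projecting via $w\mapsto s(e^{2\pi i w})$ gives $\mathds{T}_{-1/\alpha,-\beta/\alpha}$. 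The two cases in the definition of $\tilde{T}_{\alpha,\beta}$ (a finite-level return to some $K_n$ versus the limit case) correspond directly to the two cases in the renormalised map.

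The hard part will be the sign and shift bookkeeping. Passing from $(1/\alpha,\beta/\alpha)$ to $(-1/\alpha,-\beta/\alpha)$ combines the orientation data $\epsilon_1,\delta_1$ built into $\mathds{Y}_1$ with the complex conjugation $s$ appearing in $\psi_{\alpha,\beta}$; these must be tracked through the four cases in the definition of $Y_n$ and reconciled with the integer shifts $l_i$ in the trajectory. Continuity and well-definedness of the induced map across the sector boundary $\Re(w)\in\{0,1\}$ reduce to the functional relation (F1), while the limit case is controlled by the $0.9^k$ contractivity of $\mathcal{Y}_{n,n+k}$. Modulo these technical points, the argument parallels Cheraghi's unicritical renormalisation identity \cite{cheraghi2022arithmetic}, with the added layer of accounting for $\beta$ via the sequence $(\beta_n,\delta_n)$.
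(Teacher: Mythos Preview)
Your proposal is correct and follows essentially the same approach as the paper: both argue that the Fatou coordinate $\phi_{\alpha,\beta}$ inverts the level-$0$ change of coordinates $\mathds{Y}_0$, so that the renormalisation ``unwinds'' one level of the tower and recovers the $(\mathds{T}_{-1/\alpha,-\beta/\alpha},\mathds{M}_{-1/\alpha,-\beta/\alpha})$ system built from the shifted tail of the Ostrowski sequence. Your version is considerably more detailed than the paper's two-sentence sketch, and in particular you are right to flag the $\epsilon,\delta$ sign bookkeeping and the use of the lemma identifying $\mathds{M}_{1/\alpha,\beta/\alpha}$ with the exponential of $I_0$ as the places where the real work lies.
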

\begin{proof}
    The definition of the ``fatou coordinates" given for $\phi_{\alpha,\beta}$ unwinds the change of coordinates map $\mathds{Y}_0$. Precisely we mean that $S_{\alpha}$ as defined above will satisfy $\mathds{Y}_0^{-1}(\psi_{\alpha,\beta}(S_{\alpha})) = I_0$, hence by \ref{renorms} it follows that in fact the space upon which the renormalised map is defined is $\mathds{M}_{[-1/\alpha],[-\beta/\alpha]}$. 
    \par
    Finally we prove that the induced map $E_{\alpha,\beta}=\mathds{T}_{[-1/\alpha],[-\beta/\alpha]}$.
    Recall the definitions: If for all $0\leq n\leq m$, $w_n \in I_m\setminus K_m$ but $w_n\in K_n$, then:
     $$
    \tilde{T}_{[-1/\alpha],[-\beta/\alpha]}(w_{-1}) = (\mathds{Y}_0 + \frac{\epsilon_1+1}{2})\circ(\mathds{Y}_1 + \frac{\epsilon_2+1}{2})\circ...\circ((\mathds{Y}_n + \frac{\epsilon_n+1}{2})(w_n+1)
    $$
    whereas if for all $n\geq 0$, $w_n \in I_n\setminus K_n$, then:
    $$
    \tilde{T}_{[-1/\alpha],[-\beta/\alpha]}(w_{-1}) = \lim_{n\rightarrow+\infty}(\mathds{Y}_0 + \frac{\epsilon_1+1}{2})\circ(\mathds{Y}_1 + \frac{\epsilon_2+1}{2})\circ...\circ((\mathds{Y}_n + \frac{\epsilon_n+1}{2})(w_n+1-1/\alpha_n)
    $$

The map $\mathds{Y}_0+ (1+\epsilon_0)/2= \mathds{Y}_0+1: I_0 \to I_{-1}$ is a homeomorphism, so this allows us to consider the map 
Let us consider the map 
$$
\hat{T}_{[\alpha],[\beta]}= (\mathds{Y}_0+1)^{-1} \circ \tilde{T}_{[\alpha],[\beta]} \circ (\mathds{Y}_0+1): I_0 \to I_0.
$$
Via the conjugation, the return map of $\hat{T}_{[\alpha],[\beta]}$ will correspond to the return map of $\mathds{T}_{[\alpha],[\beta]}$, and indeed also to $E_{\alpha,\beta}$ under the projection $w\mapsto e^{2\pi i\alpha}$. But then the point is that in fact $\hat{T}_{[\alpha],[\beta]}$ will correspond exactly in definition to $\tilde{T}_{[-1/\alpha],[-\beta/\alpha]}$, thus completing the proof. 
\end{proof}

\section{Modified Brjuno and Herman Conditions}
\subsection{Brjuno and Herman Conditions}
The are many possible definitions of the Brjuno and Herman conditions based on different choices of fundamental domain. For instance, the initial definition of the Brjuno is based on a different choice of fundamental domain with $\alpha\in[0,1]$, using the sequence generated by:
$$
[\alpha]\mapsto [1/\alpha]
$$
rather than using $\alpha\in[-1/2,1/2]$ in which we get
$$
\alpha\mapsto d(1/\alpha,\mathds{Z})
$$
as a model for the "signless" part of the map
$$
[\alpha]\mapsto [1/\alpha]
$$
on this domain. A result of Cheraghi in \cite{cheraghi2022arithmetic} is that the Herman and Brjuno conditions generated from these different choices of domain are in fact equivalent. In particular, we can say:
\begin{definition}
    For $\alpha\in(-1/2,1/2)$, say that $\alpha\in\mathcal{B}$ if and only if:
    $$
    \mathcal{B}(\alpha):= \sum_{n=0}^{\infty} (\prod_{i=0}^{n-1}\alpha_i)\log(1/\alpha_n) <\infty
    $$
\end{definition}
Furthermore, we say that:
\begin{definition}
    For $\alpha\in(-1/2,1/2)\cap\mathcal{B}$ is in addition Herman (or $\alpha\in\mathcal{H}$ if for all $n\geq 0$ there is $m\geq n$ such that:
    $$
    h_{m-1}\circ ...\circ h_n(0)\geq \mathcal{B}(\alpha_n)
    $$
    where the $h_{n}$ are circle diffeomorphisms defined by:
    \[
h_n(y)=h_{r=|\alpha_n|}(y) =
\begin{cases}
r^{-1}\left( y - \log r^{-1} + 1 \right), & \text{if } y \ge \log r^{-1}, \\[6pt]
e^{y}, & \text{if } y \le \log r^{-1}.
\end{cases}
\]
\end{definition}
Both the Brjuno and Herman conditions are dense on the fundamental domain, as in particular they contain all bounded type numbers. In fact the Brjuno condition is full measure. On the other hand, the set $\mathcal{B}\setminus{\mathcal{H}}$ is also dense, as is $(-1/2,1/2)\setminus\mathcal{B}$.
\subsection{The Modified Bi-critical Versions}
Define the modified weighted Brjuno sum as follows:
$$
\mathcal{B}(\alpha,\beta) = \sum_{n=0}^{\infty} (\prod_{i=0}^{n-1}\alpha_i)\mathcal{M}(\alpha_n,\beta_n)
$$
where recall 
$\mathcal{M}(\alpha,\beta):= \log(1/\alpha) + \log (1/(\alpha+\beta))$. In particular, note that since $\frac{1}{2}\log\alpha^{-1}\leq \mathcal{M}(\alpha,\beta)\leq\log\alpha^{-1}$ $\forall \beta$:
\begin{equation*}
    \begin{aligned}
    &\frac{1}{2}\mathcal{B}(\alpha)\leq\mathcal{B}(\alpha,\beta)\leq \mathcal{B}(\alpha) \\
    \implies &( \mathcal{B}(\alpha) = \infty \iff \mathcal{B}(\alpha,\beta) = \infty)
    \end{aligned}
\end{equation*}
The upper bound is attained when $\beta=0$, and there exists a special choice of $\beta=\tilde{\beta}$ (which is close to but not equal to $1/2$ in general) and a small uniform constant $C>0$ such that 
    $$
    \frac{1}{2}\mathcal{B}(\alpha)<\mathcal{B}(\alpha,\tilde{\beta})<\frac{1}{2}\mathcal{B}(\alpha)+C.
    $$ This is because we can choose a $\beta$ such that $\beta_n \in[1/2-\alpha_n,1/2] $, which will then generate the constant given in the statement. In the next chapter it will be shown how this modified sum appears in the toy model, however for now note that in the \ref{height} the maximum height of each change of coordinates function will satisfy a relation that is analogous to the Brjuno sum:
$$
2\pi r y + \mathcal{M}(r,s) - C_{\ref{height}} \leq \mathrm{Im}(Y_{r,s}(x_{max}+iy)) \leq2\pi r y + \mathcal{M}(r,s) + C_{\ref{height}}
$$
Now we in addition introduce a modified herman function as follows:
\[ h^{-1}_{r,s}(y) = \begin{cases} 
        \frac{1}{2}\log y+\frac{1}{2}\log(\frac{s+ry}{s+r}) & 0\leq y\leq (1-s)/r \\
          \frac{1}{2}\log y+\frac{1}{2}(ry +\log(\frac{1}{s+r})-(1-s)) & (1-s)/r\leq y \leq 1/r \\
          
          ry + \mathcal{M}(r,s) - (1-\frac{1}{2}s)&  1/r \leq y
       \end{cases}
    \]
Up to asymptotic size, there is significant flexibility in the choice of these herman functions. We choose this one because it satisfies the additional condition of being a diffeomorphism. Now define $\tilde{h}_n:= h_{\alpha_n,\beta_n}$. Since this function is strictly increasing, it is definitely invertible, however $h_{r,s}$, it's inverse, is complicated to write down. Nonetheless, since it exists:
\begin{definition}
    Say that $(\alpha,\beta)\in((\mathds{R}\setminus\mathds{Q})\cap[-1/2,1/2])\times([-1/2,1/2])$ is of herman* type if $\mathcal{B}(\alpha,\beta)<\infty$ (which in particular implies $\alpha$ is brjuno), and in addition for all $n\geq 0$ there is $m\geq n$ such that:
    $$
    \tilde{h}_{m-1}^{-1}\circ ...\circ \tilde{h}_n^{-1}(0)\geq \mathcal{B}(\alpha_n,\beta_n)
    $$
\end{definition}
\begin{proposition}
    \label{hermanstar}
    $(\alpha,\beta)\in\mathcal{H}* \iff \alpha\in\mathcal{H}$
\end{proposition}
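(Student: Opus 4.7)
The plan is to reduce everything to a direct comparison of the standard and modified iterated compositions starting at $0$. Write $\tilde h_n := h_{\alpha_n,\beta_n}$ and set $z_k = h_{n+k-1}\circ\cdots\circ h_n(0)$ and $\tilde z_k = \tilde h_{n+k-1}\circ\cdots\circ \tilde h_n(0)$. Finiteness of the Brjuno sums is automatic from the previously established two-sided bound $\tfrac{1}{2}\mathcal{B}(\alpha)\leq\mathcal{B}(\alpha,\beta)\leq\mathcal{B}(\alpha)$, so the real content of the proposition is that the two iteration schemes have comparable growth rates.

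First I would record two pointwise estimates. Inverting the piecewise formula for $h_{r,s}^{-1}$ and comparing to $h_r^{-1}(y)=ry+\log(1/r)-1$ on the linear regime, a direct computation gives $h_{r,s}^{-1}(y)\leq h_r^{-1}(y)$ for all $y\geq 1$; matching at the regime transitions extends this to $\tilde h_{r,s}(z)\geq h_r(z)$ for every $z\geq 0$. A further subtraction in the third piece yields the explicit pointwise gap $\tilde h_{r,s}(z)-h_r(z)=\tfrac{1}{2r}\bigl(\log(1+s/r)-s\bigr)$. The easy direction $\alpha\in\mathcal{H}\Rightarrow(\alpha,\beta)\in\mathcal{H}^*$ is then immediate from $\tilde z_k\geq z_k$ combined with $\mathcal{B}(\alpha_n,\beta_n)\leq\mathcal{B}(\alpha_n)$.

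For the converse I would propagate the gap telescopically. Once both iterates lie in the linear regime, $h_r'\equiv 1/r$ and the mean value theorem gives
\[
\tilde z_{k+1}-z_{k+1}=\tfrac{1}{\alpha_{n+k}}(\tilde z_k-z_k)+\tfrac{1}{2\alpha_{n+k}}\bigl(\log(1+\beta_{n+k}/\alpha_{n+k})-\beta_{n+k}\bigr).
\]
Unrolling this recursion, dividing by $z_K\asymp\prod_{k<K}\alpha_{n+k}^{-1}$, and using the identity $\log((s+r)/r)=\log(1/r)-\log(1/(s+r))$, the ratio $\tilde z_K/z_K$ collapses into a Brjuno-type partial sum controlled by $\mathcal{B}(\alpha_n)-\mathcal{B}(\alpha_n,\beta_n)+O(1)$. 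Since $\alpha$ is Brjuno this is finite, giving a uniform bound $\tilde z_K\leq C\,z_K$. Combined with the $\mathcal{H}^*$ hypothesis $\tilde z_K\geq\mathcal{B}(\alpha_n,\beta_n)$, this forces $z_K\geq\mathcal{B}(\alpha_n,\beta_n)/C\geq\mathcal{B}(\alpha_n)/(2C)$, and applying $O(1)$ further standard iterations (each of which multiplies by a factor at least $2$) lifts $z_{K'}$ above $\mathcal{B}(\alpha_n)$, establishing $\alpha\in\mathcal{H}$.

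The main obstacle is the initial phase where either iteration is still in the exponential regime $h_r(z)=e^z$: here the derivative is unbounded by $1/r$ and the telescoping recursion above does not apply verbatim. I would handle this by observing that both iterations enter their respective linear regimes within a uniformly bounded number of steps depending only on $\alpha_n$, checking directly that the discrepancy $\tilde z_k-z_k$ stays bounded by a universal constant during this transient (using monotonicity of $h_{r,s}^{-1}$ in $s$ together with the normalization $h_{r,s}^{-1}(1)=0=h_r^{-1}(1)$), and then absorbing this bounded initial error into the constant $C$ above.
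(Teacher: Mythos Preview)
Your easy direction $(\alpha\in\mathcal{H}\Rightarrow(\alpha,\beta)\in\mathcal{H}^*)$ is correct and matches the paper. The hard direction, however, has a genuine gap in the handling of regime transitions.

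The central problem is the claim that ``both iterations enter their respective linear regimes within a uniformly bounded number of steps depending only on $\alpha_n$'' and then stay there. This is false on both counts. The linear threshold at step $k$ is $\log(1/\alpha_{n+k})$, which depends on the \emph{later} continued-fraction entries, not on $\alpha_n$; since we are not in the bounded-type setting these entries may be arbitrarily large, so no uniform bound on the number of transient steps exists. Moreover, even once $z_k\geq\log(1/\alpha_{n+k})$, a much smaller $\alpha_{n+k+1}$ can force $z_{k+1}<\log(1/\alpha_{n+k+1})$, dropping the iterate back into the exponential regime. So there is no finite ``initial phase'' after which your linear telescoping recursion applies for all subsequent $k$. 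Your auxiliary claim that the additive discrepancy $\tilde z_k-z_k$ stays uniformly bounded during the transient also fails: the thresholds for $h_r$ and $h_{r,s}$ differ (roughly $\log(1/r)$ versus $\tfrac12\log(1/r)$ when $s$ is of order $1$), so at a step where $\tilde z_k$ is already linear but $z_k$ is still exponential, one application produces $\tilde z_{k+1}\approx \alpha_{n+k}^{-1}\tilde z_k$ against $z_{k+1}=e^{z_k}$, and the gap can jump by a factor comparable to $1/\alpha_{n+k}$. Finally, the estimate $z_K\asymp\prod_{k<K}\alpha_{n+k}^{-1}$ that you divide by is not justified either, since the linear recursion carries additive terms $(1-\log(1/\alpha_{n+k}))$ that are large and negative for small $\alpha_{n+k}$.

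The paper sidesteps all of this by running the \emph{inverse} iterations $A_k,B_k$ downward from $\mathcal{B}(\alpha_n)$ and $\mathcal{B}(\alpha_n,\beta_n)$ and observing that every step landing in the logarithmic piece forces $A_k\leq 2B_k+O(1)$, regardless of the prior history. That reset mechanism is precisely what absorbs the repeated regime-switching; your forward scheme has no analogue of it, and without one the argument does not close.
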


\begin{proof}[Proof of $\Leftarrow$]

First we prove the easier $\Leftarrow$ direction:
Note that it is clear that $h_{r,s}^{-1}(y)\leq h^{-1}_r(y)$ for all $s$, and then furthermore since $\mathcal{B}(\alpha,\beta)\leq\mathcal{B}(\alpha)$ we get:
$$
h_{r,s}^{-1}(\mathcal{B}(\alpha,\beta))\leq h_r(\mathcal{B}(\alpha))
$$
and then inductively it is clear that if $\alpha$ is Herman, then for each $\alpha_n$, the $m$ required for the inverse modified herman functions to go below $0$ will be strictly less than that of the original herman functions.
\end{proof}
The $\Rightarrow$ argument is significantly more complex, but essentially boils down to showing that $h_{n+1}^{-1}\circ...\circ h^{-1}_{n+m}(\mathcal{B}(\alpha))\lesssim2\tilde{h}_{n+1}^{-1}\circ...\circ \tilde{h}^{-1}_{n+m}(\mathcal{B}(\alpha,\beta))$. The ``$m>n$" we give may be significantly larger, as we do not optimise the choice of $m$. First we note that whenever $(\alpha,\beta)\in\mathcal{H}*$, not only is there an $m>n$ for each $n$ such that $\tilde{h}_{n+1}^{-1}\circ...\circ \tilde{h}^{-1}_{n+m}(\mathcal{B}(\alpha,\beta))<0$, but we can also take note of whether each $\tilde{h}_k^{-1}$ acts as a ``log" function, or the linear function. In particular define 
$$
B_k(n,m)=\tilde{h}_{n+k}^{-1}\circ...\circ \tilde{h}^{-1}_{n+m}(\mathcal{B}(\alpha_{m},\beta_m))
,$$ then $\tilde{h}_{n+k-1}^{-1}(B_k(n,m))$ will either be above $(1-s)/r$, or below. Now similarly define 
$$
A_k(n,m)=h_{n+k}^{-1}\circ...\circ h^{-1}_{n+m}(\mathcal{B}(\alpha_m))
$$ where now all the herman functions are the uni-critical versions. Note by the argument made in the ``$\Leftarrow$" portion we have 
$$
B_k(n,m)\leq A_k(n,m)
$$
\begin{remark}
    Exactly as in the uni-critical case, if $\alpha$ is of bounded type then no matter what $\beta$ we choose, we obtain that $(\alpha,\beta)$ is herman*, and indeed that $\alpha$ is of course herman.
\end{remark}
\begin{proof}
    The unicritical $h_r$ functions satisfy $h_r(y)\leq y-1$, and furthermore bounded type implies that the Brjuno sum $\mathcal{B}(\alpha)$ is also bounded above.
    This follows because bounded type implies that there exists an $a>0$ such that $\alpha_n\geq a$ for all $n$, which in turn implies that 
    $$
    \mathcal{B}(\alpha)= \sum_{i=0}^{\infty}\left(\prod_{j=0}^{i-1} \alpha_j\right)\log(1/\alpha_i) \leq \log(1/a)\sum_{i=0}^{\infty}(1/2)^{i} 
    $$
    but the RHS is just a geometric sum which converges. This implies that $\mathcal{B}(\alpha,\beta)\leq \mathcal{B}(\alpha)$ is bounded above, and since $h_{r,s}(y)\leq h_r(y)$ for all $s$ we then obtain that both the Herman and Herman* conditions are satisfied.
\end{proof}
\begin{lemma}
\label{herm1}
    Suppose that $B_l(n,m)>(1-\beta_{l+n})/\alpha_{l+n}$ for all $l>k$, but at $k$ we obtain $B_k(n,m)<(1-\beta_{n+k})/\alpha_{n+k}$ with $A_k(n,m)>(1-\beta_{n+k})/\alpha_{n+k}$. Then there exists a uniform constant such that:
    $$
    |h^{-1}_{n+k-1}(A_k(n,m))-\log(A_k(n,m))| <C_{\ref{herm1}}
    $$
\end{lemma}
\begin{proof}
    First we claim that in this particular set up, we get:
    $$
    B_k(n,m)\leq(1-\beta_{n+k})/\alpha_{n+k} \Rightarrow  A_k(n,m)\leq 2(1-\beta_{n+k})/\alpha_{n+k}
    $$
    This follows because as long as $B_l(n,m)>(1-\beta_l)/\alpha_l$, $B_l(n,m)$ is equal to a "partial brjuno sum", in particular there exists some constant $D_{\ref{herm1}}>0$ such that:
    $$
    |B_l(n,m) -\sum_{i=l}^{n+m}\left(\prod_{t=l}^{i-1}\alpha_i\right)\mathcal{M}(\alpha_i,\beta_i)|<D_{\ref{herm1}}
    $$
    Additionally, the same argument at $\beta=0$ yields that in particular $A_k(n,m)<\sum_{i=l}^{n+m}\prod_{t=l}^{i-1}\alpha_i\log (\alpha_i) + D_{\ref{herm1}}$. So since $1/2\log(1/r)\leq\mathcal{M}(r,s)\leq\log(1/r)$, we have that 
    $$
    A_l(n,m)<2B_l(n,m) + 2D_{\ref{herm1}}<2/\alpha_n+2D_{\ref{herm1}}
    $$
    for each $l>k$. Then at $l=k$ note that the function:
    $|\log(y)-\alpha_ky-\log(1/\alpha_k)+1|$ is bounded above on the domain $y\in [1-s/r,2/r+2D_{\ref{herm1}}]$ by basic properties of the logarithm function. This bound gives us the $C_{\ref{herm1}}$ to conclude.
\end{proof}
Now consider the following:
\begin{lemma}
    \label{loglemma}
    For ease of notation define $L_2(x):=1/2\log(x/2)$ and fix $n\geq1$. Now suppose that $y>1$ is chosen such that
    $L_2^{\circ n}(y)\geq 2 $. Then there exists a constant such that:
    $$
    |2L_2^{\circ n}(y)-\log^{\circ n}(2y)| < C_{\ref{loglemma}}
    $$
\end{lemma}
\begin{proof}
    This follows because both logarithm functions have a "contractivity" property as long as $y$ has been chosen such that $L_2^{\circ n}(y)\geq 2 $. In particular this implies that the derivative, which is bounded above in magnitude by $1/y$, will in particular be bounded above by $1/2$. Therefore $L_2^{\circ n}(y) $ will differ from the function $\log^{\circ n}(y)$ by a constant obtained from the geometric sum of $(1/2)^n$, yielding in particular $C_{\ref{loglemma}}=2*1=2$.
\end{proof}
\begin{lemma}
    \label{actualloglemma}
    Suppose we choose $B_k(n,m)$ such that every $B_k(n,m)=\tilde{h}_k^{-1}(B_{k+1}(n,m))$ lies in the ``logarithmic" portion of the functions $\tilde{h}_n^{-1}$. 
    $$
    1/2\log ^{\circ n} (\mathcal{B}(\alpha_m,\beta_m)-C_{\ref{actualloglemma}}\leq B_{k}(n,m)\leq  \log ^{\circ n} (\mathcal{B}(\alpha_m,\beta_m)+ C_{\ref{actualloglemma}}
    $$
\end{lemma}
\begin{proof}
    In the general case the behavior of the ``logarithmic" portion of the $\tilde{h}_n^{-1}$ functions will satisfy:
    $$
    1/2\log(y)\leq\tilde{h}_n^{-1}(y)\leq\log(y)
    $$
    Combining with the above \ref{loglemma}, this then implies that since each composition will also satisfy an inequality under the compositions, we get the result with $C_{\ref{actualloglemma}}=C_{\ref{loglemma}}$.
\end{proof}
\begin{corollary}
\label{asympbound}
    Suppose the following situation does not occur:
    $$
    (B_k(n,m)\leq (1-\beta_k)/\alpha_k\leq 1/\alpha_k\leq A_k(n,m))\cap ( 1/\alpha_k\leq \tilde{N}_{\ref{asympbound}}) 
    $$
    where $\tilde{N}$ is defined in the proof. Then there exists a $C_{\ref{asympbound}}>0$ such that:
    $$
    A_k(n,m) \leq 2 B_k(n,m) + C_{\ref{asympbound}}
    $$.
\end{corollary}
\begin{proof}
    When all $h$ and $\tilde{h}$ functions are in the logarithmic portion this is already known from \ref{actualloglemma}, and conversely for the case when we are purely in the linear portion this follows from the argument given in $\ref{herm1}$. When both cases are mixed this again follows from contractivity properties of both maps as long as $1/\alpha_n\geq4$. This leaves the case where $(B_k(n,m)\leq (1-\beta_k)/\alpha_k\leq 1/\alpha_k\leq A_k(n,m))\cap ( 1/\alpha_k\geq \tilde{N}_{\ref{asympbound}}) $. So define $C_{\ref{asympbound}}=\max(C_{\ref{actualloglemma}},C_{\ref{herm1}})$. In this case we obtain that 
$$
1/\alpha_k \leq A_{k-1}\leq \log(2B_k+2 C_{\ref{asympbound}})
$$
but as long as  $1/\alpha_k\geq \tilde{N}_{\ref{asympbound}}$ this implies by the contractivity of the $\log$ map that:
$A_{k-1}\leq 2B_{k-1}+C_{\ref{asympbound}}$.
\end{proof}
\begin{lemma}
    For any choice of $\alpha_n$ and $\beta_n$ we have that:
    $\tilde{h}_n^{-1}(y)\leq 1/2 y$ which in particular implies that $h_n^{-1}(y)\leq 1/2y$.
\end{lemma}
\begin{proof}
    It is sufficient to prove this for $h_n(y)$ as $\tilde{h}_n\leq h_n$. On the logarithmic portion, this follows as a consequence of basic properties of the log function. Then since the linear portion has gradient that is at most equal to $1/2$, the conclusion follows as it generates a parallel line underneath $1/2y$.
\end{proof}

\begin{proof}[\textbf{Proof of the $\Rightarrow$ portion of Proposition \ref{hermanstar}}]

Let $n'$ be chosen such that $2^{n'} \geq \max(2\tilde N,4+C_{\ref{asympbound}})$. Now the presumption implies that there is some $m\geq n+n'$ such that $B_{n+n'}(n+n',m)\leq 0$. Now $\ref{asympbound}$ implies that as long as the special case 
$$
(B_k(n,m)\leq (1-\beta_k)/\alpha_k\leq 1/\alpha_k\leq A_k(n,m))\cap ( 1/\alpha_k\leq \tilde{N}_{\ref{asympbound}}) 
$$
does not occur we will have that 
$$
A_{n+n'+1}(n+n',m)\leq 2B_{n+n'+1}(n+n',m)+C_{\ref{asympbound}} \leq C_{\ref{asympbound}} +4
$$
which by the choice of $n'$ implies that $A_{n}(n+n',m)\leq 0$ concluding this case. On the other hand, if the special case does occur then we obtain that $A_k(n,m) \leq 2\tilde N$ for some $k\geq n+n'$ which again implies by the choice of $n'$ that $A_{n}(n+n',m)\leq 0$.
\end{proof}
\subsection{Herman Condition in the Toy Model}
In this section we justify our definition - the key step is just the following theorem:
\begin{proposition}
    
\label{hermy}
    There exists a $C_{\ref{hermy}}>0$ such that $\forall y\geq1$:
        $$
        |Y_{r,s}(iy/2\pi)-h_{r,s}^{-1}(y)|\leq C_{\ref{hermy}}
        $$
\end{proposition}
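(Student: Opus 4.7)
The plan is to exploit Lemma \ref{close} to reduce from $Y_{r,s}$ to the explicit model $G_{r,s}$ and then carry out an asymptotic comparison regime-by-regime using the "$g_r$" Lemma \ref{grlem}. By Lemma \ref{close} we have $Y_{r,s}(w)=G_{r,s}(w)-G_{r,s}(0)+O(1)$, so the claim reduces to
\[
\mathrm{Im}\bigl(G_{r,s}(iy/(2\pi))\bigr)-\mathrm{Im}\bigl(G_{r,s}(0)\bigr)=\tfrac{1}{4\pi}\log\tfrac{g_r(iy/(2\pi)+1/2)}{g_r(1/2)}+\tfrac{1}{4\pi}\log\tfrac{g_r(iy/(2\pi)-s/r+1/2)}{g_r(-s/r+1/2)}
\]
agreeing with $h_{r,s}^{-1}(y)$ up to a bounded additive error. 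Each of the two log-ratios is tracked separately: the first corresponds to the $\tfrac{1}{2}\log y$ contribution (the "first critical point" at $x=1/2$), while the second, which vanishes at $s=0$ and becomes large for $s$ away from $0$, supplies the $\tfrac{1}{2}\log((s+ry)/(s+r))$ contribution (the "second critical point" located at angular offset $s$).

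Next I would pin down the baseline values $g_r(1/2)\asymp r$ and $g_r(-s/r+1/2)\asymp s+r$ by expanding $|e^{-3\pi r}-e^{i(2\pi s-\pi r)}|$ to leading order, and then split into three cases matching the piecewise definition of $h_{r,s}^{-1}$. For $1\le y\le (1-s)/r$, Lemma \ref{grlem} and direct expansion of $g_r^{2}$ yield $g_r(1/2+iy/(2\pi))\asymp r(1+y)$ and $g_r(-s/r+1/2+iy/(2\pi))\asymp s+r+ry$, so the sum of the log-ratios reproduces $\tfrac{1}{2}\log y + \tfrac{1}{2}\log\bigl(\tfrac{s+ry}{s+r}\bigr)$ up to $O(1)$. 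For $(1-s)/r\le y\le 1/r$, the first $g_r$-value has already entered its exponential regime $g_r\asymp e^{ry}$ while the second is still in the linear regime, producing the intermediate piece $\tfrac{1}{2}\log y+\tfrac{1}{2}(ry+\log(1/(s+r))-(1-s))$. For $y\ge 1/r$, the second part of Lemma \ref{grlem} gives $g_r(\cdot+iy/(2\pi))\asymp e^{ry}$ uniformly for both branches, yielding the linear piece $ry+f(r,s)-(1-s/2)$.

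The last step is verifying uniformity at the two transition points $y=(1-s)/r$ and $y=1/r$: I would check that the two adjacent formulas for $h_{r,s}^{-1}$ differ by only a bounded constant at the break, and that the $g_r$-asymptotics likewise glue together with errors independent of $r$ and $s$, using the refined bounds from Lemma \ref{grlem} with explicit constants $C_{\ref{grlem}}(t)$ and $D_{\ref{grlem}}(M)$. This last matching is slightly delicate because the paper's definition of the piecewise interpolation was chosen precisely to make these transitions diffeomorphic.

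The main obstacle is case (i), in the sub-regime $s\ll r$: the second log-ratio $\tfrac{1}{2}\log((s+ry)/(s+r))$ is very sensitive to the relative sizes of $s$, $r$ and $ry$, and controlling it uniformly requires a careful second-order expansion of $g_r^2=A^2+B^2-2AB\cos\theta$ with $A=e^{-3\pi r}$, $B=e^{2\pi ru}$, $\theta=2\pi s-\pi r$, in order to show that the correction $|e^{-3\pi r}-e^{i\theta}e^{2\pi ru}|\asymp s+r+ry$ really holds up to a constant independent of $(r,s)$ on the full range $0\le ry\le 1-s$. Once that bound is secured, the remaining regimes follow by the standard estimates from Lemma \ref{grlem} without additional difficulty.
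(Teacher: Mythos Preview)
Your approach is essentially the same as the paper's: reduce to $G_{r,s}-G_{r,s}(0)$ via Lemma~\ref{close}, then estimate each of the two $\log g_r$ ratios regime by regime. The paper streamlines this in two ways. First, it invokes the Remark preceding the proposition to write $h_{r,s}^{-1}=\tfrac{1}{2}(h_r^{-1}+\tilde h_{r,s}^{-1})$ and quotes Cheraghi's unicritical result for the $h_r^{-1}$ half outright, so only the second ratio $\tfrac{1}{2\pi}\log g_r(-s/r+1/2+iy)-\log(1/(s+r))\sim \tilde h_{r,s}^{-1}(y)$ needs to be checked. Second, it collapses your three cases to two by observing that on $y\in[1/(2r),1/r]\supset[(1-s)/r,1/r]$ the log and linear formulas for $\tilde h_{r,s}^{-1}$ already agree up to a uniform constant; this makes the transition-point matching you worry about automatic.

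Your description of the middle regime is where the proposal slips. You say that for $(1-s)/r\le y\le 1/r$ ``the first $g_r$-value has already entered its exponential regime while the second is still in the linear regime''. In fact neither $g_r$ term is exponential there: both satisfy $g_r\asymp ry$ (resp.\ $s+ry$) throughout $1\le y\le 1/r$, and both transition to $g_r\asymp e^{ry}$ only at $y\sim 1/r$. The three-piece definition of $h_{r,s}^{-1}$ does not reflect three asymptotic regimes of the $g_r$ functions; the intermediate break at $(1-s)/r$ is there only to make $h_{r,s}^{-1}$ a diffeomorphism, and the log piece $\log((s+ry)/(s+r))$ and the linear piece differ by $O(1)$ on the whole interval $[(1-s)/r,1/r]$. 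So your Case~2 as stated would not go through as written, though once you realise the two formulas coincide up to $O(1)$ on that range the argument repairs itself immediately. Your identification of the delicate point --- uniformity of $g_r(-s/r+1/2+iy/(2\pi))\asymp s+r+ry$ when $s\ll r$ --- is well taken; the paper simply asserts this estimate with a parenthetical ``(proof with lots of bounds etc)''.
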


\begin{remark}
    Another way of writing the ``special" herman function is:
    \[ h^{-1}_{r,s}(y) = \begin{cases} 
        \frac{1}{2}h^{-1}_r(y)+\frac{1}{2}\log(\frac{s+ry}{s+r}) & 0\leq y\leq (1-s)/r \\
          \frac{1}{2}h^{-1}_r(y)+\frac{1}{2}(ry +\log(\frac{1}{s+r})-(1-s)) & (1-s)/r\leq y
       \end{cases}
    \]
    Indeed, define the ``special" function:
    \[ \tilde h^{-1}_{r,s}(y) = \begin{cases} 
        \log(\frac{s+ry}{s+r}) & 0\leq y\leq (1-s)/r \\
          ry +\log(\frac{1}{s+r})-(1-s) & (1-s)/r\leq y
       \end{cases}
    \]
    then 
    $$
    h^{-1}_{r,s}(y) = \frac{1}{2}(h^{-1}_r(y)+\tilde h^{-1}_{r,s}(y))
    $$
\end{remark}
\begin{proof}[Proof of Proposition \ref{hermy}]
    We proceed using the $G_{r,s}-G_{r,s}(0)$ function, on the line $w=iy$:
$$
G_{r,s}(iy/(2\pi))-G_{r,s}(0)=\frac{i}{4\pi}(\log g_r (iy+1/2) + \log g_r(-s/r+1/2 +iy)) - \tilde{G}(0)
$$
Using the fact that $\tilde{G}(1/2) \sim \mathcal{M}(r,s)$, we can immediately apply the results of Cheraghi to see that:
$$
G_{r,s}(iy/(2\pi))-G_{r,s}(0)\sim \frac{1}{2} h_r^{-1}(y) + \frac{1}{4\pi}\log g_r(-s/r -1/2 +iy) - \frac{1}{2} \log (1/(s+r))
$$
So by the remark, all we have to do in order to conclude is show that 
$$
\frac{1}{2\pi}\log g_r(-s/r -1/2 +iy) - \log (1/(s+r)) \sim \tilde h^{-1}_{r,s}.
$$
We separate into different cases. To simplify, we prove that $\frac{1}{2\pi}\log g_r(-s/r -1/2 +iy) - \log (1/(s+r)) \sim \log(\frac{s+ry}{s+r})$ on $1\leq y\leq1/r$, and $\frac{1}{2\pi}\log g_r(-s/r -1/2 +iy) - \log (1/(s+r)) \sim ry +\log(\frac{1}{s+r})-(1-s)$ on $1/r\leq y<\infty$. This is enough to conclude the result because on $y\in [1/(2r),1/r]$:
$|\log(\frac{s+ry}{s+r}) - (ry +\log(\frac{1}{s+r})-(1-s))| \leq C$.
\\ \textbf{Case 1}: $1\leq  y \leq 1/r$ 
For this case note that for $1\leq  y \leq 1/r$:
$$
|e^{2\pi i s+ry }-e^{-3\pi r}| \asymp |e^{2\pi i s+ry }-1|\asymp|is+ry| \asymp s+ry
$$
\\
which implies that 
$$
\log g_r(-s/r -1/2 +iy) \sim \log(s+ry)
$$
and therefore 
$$
\frac{1}{2\pi}\log g_r(-s/r -1/2 +iy) - \log (1/(s+r)) \sim \log(\frac{s+ry}{s+r})
$$
\\ \textbf{Case 2}: $1/r \leq y < \infty$
This case turns out to be relatively trivial due to the $g_r$ lemma, which shows that for such $y$ values, $g_r(-s/r-1/2+iy)$ will be uniformly close to $e^{2\pi r y}$. Now if we consider the two functions:
$$
|G_{r,s}(iy/(2\pi)) - ry +\log(\frac{1}{s+r})-(1-s)| \sim  |ry - \tilde{G}_{r,s}(1/2) - ry +\log(\frac{1}{s+r})-(1-s)| \sim 0
$$
\end{proof}
\begin{corollary}
    \label{telescopeherm}
Assume that for some integers $m > n\geq 0$, and $y \in (1, +\infty)$, the composition
$h_{\alpha_n,\beta_n}^{-1} \circ  \dots  \circ  h_{\alpha_m,\beta_m}^{-1}(y)$ is defined and is positive. Then, 
\[\big| 2\pi \Im \left( \mathds{Y}_{n} \circ \dots \circ \mathds{Y}_m( i  y/(2\pi))\right) - h_{\alpha_n,\beta_n}^{-1} \circ  \dots  \circ  h_{\alpha_m,\beta_m}^{-1}(y)\big| 
\leq C_{\ref{telescopeherm}}\]
\end{corollary}
\begin{proof}
    Because both functions satisfy a kind of contractivity property, the telescoping sum that one can define can only expand the initial constant $C_{\ref{hermy}}$ by a factor defined by the geometric sum of the contractivity factor, $0.9^n$. Thus we obtain the result by setting $C_{\ref{telescopeherm}}:=10C_{\ref{hermy}}$. For specific details see \cite{cheraghi2022arithmetic}.
\end{proof}
Furthermore, in order to use this fact to link together the topology of the model and the Herman condition, it is necessary to find some \textit{uniformity} results in the behavior of the function. Note that if we replace $Y_{r,s}$ in the following lemma with $Y_{r,0}=Y_r$, Cheraghi's paper \cite{cheraghi2022arithmetic} proves this.
\begin{lemma}
\label{herman2}
For all \( r \in (0, 1/2] \) and $s\in[0,1/2]$, we have
\begin{itemize}
    \item[(i)] \textit{for all \( x \in [0, 1/r] \) and all \( y \geq -1 \) there exists a $C^1_{\ref{herman2}}>0$,}
    \[
    \mathrm{Im}\, Y_{r,s}(x + iy) \geq \mathrm{Im}\, Y_{r,s}(iy) - C^1_{\ref{herman2}};
    \]
    
    \item[(ii)] \textit{for all \( x \in [0, 1/r] \) and all \( y_1 \geq y_2 \geq -1 \) there exists a $C^2_{\ref{herman2}}>0$,}
    \[
    \mathrm{Im}\, Y_{r,s}(x + i y_1) - \mathrm{Im}\, Y_{r,s}(x + i y_2) 
    \leq \mathrm{Im}\, Y_{r,s}(i y_1) - \mathrm{Im}\, Y_{r,s}(i y_2) + C^2_{\ref{herman2}};
    \]
    
    \item[(iii)] \textit{for all \( y_1 \geq y_2 \geq -1 \) and \( y \geq 0 \) there exists a $C^3_{\ref{herman2}}>0$,}
    \[
    \mathrm{Im}\, Y_{r,s}(i y + i y_1) - \mathrm{Im}\, Y_{r,s}(i y + i y_2) 
    \leq \mathrm{Im}\, Y_{r,s}(i y_1) - \mathrm{Im}\, Y_{r,s}(i y_2) +C^3_{\ref{herman2}};
    \]
    
    \item[(iv)] \textit{for all \( y_1 \geq y_2 \geq -1 \) and \( y \in [0, 5/\pi] \) there exists a $C^4_{\ref{herman2}}>0$,}
    \[
    \mathrm{Im}\, Y_{r,s}(i y_1) - \mathrm{Im}\, Y_{r,s}(i y_2) 
    \leq \mathrm{Im}\, Y_{r,s}(i y + i y_1) - \mathrm{Im}\, Y_{r,s}(i y_1 + i y_2) + C^4_{\ref{herman2}}.
    \]
\end{itemize}
\end{lemma}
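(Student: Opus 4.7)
The strategy is to reduce each of the four inequalities to the corresponding estimate for Cheraghi's unicritical change of coordinates $Y_r$ (already proved in \cite{cheraghi2022arithmetic}), using the explicit decomposition
\[
G_{r,s}(w) - G_{r,s}(0) = \tfrac{1}{2}\bigl(Y_r(w) - Y_r(0)\bigr) + \tfrac{1}{2}\bigl(Y_r(w - s/r) - Y_r(-s/r)\bigr)
\]
together with the uniform closeness $|Y_{r,s}(w) - (G_{r,s}(w) - G_{r,s}(0))| \leq C_{\ref{close}}$ from Lemma \ref{close}. The plan is: (a) establish each bound first for $\mathrm{Im}\,G_{r,s}$ by averaging Cheraghi's inequality for $\mathrm{Im}\,Y_r$ at the point and its shift by $-s/r$; (b) transfer to $\mathrm{Im}\,Y_{r,s}$, absorbing a bounded additive error.

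For step (a), I use (i) as a model: Cheraghi's analogous bound reads $\mathrm{Im}\,Y_r(x+iy) \geq \mathrm{Im}\,Y_r(iy) - 1/(2\pi)$ and, by the $1/r$-periodicity of $\mathrm{Im}\,Y_r$, is valid for every $x \in \mathbb{R}$. Applying it at both $x+iy$ and $x+iy - s/r$ and averaging the imaginary-part identity above yields
\[
\mathrm{Im}\,G_{r,s}(x+iy) \geq \mathrm{Im}\,G_{r,s}(iy) - 1/(2\pi),
\]
with the constant unchanged by averaging. The same averaging argument transplants to (ii)--(iv) because all of Cheraghi's hypotheses constrain only the imaginary parts $y, y_1, y_2$ and are therefore insensitive to the real shift $-s/r$.

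For step (b), writing $Y_{r,s}(w) = (G_{r,s}(w) - G_{r,s}(0)) + O(C_{\ref{close}})$ transfers each estimate at the cost of an additive $2C_{\ref{close}}$. In (ii)--(iv) both sides are differences of imaginary parts, so the $G_{r,s}(0)$ normalisation cancels; in (i) it cancels after subtracting $\mathrm{Im}\,Y_{r,s}(iy)$ from both sides. The stated constants $1/(2\pi)$, $1/(4\pi)$, $5/\pi$ are then to be read as universal slacks that absorb $2C_{\ref{close}}$ (not as optimal constants). One also has to verify that the piecewise adjustments in $\tilde{Y}^{\text{pre}_2}$ preserve the monotonicity: each adjustment replaces $G_{r,s}$ by a translate on a strip of width at most $1$, and the $0.9$-contractivity of $G_{r,s}$ bounds the cumulative discrepancy by a geometric series, so this step is book-keeping.

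The main obstacle is item (iv), whose bound runs in the reverse direction to (ii) and (iii): it asserts that a difference cannot shrink too much under a vertical shift by $y \in [0, 5/\pi]$. Here averaging alone is not enough, because I need a \emph{lower} bound on $\mathrm{Im}\,Y_{r,s}(iy+iy_1) - \mathrm{Im}\,Y_{r,s}(iy+iy_2)$ that is uniform in the compact parameter range. I would combine Cheraghi's own analog of (iv) for $Y_r$, applied at both $iy_j$ and $iy_j - s/r$, with Proposition \ref{maxmin} to control the location and size of the extrema of $\mathrm{Im}\,G_{r,s}$ on vertical lines up to a universal constant; the compactness of the range $y \in [0, 5/\pi]$ is used precisely to close the uniformity, which is why that restriction appears only in (iv).
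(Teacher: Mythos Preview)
Your averaging strategy is exactly what the paper uses for parts (iii) and (iv): both follow from the decomposition $\mathrm{Im}\,G_{r,s}(w)=\tfrac12\bigl(\mathrm{Im}\,Y_r(w)+\mathrm{Im}\,Y_r(w-s/r)\bigr)+C(r,s)$ together with Cheraghi's corresponding inequalities for $Y_r$, after which Lemma~\ref{close} transfers the bound to $Y_{r,s}$. Your concern that (iv) is a ``reverse'' inequality requiring extra machinery is misplaced --- it averages just like (iii), because both sides of (iv) live on a single vertical line and the shift by $-s/r$ appears symmetrically on left and right.

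There is, however, a real gap in your handling of (i), and the same defect carries to (ii). Averaging Cheraghi's (i) at $x+iy$ and at $(x-s/r)+iy$ yields
\[
\tfrac12\bigl(\mathrm{Im}\,Y_r(x+iy)+\mathrm{Im}\,Y_r(x-s/r+iy)\bigr)\ \ge\ \mathrm{Im}\,Y_r(iy)-\tfrac{1}{2\pi},
\]
whereas the right-hand side you actually need is $\tfrac12\bigl(\mathrm{Im}\,Y_r(iy)+\mathrm{Im}\,Y_r(-s/r+iy)\bigr)$. The discrepancy is $\tfrac12\bigl(\mathrm{Im}\,Y_r(-s/r+iy)-\mathrm{Im}\,Y_r(iy)\bigr)$, and since $-s/r$ can sit near the \emph{maximum} of $x\mapsto\mathrm{Im}\,Y_r(x+iy)$ (take $s$ near $1/2$ and $r$ small) this quantity is of order $\log(1/r)$ and is \emph{not} absorbed by any universal constant. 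Your sentence ``the hypotheses constrain only the imaginary parts \dots and are therefore insensitive to the real shift'' is precisely where the argument breaks: in (i) and (ii) the reference point on the right is pinned at $x=0$ for $Y_r$, and that does not translate to $x=0$ for the shifted copy $Y_r(\cdot-s/r)$. The paper does not average for (i); it invokes Corollary~\ref{maxminy} directly, which records that $x=0$ is (up to a universal constant) a minimiser of $x\mapsto\mathrm{Im}\,Y_{r,s}(x+iy)$ --- a fact established earlier from the symmetry of $\mathrm{Im}\,G_{r,s}$ about $x=s/(2r)-1/2$, not by averaging. For (ii) the paper likewise abandons pure averaging and reduces, via an additional approximation, to a single $\log g_r$ term to which Cheraghi's (ii) applies directly.
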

\begin{proof}
    We prove each item separately.
\\
     \textbf{Proof of (i)}:
    \\ This result is an immediate corollary of the result \ref{maxminy} proven in chapter 2 that $\mathrm{Im}Y_{r,s}(iy)$ is always uniformly close to the minimum of the function $x\mapsto \mathrm{Im}Y_{r,s}(x+iy)$, so we can simply set $C^1_{\ref{herman2}}=C_{\ref{maxminy}}$.
\\
\textbf{Proof of (ii)}:
    \\ We proceed by proving that there exists a constant which implies the result for $G_{r,s}$, which directly implies that the result will follow for $Y_{r,s}$ with a larger constant. Now note by the results of Cheraghi, (ii) will hold for the function $\log g_r(x+iy)$. To see this we can note that from the explicit form of $g_r$
    $$
    \tilde{g}(x,y):=(g_r(x+iy))^2= (e^{2\pi r y})^2 - 2 e^{2\pi r y-3\pi r}\cos(2\pi r x) +(e^{-3\pi r})^2
    $$
    so the derivative with respect to $y$ will be:
    $$
    \partial_y\tilde{g}(x,y)=4\pi r\, e^{2\pi r y}
\left(
e^{2\pi r y}
- e^{-3\pi r}\cos(2\pi r x)
\right).
    $$
    which is clearly always positive on $y\geq -1$, and furthermore is uniquely maximized for each $y$ at $x=0$. Now to translate this to the $G_{r,s}(w)$ function we note that because this function has a symmetry across the line $x=s/2r$, if $x\geq s/2r$ we have that the derivative of $\mathrm{Im}G_{r,s}(x+iy)$ with respect to $y$ is smaller than the derivative of the function $\log g_r(s/2r+iy-1/2) + \log g_r(x+iy-1/2-s/2r)$
    But the derivative of this function is clearly smaller than the derivative of the function $\log g_r(s/2r+iy-1/2) + \log g_r(iy-1/2)$ which in turn is within a constant of $G_{r,s}(iy)$. Because the difference will now be the integral of these derivatives, this now gives the result.
\par
    \textbf{Points (iii), and (iv)} actually follow directly from:
    $$
    Y_{r,s}(w) \sim G_{r,s}(w) = \frac{1}{2}(Y_r(w)+Y_r(w-s/r) )+C(r,s)
    $$
    and the fact that Cheraghi proved this lemma for $Y_{r}$ \cite{cheraghi2022arithmetic}. To give a quick argument as to why Cheraghi's case was true, once again check the derivatives with respect to $y$ of $g_r$.
\end{proof}
\section{Topological Trichotomy in the Model}
\subsection{Height Functions}
Define $b^j_n(x) = \min\{y | x+iy \in I^j_n$\}. Since $\mathds{Y}_n$ sends vertical lines to vertical lines, note that:
$$
I^j_n = \{w \in \mathds{C} | 0\leq \mathrm{Re}(w) \leq 1/\alpha_n,\mathrm{Im}(w)\geq b^j_n(\mathrm{Re}(w))\}
$$
This function is continuous, and will satisfy $b_n^{j+1}\geq b^j_n$. Define $b_n : [0,1/\alpha_n] \rightarrow [-1,+\infty]$ as:
$$
b_n(x) = lim_{j\rightarrow +\infty}b^j_n(x) = sup_{j\geq1}b_n^j(x)
$$
In particular:
$$
I_n=\{w\in\mathds{C} | 0\leq \mathrm{Re}(w)\leq 1/\alpha_n,\mathrm{Im}(w)\geq b_n(\mathrm{Re}(w))\}
$$
\begin{proposition}[Accumulation of the hairs]
    For all $n\geq -1$, we have:
    \begin{enumerate}
        \item for all $x \in [0,1/\alpha_n)$, $\liminf_{t\rightarrow x^+}b_n(s)=b_n(x)$
        \item for all $x \in [0,1/\alpha_n)$, $\liminf_{t\rightarrow x^-}b_n(s)=b_n(x)$
    \end{enumerate}
\end{proposition}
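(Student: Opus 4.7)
The plan is to establish the two inequalities $\liminf_{t \to x^{\pm}} b_n(t) \geq b_n(x)$ and $\liminf_{t \to x^{\pm}} b_n(t) \leq b_n(x)$ separately, working at a fixed $n \geq -1$ and $x \in [0, 1/\alpha_n)$. The lower bound is the soft direction. Each finite-stage height function $b_n^j : [0, 1/\alpha_n] \to [-1, \infty)$ is continuous, since the inductive definition of $I_n^{j+1}$ expresses it as a union of translates of $\mathds{Y}_{n+1}(I_{n+1}^j)$, and $\mathds{Y}_{n+1}$ sends vertical lines to vertical lines continuously. Since $b_n = \sup_j b_n^j$, given $\varepsilon > 0$ I would pick $j$ with $b_n^j(x) \geq b_n(x) - \varepsilon$, invoke continuity of $b_n^j$ to get $b_n^j(t) \geq b_n(x) - 2\varepsilon$ on a one-sided neighborhood of $x$, and then use $b_n \geq b_n^j$ pointwise to conclude $\liminf_{t \to x^{\pm}} b_n(t) \geq b_n(x) - 2\varepsilon$. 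Sending $\varepsilon \to 0$ closes this direction for both sides at once.

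The upper bound is the substantive direction, and here I would exploit the contractivity of $\mathds{Y}_{n+1}$ with factor $0.9$ from item (5) of the proposition defining $Y_{r,s}$. The key observation is that the difference $b_n^{j+1} - b_n^j$ is supported on short horizontal intervals whose total widths shrink geometrically in $j$: each new modification at step $j+1$ is inherited from a modification of $b_{n+1}^{j+1} - b_{n+1}^j$ pushed through the contracting piecewise-analytic map $\mathds{Y}_{n+1}$. Unwinding this inductively, the modifications contributing to $b_n - b_n^j$ live in patches of horizontal width $O(\alpha_{n+1} \alpha_{n+2} \cdots \alpha_{n+j})$, which tends to zero geometrically. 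In particular, off a Cantor-type subset of $[0, 1/\alpha_n]$ one has $b_n(x) = b_n^j(x)$ for some finite $j$, so the exceptional points where $b_n$ is strictly larger than every $b_n^j$ are sparse.

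For such an exceptional point $x$, I would construct a sequence $t_k^+ \to x^+$ (and symmetrically $t_k^- \to x^-$) as follows. For each level $j$ the point $x$ lies inside a peak interval of horizontal extent tending to zero, and on the immediate right of this interval there is a valley interval on which $b_n$ coincides with $b_n^{j-1}$ up to the $O(0.9^{j})$ tail from deeper modifications. Picking $t_j^+$ in that valley, one gets $t_j^+ \to x^+$ together with
\[
b_n(t_j^+) \leq b_n^{j-1}(t_j^+) + O(0.9^j).
\]
Continuity of $b_n^{j-1}$ at $x$ and $b_n^{j-1}(x) \leq b_n(x)$ then yield $\limsup_j b_n(t_j^+) \leq b_n(x)$, hence $\liminf_{t \to x^+} b_n(t) \leq b_n(x)$. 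The analogous choice on the left handles $x^-$.

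The main obstacle is making the notion of ``a valley immediately to the right (or left) of the $j$-level peak'' precise and verifying that it exists at every level from both sides. This requires carefully enumerating the piecewise definition of $\mathds{Y}_{n+1}$ and checking that at each translate index $l$ appearing in the union defining $I_n^{j+1}$, there is a genuine gap between adjacent images $\mathds{Y}_{n+1}(I_{n+1}^{j+1}) + l$ on which $b_n^{j+1}$ agrees with $b_n^j$. The uni-critical case of this argument is carried out in \cite{cheraghi2022arithmetic}; the bi-critical extra branches in the definition of $Y_{r,s}$ do not affect the horizontal scale of modifications, so I expect the argument to transfer with only bookkeeping changes to account for the symmetry across $s/2r - 1/2$.
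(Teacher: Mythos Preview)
Your lower-bound direction via lower semicontinuity of $b_n=\sup_j b_n^j$ is fine. The upper-bound direction, however, has a real gap. The assertion that ``off a Cantor-type subset one has $b_n(x)=b_n^j(x)$ for some finite $j$'' is false: since $y\mapsto \mathrm{Im}\,Y_{r,s}(x+iy)$ is strictly increasing and $Y_{r,s}(\overline{\mathds{H}'})\subset\mathds{H}'$, one has $b_n^{j+1}(x)>b_n^j(x)$ strictly for every $x$ and every $j$, so $b_n>b_n^j$ everywhere. Your valley estimate $b_n(t_j^+)\leq b_n^{j-1}(t_j^+)+O(0.9^j)$ therefore cannot be read off from any stabilisation of the finite stages; the tail $b_n-b_n^{j-1}$ is strictly positive everywhere and can be arbitrarily large (indeed infinite when $\alpha\notin\mathcal{B}$), so a level-$j$ valley gives no control on the full $b_n$ there. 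The obstacle you flag at the end is not bookkeeping but the heart of the matter.

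The paper's argument bypasses finite-stage truncations entirely. It pulls $x_m+ib_m(x_m)$ back through the grand change of coordinates $\mathcal{Y}_{m,n}$ to a deep level $n$, landing on $x_n+ib_n(x_n)$. There the integer-translate structure of $I_n$ (coming from the union $\bigcup_l(\mathds{Y}_{n+1}(\cdot)+l)$) gives points $x'_n,x''_n$ on either side of $x_n$, within horizontal distance $1$, with $b_n(x'_n)=b_n(x''_n)=b_n(x_n)$ \emph{exactly}. Pushing these forward through $\mathcal{Y}_{m,n}$ and using the $0.9^{\,n-m}$ contraction produces points $x'_m,x''_m$ with $|x'_m-x_m|,|x''_m-x_m|\leq 0.9^{\,n-m}$ and $|b_m(x'_m)-b_m(x_m)|,|b_m(x''_m)-b_m(x_m)|\leq 0.9^{\,n-m}$. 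Letting $n\to\infty$ gives the one-sided upper bounds on $\liminf$. The point is that the contraction acts on the full limit function $b$, not on its approximants, so no tail estimate is needed.
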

\begin{proof}
    To tackle this proof in a succinct manner, we use the so called ``grand change of coordinates" $\mathcal{Y}_n^m:\mathds{H}'_[0,1/\prod_{i=m}^n\alpha_i]\rightarrow \mathds{H}'[0,1/\alpha_{m-1}]$. Since these are bijections, for all $x_m+ib_m(x_m)\in I_m$ there exists $x_n+ib_n(x)\in I_n$ that maps to $x_m+ib_m(x_m)$ under $\mathcal{Y}_{n,m}$. Then it is a direct consequence of the ``large" and ``small" functional relations of $\mathcal{Y}_n^m$ that there exists $z'_n=x'_n+ib_n(x'_n) \in I_n$ such that $b_n(x'_n)=b_n(x_n)$ and $0<x_n-x'_n<1$, and similarly on the other side there will exist an $z''_n=x''_n+ib_n(x''_n)\in I_n$ such that $0<x'_n-x_n<1$ and $b_n(x''_n)=b_n(x_n)$. Then define $x'_m$ and $x''_m$ by:
    \begin{equation*}
        \begin{aligned}
            x'_m+ib_m(x'_m)= \mathcal{Y}_{n,m}(x'_n+ib_n(x'_n)) & & x''_m+ib_m(x''_m)= \mathcal{Y}_{n,m}(x''_n+ib_n(x''_n))
        \end{aligned}
    \end{equation*}
    Due to the contraction factor of $0.9^n$ that $\mathcal{Y}_{n,m}$ enjoys it is clear that for all $\delta$ we can select $n$ such that $|x'_m-x_m|\leq \delta$, $|x''_m-x_m|\leq \delta$ whilst $b_m(x'_m) \leq b_n(x_m)+\delta$, $b_m(x''_m) \leq b_n(x_m)+\delta$. The use of this $\mathcal{Y}_{n,m}$ function has obscured some combinatorial details, which can be read up on in \cite{cheraghi2022arithmetic} should the reader wish to know more.
\end{proof}
\begin{proposition}
    There exists a uniform constant $C$ for all $\alpha \in \mathds{R}/\setminus{\mathds{Q}}$, and all $n\geq-1$ we have:
    $$
    |\mathcal{B}(\alpha_{n+1},\beta_{n+1})-2\pi\sup_{x \in [0,1/\alpha_n]}b_n(x)|\leq C
    $$

\end{proposition}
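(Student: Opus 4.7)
The plan is to establish a one-step recursion relating $M_n^j := \sup_{x \in [0, 1/\alpha_n]} b_n^j(x)$ to $M_{n+1}^j$, then iterate it and sum the cumulative errors as a convergent geometric series. The nested containment $I_n^{j+1} \subset I_n^j$ gives $b_n^{j+1} \geq b_n^j$, so $M_n^j$ increases to $M_n = \sup_x b_n(x)$ as $j \to \infty$, and the one-step estimate on approximants will pass to the limit.

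Since $\mathds{Y}_{n+1}$ sends vertical lines to vertical lines with horizontal scaling $\alpha_{n+1}$, one has
\[
M_n^{j+1} = \sup_{x' \in [0, 1/\alpha_{n+1}]} \mathrm{Im}\bigl(\mathds{Y}_{n+1}(x' + i\, b_{n+1}^j(x'))\bigr).
\]
For the upper bound, Proposition \ref{height} combined with Corollary \ref{maxminy} yields $\mathrm{Im}(Y_{r,s}(x+iy)) \leq 2\pi r y + \mathcal{M}(r,s) + C_1$ uniformly in $x$ and $y \geq -1$; taking the supremum gives $M_n^{j+1} \leq 2\pi \alpha_{n+1} M_{n+1}^j + \mathcal{M}(\alpha_{n+1}, \beta_{n+1}) + C_1$.

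The lower bound is the main technical step. The plan is to pick $x^* \in [0, 1/\alpha_{n+1}]$ where $b_{n+1}^j(x^*) = M_{n+1}^j$, and then exploit the periodicity $w \in I_{n+1}^j \iff w+1 \in I_{n+1}^j$ across the interior (a direct consequence of (F1), recorded in the corollary immediately following the construction of the $I_n^j$) to find an integer translate $x' = x^* + m$ within horizontal distance $1/2$ of $x_{\max, n+1} := (1+\beta_{n+1})/(2\alpha_{n+1}) - 1/2$; the inequality $a_n \geq 2$ ensures at least one full interior period is available no matter where $x^*$ sits. Applying Proposition \ref{height} at $x_{\max, n+1}$ and transferring the estimate to $x'$ via the $0.9$-Lipschitz property of $\mathds{Y}_{n+1}$ yields $M_n^{j+1} \geq 2\pi \alpha_{n+1} M_{n+1}^j + \mathcal{M}(\alpha_{n+1}, \beta_{n+1}) - C_2$. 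Combined with the upper bound, one obtains the one-step recursion with universal error $C_0 := \max(C_1, C_2) + O(1)$.

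Letting $j \to \infty$ and iterating the recursion $k$ times gives
\[
2\pi M_n = \Bigl(\prod_{i=1}^{k} \alpha_{n+i}\Bigr)(2\pi M_{n+k}) + \sum_{i=1}^{k} \Bigl(\prod_{l=1}^{i-1} \alpha_{n+l}\Bigr) \mathcal{M}(\alpha_{n+i}, \beta_{n+i}) + E_k,
\]
with $|E_k| \leq C_0 \sum_{i \geq 0} \prod_{l=1}^{i} \alpha_{n+l} \leq 2 C_0$ because every $\alpha_l < 1/2$. The Brjuno hypothesis, together with the already-established upper bound $2\pi M_{n+k} \lesssim \mathcal{B}(\alpha)$, forces the first term to decay geometrically to $0$, while the middle sum converges to $\mathcal{B}(\alpha_{n+1}, \beta_{n+1})$. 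Passing to the limit produces $|2\pi M_n - \mathcal{B}(\alpha_{n+1}, \beta_{n+1})| \leq 2C_0$, which is the claim. The principal obstacle will be the lower-bound step: verifying that the at-most $1/2$ horizontal mismatch between the chosen translate of $x^*$ and $x_{\max, n+1}$ genuinely costs only an $O(1)$ loss in $\mathrm{Im}\, \mathds{Y}_{n+1}$, and simultaneously handling the one non-periodic end-of-domain piece coming from (F2) so that the constants remain uniform in $\alpha$ and $\beta$.
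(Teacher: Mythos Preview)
Your proposal is correct and follows essentially the same route as the paper: derive a one-step recursion for the suprema via Proposition~\ref{height} and the periodicity of $b_n^j$, telescope with geometrically summable error, and pass to the limit. The only noteworthy difference is bookkeeping: the paper iterates at the finite-$j$ level, tracking $D_{n+k}^{\,j-k}$ so that after $j$ steps it lands on $D_{n+j}^{0}$, which is trivially $O(1)$ since $b_m^0\equiv -1$, and only then lets $j\to\infty$; you instead pass to the limit first and then iterate on the $M_n$, which forces you to appeal to a bound on $M_{n+k}$ you call ``already-established'' but which is in fact part of what is being proved. This is not a real gap---running only the upper-bound half of your recursion at finite $j$ immediately yields $M_n\le \mathcal{B}(\alpha_{n+1},\beta_{n+1})+O(1)$ and closes the loop---but the paper's ordering sidesteps the circularity more cleanly.
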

\begin{proof}

For $n\geq -1$ and $j\geq 0$ define
    $$
    D^j_n = \max\{b^j_n(x) | \; x  \in [0,1/\alpha_n]\}
    $$
    First show the following recursive relations:
    $$
    2\pi\alpha_n D^{j-1}_n + \mathcal{M}(\alpha_n,\beta_n) - C \leq 2\pi D^j_{n-1}\leq 2\pi\alpha_n D^{j-1}_n + \mathcal{M}(\alpha_n,\beta_n) + C
    $$
    Since $b^j_{n-1}$ and $b^{j-1}_n$ are periodic of period +1, we may choose $x_{n-1} \in [x_m(\alpha_{n-1},\beta_{n-1})-1/2,x_m(\alpha_{n-1},\beta_{n-1})+1/2]$ and $x_{n} \in [x_m(\alpha_{n},\beta_{n})-1/2,x_m(\alpha_{n},\beta_{n})+1/2]$ such that $b^j_{n-1}(x_{n-1}) = D^j_{n-1}$ and $b^{j-1}_n(x_n) = D^{j-1}_n$. Choose $x'_n \in [0,1/\alpha_n]$ such that $-\epsilon_n\alpha_nx'_n \in x_{n-1} + \mathds{Z}$. By (\ref{maxminy}) we must have $x'_n \in [x_m(\alpha_{n},\beta_{n})-1/2,x_m(\alpha_{n},\beta_{n})+1/2]$. Apply \ref{height}with $y = b^{j-1}_n(x'_n)$ and $x=x'_n$ to obtain:
    \begin{equation*}
        \begin{aligned}
            2\pi\alpha_n D^{j-1}_n + \mathcal{M}(\alpha_n,\beta_n) = 2\pi\alpha_n b^{j-1}_n(x_n) + \mathcal{M}(\alpha_n,\beta_n) \geq &  2\pi\alpha_nb^{j-1}_n(x'_n)+\mathcal{M}(\alpha_n,\beta_n) \\
              \geq& 2\pi \mathds{Y}_n(x'_n + ib^{j-1}_n(x'_n))-C_{\ref{height}} \\
              =&2\pi b^j_{n-1}(x_{n-1})-C_{\ref{height}} \\
              =&2\pi D^j_{n-1}-C_{\ref{height}} 
        \end{aligned}
    \end{equation*}
    here using the upper bound. Similarly for the lower bound we have:
    \begin{equation*}
        \begin{aligned}
            2\pi\alpha_n D^{j-1}_n + \mathcal{M}(\alpha_n,\beta_n) = 2\pi\alpha_n b^{j-1}_n(x_n) + \log 1/\alpha_n \leq &  2\pi\mathrm{Im}\mathds{Y}_n(x_n+b^{j-1}_n(x_n))+C \\
              =&2\pi b^j_{n-1}(x_{n-1})+C \\
              =&2\pi D^j_{n-1}+C 
        \end{aligned}
    \end{equation*}
Now let $A_{n+i}(\alpha_{n+1}) = \prod_{l=1}^i\alpha_{n+l}$ for $i\geq1$, and $A_n(\alpha_{n+1}) = 1$. Then let:
    $$
    X_k = 2\pi A_{n+k}(\alpha_{n+1})D^{j-k}_{n+k} + \sum_{i=1}^kA_{n+i-1}(\alpha_{n+1}) \mathcal{M}(\alpha_{n+i},\beta_{n+i})
    $$
    Thus:
    $2\pi D^j_n = \sum_{k=0}^{j-1}(X_k - X_{k+1}) + X_j$. Now using the bound, we have:
    \begin{equation*}
        \begin{aligned}
            |X_k - X_{k+1} |&= |A_{n+k}(\alpha_{n+1})(2\pi D^{j-k}_{n+k}-2\pi\alpha_{n+k+1}D^{j-k-1}_{n+k+1}-\mathcal{M}(\alpha_{n+k+1},\beta_{n+1+1})|\\
            &\leq A_{n+k}(\alpha_{n+1})C
        \end{aligned}
    \end{equation*}

    Additionally we have:
    $$
    |X_j - \sum_{i=1}^jA_{n+i-1}(\alpha_{n+1})\mathcal{M}(\alpha_{n+i},\beta_{n+i})| = |2\pi A_{n+j}(\alpha_{n+1})D^0_{n+j}| \leq 2\pi
    $$
    Combining this together we get:
    \begin{equation*}
        \begin{aligned}
            |2\pi D^j_n -\sum_{i=1}^j A_{n+i-1}(\alpha_{n+1}) \mathcal{M}(\alpha_{n+i},\beta_{n+i})| \leq &  \sum_{k=0}^{j-1} A_{n+i-1}(\alpha_{n+1}) 4+2\pi \\
              \leq&\sum_{k=0}^{j-1}2^{-k}C+ 2\pi \leq 8C +2\pi
        \end{aligned}
    \end{equation*}
    By the containment of sets, $b^j_n\geq b^{j-1}_n$, which implies $D^j_n\geq D^{j-1}_n$. Therefore, for each fixed $n$ $D^j_n$ forms an increasing sequence. Hence:
    $$
    |2\pi\lim_{j\rightarrow +\infty}D^j_n - \mathcal{B}(\alpha_{n+1},\beta_{n+1})| \leq 4C + 2\pi
    $$
    Note that because $b^j_n\leq b_n$ and $b^j_n \rightarrow b_n$ pointwise, we must have $\sup_{x\in[0,1/\alpha_n]}b_n = \lim_{j\rightarrow+\infty}D^j_n$. Overall this has the consequence that just as in the uni-critical case, the supremum of the base functions is infinite if and only if $\mathcal{B}(\alpha_{n+1})=\infty$, since we have the inequality:
    $$
    \frac{1}{2}\mathcal{B}(\alpha_{n+1})\leq\mathcal{B}(\alpha_{n+1},\beta_{n+1})\leq \mathcal{B}(\alpha_{n+1})\qedhere
    $$
\end{proof}

This in particular proves that (some of the) heights of the $b_n$ will go to infinity if and only if $\mathcal{B}(\alpha,\beta)=\infty$.

\subsection{When is $\mathds{A}_{[\alpha],[\beta]}$ a Jordan Curve?}
This question is equivalent to determining when the set of points defined by $b_n(x)$ is a jordan curve, which follows if $b_n$ is continuous. To answer this question we introduce the peak functions, $p^j_n(x)$. The point of these peak functions is that they will describe the infinum of the interiors of $I_n$:
$$
p^0_n(x) := \sup_{x\in[0,1/\alpha_n]}b_n(x)+\tilde{C} \equiv \frac{1}{2\pi}\mathcal{B}(\alpha_{n+1},\beta_{n+1})+2C_{\ref{height}}
$$
where $\tilde{C}$ is just chosen such that the second constant $2C_{\ref{height}}$ is attained. Now inductively define $p^{j+1}_n$ as:
$$
p^{j+1}_n(x):= \mathrm{Im} \mathds{Y}_{n+1}(x_{n+1}+ip^{j}_{n+1}(x_{n+1}))
$$
where we choose $x_n$ and $l_n$ such that $\epsilon_{n+1}\alpha_{n+1}x_{n+1}=x_n-l_n$. In other words, the graph of $p^{j+1}_n$ is generated from applying $\mathds{Y}_n$ to the graph of $p^j_{n+1}$, and then applying translations of integers. 
\begin{remark}
    In the language of the grand change of coordinates, we have:
    $$
p^{j}_n(x):= \mathrm{Im} \mathcal{Y}_{n,n+j}(x_{n+j}+ip^0_{n+j}(x_{n+j}))
$$
\end{remark}
For all $n\geq-1$ and $j\geq0$, we have $p^{j+1}_n(x+1)=p^{j+1}_n(x)$ for $x\in[0,1/\alpha_n-1]$. Furthermore, it is clear that each $p^j_n$ will be continuous, and $p^j_n(0)=p^j_n(1/\alpha_n)$. Now by \ref{height}:
\begin{equation*}
    \begin{aligned}
        p^1_n\leq &\max_{x\in[0,1/\alpha_{n+1}]}\mathrm{Im}\mathds{Y}_{n+1}(x+i(\mathcal{B}(\alpha_{n+2},\beta_{n+2})+2C_{\ref{height}})/2\pi) \\
        \leq  &\alpha_{n+1}\mathcal{B}(\alpha_{n+2},\beta_{n+2})/2\pi+\mathcal{M}(\beta_{n+1},\alpha_{n+1})/2\pi+2\alpha_{n+1}C_{\ref{height}}/2\pi+C_{\ref{height}}
        \\\leq &\mathcal{B}(\alpha_{n+1},\beta_{n+1})/2\pi +2C_{\ref{height}} = p^0_n
    \end{aligned}
\end{equation*}
Then by induction it is easy to see that $p^{j+1}_n(x)\leq p^j_n(x)$.
Therefore we may define:
$$
p_n(x) = \lim_{j\rightarrow\infty}p^j_n(x)
$$
on the other hand it is relatively easy to see that $p^j_n(x)\geq b^j_n(x)$ which implies $p_n(x)\geq b_n(x)$. Now it is possible to see the following statement which remarkably links the $\mathcal{H}*$ class and the peak functions:
\begin{proposition}
    $(\alpha,\beta) \in \mathcal{H}*$ if and only if $p_n(0) = b_n(0)=0$ for all $n\geq -1$.
\end{proposition}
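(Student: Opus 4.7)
The plan is to transport the peak recursion into an inverse-Herman iteration via Proposition~\ref{hermy}, and then use the dichotomy between the log and linear regimes of $h_{r,s}^{-1}$ to characterise when the iteration ``collapses'' to the base height. Writing $h_k = h_{\alpha_k,\beta_k}$ and $P^j_n := 2\pi p^j_n(0)$, the recursion $p^{j+1}_n(0) = \mathrm{Im}\,\mathds{Y}_{n+1}(i p^j_{n+1}(0))$ combined with Proposition~\ref{hermy} yields, by induction on $j$,
\[
P^j_n = h_{n+1}^{-1}\circ h_{n+2}^{-1}\circ\cdots\circ h_{n+j}^{-1}\bigl(\mathcal{B}(\alpha_{n+j+1},\beta_{n+j+1})\bigr)+O(1),
\]
with an $O(1)$ error that remains uniform in $j$ and $n$ because each $h_k^{-1}$ is a $\tfrac{1}{2}$-Lipschitz contraction in the log regime and an $\alpha_k$-Lipschitz contraction in the linear regime, so the Proposition~\ref{hermy} error does not accumulate.

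Next I would carry out the same translation for the base function. Tracing the preimages of $x=0$ through the $I^j_n$ construction and using periodicity of the $b^j_n$'s plus the fact that $\mathds{Y}_{n+1}(0)=0$, the dominant contribution at $x=0$ comes from the ``trivial'' preimage sequence $0 \mapsto 0 \mapsto \cdots$ down to level $n+j$, where $b^0_{n+j}(0) = -1$. The same Proposition~\ref{hermy} argument then produces
\[
2\pi b^j_n(0) = h_{n+1}^{-1}\circ\cdots\circ h_{n+j}^{-1}(y_j)+O(1),
\]
with $y_j$ bounded (essentially $-2\pi$). Hence $P^j_n$ and $2\pi b^j_n(0)$ are iterates of the same cocycle with starting values differing by $\mathcal{B}(\alpha_{n+j+1},\beta_{n+j+1})$ versus $O(1)$.

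The rest is a regime analysis. The minute the inverse Herman chain hits a log step, the starting values are compressed to within $O(1)$ (since $\tfrac{1}{2}\log$ swamps any polynomial gap), and subsequent $\tfrac{1}{2}$-Lipschitz log steps keep them within $O(1)$. In the linear regime each step multiplies the gap by $\alpha_k<1$, so the gap decays geometrically. Therefore $p_n(0)=b_n(0)$ if and only if the inverse Herman iteration from $\mathcal{B}(\alpha_{n+1},\beta_{n+1})$ is eventually dragged down by a log step (or collapses geometrically in the linear regime all the way to $0$), which by Remark~\ref{herm1} and the proof of the equivalence $\mathcal{H}^* \iff \alpha\in\mathcal{H}$ is precisely the condition $h_{m-1}\circ\cdots\circ h_n(0)\geq \mathcal{B}(\alpha_n,\beta_n)$ for some $m\geq n$, i.e.\ $(\alpha,\beta)\in\mathcal{H}^*$. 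The forward and backward implications then follow by considering the contrapositive in each direction.

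The main obstacle will be the bookkeeping in the base-function translation: unlike the peak function, $b_n(0)$ receives contributions from \emph{all} preimages of $x=0$ under the grand change of coordinates, not merely the trivial preimage, so one must show that any competing preimage either delivers an even smaller height (and hence does not affect $b^j_n(0)$) or converges into the same inverse-Herman iterate up to $O(1)$. This is exactly analogous to Cheraghi's treatment in the uni-critical case, and the bi-critical adjustment is handled because the symmetry relation (F3) for $Y_{r,s}$ forces the additional preimages to be shifts by $s/r$, whose heights are controlled by Corollary~\ref{maxminy} and the bound $\tfrac{1}{2}\mathcal{B}(\alpha)\leq \mathcal{B}(\alpha,\beta)\leq \mathcal{B}(\alpha)$.
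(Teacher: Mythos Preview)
Your approach is the paper's: translate the peak recursion at $x=0$ into an inverse-Herman iteration via Proposition~\ref{hermy}, control the accumulated comparison error by contraction, and then defer the identification with $\mathcal{H}^*$ to Cheraghi's uni-critical argument. The paper's own proof is only a two-bullet sketch pointing to \cite{cheraghi2022arithmetic}, so your outline is in fact more explicit than what appears there.

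A few places where your sketch is looser than it should be. First, the non-accumulation of the Proposition~\ref{hermy} error should be run through the uniform $0.9$-contraction of the $\mathds{Y}_n$ (as the paper's sketch does), not through Lipschitz bounds on $h_{r,s}^{-1}$: in the log regime the derivative of $h_{r,s}^{-1}$ can be close to $1$ near $y=1$, so it is not uniformly $\tfrac12$-Lipschitz. Second, the claim that a single log step compresses the gap to $O(1)$ is false as stated, since $\log$ of a value near $1/\alpha_k$ is $\log(1/\alpha_k)$, which need not be bounded; bridging the varying-start iteration $h_{n+1}^{-1}\circ\cdots\circ h_{n+j}^{-1}(\mathcal{B}(\alpha_{n+j+1},\beta_{n+j+1}))$ and the fixed-start $\mathcal{H}^*$ condition genuinely requires the monotonicity $h_k^{-1}(\mathcal{B}(\alpha_{k+1},\beta_{k+1}))\leq \mathcal{B}(\alpha_k,\beta_k)$ together with the inductive stitching that Cheraghi supplies. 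Third, your base-function formula $h_{n+1}^{-1}\circ\cdots\circ h_{n+j}^{-1}(-2\pi)$ is ill-posed because $h^{-1}$ is undefined on negatives; but this entire side is unnecessary, since at $x=0$ only the $l=0$ translate contributes, the recursion $b^{j+1}_n(0)=\mathrm{Im}\,\mathds{Y}_{n+1}(ib^j_{n+1}(0))$ is exact, and $b_n(0)=0$ then follows directly from $\mathds{Y}_k(0)=0$ and the $0.9$-contraction without any preimage bookkeeping.
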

\begin{proof}

    This proof follows directly in spirit from Cheraghi's result in \cite{cheraghi2022arithmetic}. Note that that the $h_{\alpha_n,\beta_n}^{-1}$ functions are uniformly close to $\mathds{Y}_n(iy)$ on particular domains around the line $\mathrm{Re}(w)=0$, for $\mathrm{Im}(w)\geq 1$, and furthermore even arbitrarily long compositions where both are defined will also be close by \ref{telescopeherm}. Note that the value $p_n(0)$ depends on the limit $\mathcal{Y}_{n,m}(i\mathcal{B}(\alpha_m,\beta_m)/2\pi)$ as $m\rightarrow\infty$. 
    \par
    But if $(\alpha,\beta) \in \mathcal{H}*$, it is the case that for some value of $M(n)$ we will have that $\mathcal{Y}_{n,M(n)}(i\mathcal{B}(\alpha_{M(n)},\beta_{M(n)})/2\pi)\leq 1$. Because this also holds for $n'\geq n$ we can take arbitrarily more compositions of $\mathds{Y}_n$ after we get $\mathcal{Y}_{n',M(n')}(i\mathcal{B}(\alpha_M(n')<,\beta_M(n'))/2\pi)\leq 1$ but then due to the contracivity property we can conclude that $p_n(0)=0$. 
    \par
    For the other direction, note that if $p_n=0$ for all $n$, then because compositions of the $h_{\alpha_n,\beta_n}^{-1}$ depend on the compositions of the $\mathds{Y}_n$, we will get that for each fixed $n$ there is some $m(n')\geq n$ for all $n'\geq n$ such that $h_{\alpha_n',\beta_n'}^{-1}\circ...\circ h_{\alpha_{m(n')},\beta_{m(n')}}^{-1}(\mathcal{B}(\alpha_{m(n')},\beta_{m(n')})\leq 1$. But then it is clear that $h_{\alpha_{n'-1},\beta_{n'-1}}^{-1}\circ...\circ h_{\alpha_{m(n')},\beta_{m(n')}}^{-1}(\mathcal{B}(\alpha_{m(n')},\beta_{m(n')})\leq 0$, which gives the result.
    
\end{proof}

\begin{lemma}
    $p_n(x): [0,1/\alpha_n] \rightarrow [1,\infty)$ is continuous. Moreover if $\alpha\in\mathcal{H}^*$ then $p_n=b_n$ on $[0,1/\alpha_n]$, for all $n\geq-1$.
\end{lemma}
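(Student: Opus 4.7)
The plan is to deduce continuity of $p_n$ from uniform convergence of the decreasing sequence $p_n^j$, exploiting the $0.9^j$-contractivity of the grand change of coordinates $\mathcal{Y}_{n,n+j}$, and then to obtain the equality $p_n = b_n$ in the Herman$^*$ case by propagating the previously established single-point equality $p_m(0) = b_m(0)$ along trajectories.

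First I would show each $p_n^j$ is continuous on $[0, 1/\alpha_n]$. This is by induction on $j$: $p_n^0$ is constant, the trajectory map $x \mapsto x_{n+1}$ is piecewise affine with compatible boundary values (by the stated periodicity $p_n^{j+1}(x+1) = p_n^{j+1}(x)$), and $\mathrm{Im}\,\mathds{Y}_{n+1}$ is continuous by the properties of $Y_{r,s}$ established in Proposition 3.3. To upgrade pointwise monotone convergence $p_n^j \searrow p_n$ to uniform convergence (and hence continuity of $p_n$), I would unfold the recursion to write
\[
p_n^j(x) = \mathrm{Im}\,\mathcal{Y}_{n,n+j}\bigl(x_{n+j} + i p_{n+j}^0\bigr)
\]
up to integer translations. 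Then $p_n^j(x) - p_n^{j+1}(x)$ equals the imaginary part of $\mathcal{Y}_{n,n+j}$ applied at two vertical heights that differ by at most the universal bound on $|p_{n+j}^0 - p_{n+j}^1|$; the $0.9^j$-contractivity of $\mathcal{Y}_{n,n+j}$ shrinks this gap geometrically, yielding uniform convergence.

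For the second claim, fix $x \in [0, 1/\alpha_n]$ with trajectory $(x_i)_{i \geq n}$. By construction of the peak and base functions through $\mathcal{Y}_{n,n+k}$, the gap $p_n(x) - b_n(x)$ is at most $0.9^k\bigl(p_{n+k}(x_{n+k}) - b_{n+k}(x_{n+k})\bigr)$; if this gap is uniformly $O(1)$ in $k$, then letting $k \to \infty$ forces $p_n(x) = b_n(x)$. To produce the uniform bound, I combine the Herman$^*$ characterisation $p_m(0) = b_m(0)$ for all $m$ with Proposition \ref{hermy}, which gives $|\mathrm{Im}\,Y_{r,s}(iy/(2\pi)) - h_{r,s}^{-1}(y)| \leq C_{\ref{hermy}}$. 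Exhibiting an $M$ for which the corresponding $h_{r,s}^{-1}$ compositions descend below zero gives $p_M(0) - b_M(0) = 0$, and the periodicity of $b_n$ in the real direction, combined with the small-scale relations of $\mathcal{Y}_{n,n+k}$ noted in the remark after its definition, transfers this to $p_{n+k}(x_{n+k}) - b_{n+k}(x_{n+k}) = O(1)$ along the trajectory of $x$.

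The main obstacle is this last translation, from control at the distinguished point $x=0$ to control along an arbitrary trajectory: the Herman$^*$ condition pins a specific starting height $\mathcal{B}(\alpha_m,\beta_m)$, whereas $x_{n+k}$ may visit oscillating heights. I would handle this by invoking parts (iii) and (iv) of the uniformity lemma just established for $Y_{r,s}$, which bound vertical discrepancies between $Y_{r,s}(iy_1)$ and $Y_{r,s}(iy + iy_1)$ by a universal constant, together with part (i), which keeps $\mathrm{Im}\,Y_{r,s}(iy)$ within a universal constant of the infimum over $x$. The argument essentially follows Cheraghi's uni-critical strategy in \cite{cheraghi2022arithmetic}, with all constants absorbed into the universal ones because $Y_{r,s}$ is a bounded perturbation of $\tfrac{1}{2}(Y_r(w) + Y_r(w - s/r))$ by Lemma \ref{GG}.
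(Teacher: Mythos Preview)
Your continuity argument has a real gap. You assert a ``universal bound on $|p_{n+j}^0 - p_{n+j}^1|$'', but none exists: $p_{n+j}^0$ is the constant $\mathcal{B}(\alpha_{n+j+1},\beta_{n+j+1})/(2\pi)+O(1)$, while $p_{n+j}^1(x)=\mathrm{Im}\,\mathds{Y}_{n+j+1}(x_{n+j+1}+ip_{n+j+1}^0)$ ranges, as $x_{n+j+1}$ varies, down to roughly $h^{-1}_{\alpha_{n+j+1},\beta_{n+j+1}}(p_{n+j+1}^0)$ near the minimum, which can be far below $p_{n+j}^0$ whenever $p_{n+j+1}^0 \ll 1/\alpha_{n+j+1}$. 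The crude bound $|p_{n+j}^0-p_{n+j}^1|\le p_{n+j}^0+O(1)$ is not rescued by $0.9^j$: for Brjuno $\alpha$ with, e.g., $a_k=2^{2^k}$ one has $\mathcal{B}(\alpha_{n+j+1})\gtrsim 2^{n+j}$, so $0.9^j\cdot p_{n+j}^0\to\infty$. Uniform convergence of $p_n^j$ thus does not follow from contractivity of $\mathcal{Y}_{n,n+j}$ alone.

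The paper's route (deferring to Cheraghi) does not go through uniform convergence. The key mechanism is part~(ii) of the uniformity lemma: $\mathrm{Im}\,Y_{r,s}(x+iy_1)-\mathrm{Im}\,Y_{r,s}(x+iy_2)\le \mathrm{Im}\,Y_{r,s}(iy_1)-\mathrm{Im}\,Y_{r,s}(iy_2)+1/(2\pi)$, which says the vertical compression along any line $\mathrm{Re}\,w=x$ is at least as strong as along $\mathrm{Re}\,w=0$, up to an additive constant. Iterating this through the recursion transfers control of gaps at a general $x$ to control at $x=0$, with only a geometrically summable error. You already reach for parts (i), (iii), (iv) of that lemma in your second claim; the point is that part~(ii) is the essential ingredient, and it is needed for \emph{both} halves of the lemma---for continuity just as much as for propagating $p_m(0)=b_m(0)$ to $p_n\equiv b_n$. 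Once you route the argument through $x=0$ rather than through a nonexistent uniform bound at level $n+j$, your outline for the second claim becomes correct and matches the paper's intended strategy.
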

\begin{proof}
    To see this, we note that from the geometric inequalities in \ref{herman2} we have that the action of the $\mathds{Y}_n$ functions on vertical lines can always be compared to the action on the line $x=0$, which will bound how $\mathds{Y}_n$ behaves along all the other lines, thus forcing $p_n=b_n$. In particular note that \ref{herman2} implies that we can say that there is a constant $C>0$ such that:
    $$
    p_n^j(x)-p_n(x) \leq p_n^j(0)-p_n(0)+ C
    $$
    But then by the contractivity property this implies we will be able to obtain:
    $$
    p_n^j(x)-p_n(x) \leq |p_{n+m}^{t}(0)-p_{n+m}(0)|0.9^m+ C0.9^m
    $$
    for all $t$ and $m$ which gives uniform convergence. To see that the resultant function must satisfy $p_n=b_n$ now note \ref{herman2} simultaneously implies that there is a $C>0$ such that
    $$
    p_n^j(x)-b_n(x) \leq p_n^j(0)-b_n(0)+ C
    $$
    Which implies in the limit that we obtain $|p_n(x)-b_n(x)|\leq C$ but due to the contraction property this in fact yields $p_n(x)=b_n(x)$.
\end{proof}
\begin{lemma}
    No matter what $(\alpha,\beta)$ we choose, $p_n(x) = b_n(x)$ on a dense set of points.
\end{lemma}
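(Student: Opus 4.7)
The plan is to adapt the density scheme from Cheraghi's uni-critical construction \cite{cheraghi2022arithmetic} to the bi-critical setting. The key observation is that both $p_n$ and $b_n$ are limits of iterates of the maps $\mathds{Y}_{n+1}, \mathds{Y}_{n+2}, \ldots$ applied to constant initial data (namely $\sup b_{n+j}+C$ versus $-1$), so equality can be forced at carefully chosen trajectories whose projection passes through the approximate extrema of Corollary \ref{maxminy} at every depth.

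The first step is to exhibit at least one equality point at level $n$. For a fixed $n$, I inductively select at each depth $n+j$ a point $x_{n+j}\in[0,1/\alpha_{n+j}]$ within a fixed constant of the approximate maximum $(1+\beta_{n+j+1})/(2\alpha_{n+j+1})-1/2$ from Proposition \ref{maxmin}, and track the resulting sequence of preimages under the successive inverses of $\mathds{Y}_{m+1}$. Writing $x$ for the image at level $n$, the difference $p^j_n(x)-b^j_n(x)$ is the vertical rise of $\mathcal{Y}_{n,n+j}$ along the line $x_{n+j}+i\mathds{R}$ between heights $-1$ and $p^0_{n+j}$; at such a peak trajectory this rise is effectively captured by $b^{j+1}_n(x)$ after one further iteration, using the uniformity statements (i)--(iv) of the previous lemma, so that $p^j_n(x)-b^{j+1}_n(x)=O(0.9^j)$. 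Passing to the limit, continuity of $p_n$ together with monotone convergence of $b^j_n\nearrow b_n$ yields $p_n(x)=b_n(x)$.

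The second step promotes this single equality point to a dense set at level $n$. Because the grand change of coordinates $\mathcal{Y}_{n,n+k}$ is $0.9^k$-Lipschitz and its images of the fundamental domains of $I_{n+k}$ tile $[0,1/\alpha_n]$ in the real direction, the images under $\mathcal{Y}_{n,n+k}$ of an equality point at level $n+k$ are equality points at level $n$ whose real parts are spaced at scale $O(0.9^k)$. Running the first-step construction at every level $n+k$ simultaneously and pushing forward therefore yields a countable dense collection of equality points at level $n$. The bi-critical variant gives two complementary extremal branches (the approximate minima $\tilde x\in\{0,s/r\}$ of Corollary \ref{maxminy}), each generating its own family of equality points, which only reinforces the density.

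The main obstacle is correctly tracking the signs $\epsilon_m,\delta_m$ in $\mathds{Y}_m$, which swap the two approximate extrema of $Y_{\alpha_m,\beta_m}$ and thereby determine which branch of the peak trajectory to follow at each depth. Once this combinatorial bookkeeping is handled --- a direct extension of Cheraghi's uni-critical scheme --- the density conclusion follows.
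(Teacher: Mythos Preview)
Your proposal is correct and follows essentially the same approach as the paper: select a trajectory that stays near the approximate maximum $x_{max}$ at every depth, observe that along such a trajectory the initial gap $p^0_{n+k}-b_{n+k}$ is uniformly bounded (since both are within a constant of $\mathcal{B}(\alpha_{n+k+1},\beta_{n+k+1})/(2\pi)$), and then let the $0.9^k$ contraction of $\mathcal{Y}_{n,n+k}$ force $p_n(x)=b_n(x)$; density then follows from running this at every level and pushing forward. The paper's version is slightly more direct --- it invokes the Brjuno-height estimate (Proposition~\ref{height} and the subsequent $\sup b_n$ computation) to bound $|p^0_{n+k}(x_{n+k})-b_{n+k}(x_{n+k})|\le C$ in one stroke, rather than routing through the uniformity statements (i)--(iv), and it compares $p^k_n$ with $b_n$ directly rather than with $b^{j+1}_n$ --- but the structure and the key idea are the same.
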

\begin{proof}
    It is enough to prove that on each $I_n$ there exists an $x$ such that $p_n(x)=b_n(x)$. To find such a point, we select $x_n$ such that $x_n\in [x_{max}-1/2,x_{max}+1/2]$ for all $n$. Then by the same argument in the brjuno height lemma, we obtain that:
    $$
    |2\pi\mathcal{B}(\alpha_n,\beta_n) - b_n(x_n)| \leq C
    $$
    This implies that definition we have that $|p^{n+k}_{n+k}n(x_{n+k})-b_{n+k}(x_{n+k})| \leq C $, hence:
    $$
    p^k_n(x_n) - b_n(x_n) \leq 0.9^k (C)
    $$
    which implies that $p_n(x_n)=\lim_{k\rightarrow\infty}p^k_n(x_n)=b_n(x_n)$.
\end{proof}
\begin{proposition}
    For $\alpha$ a brjuno number, the boundary $\mathds{A}_{[\alpha],[\beta]}=\partial \mathds{M}_{[\alpha],[\beta]}$ is a jordan curve if and only if $(\alpha,\beta)\in\mathcal{H}^*$. Either way, the boundary of the interior of $\mathds{M}_{[\alpha],[\beta]}$ is always a jordan curve.
\end{proposition}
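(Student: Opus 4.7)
The plan is to work in the universal cover $I_{-1}\subset\overline{\mathds{H}'}$ and pass down to $\mathds{M}_{\alpha,\beta}$ via the covering map $\psi(w):=s(e^{2\pi iw})$, which restricted to the fundamental strip $\{0\leq\mathrm{Re}(w)<1\}$ is a homeomorphism onto $\mathds{C}^*$. I would decompose $I_{-1}$ into the ``filled'' region
\[
I_{-1}^{\circ}:=\{x+iy:x\in[0,1],\ y\geq p_{-1}(x)\}
\]
together with vertical ``hairs'' $\{x\}\times[b_{-1}(x),p_{-1}(x)]$ at those $x$ with $b_{-1}(x)<p_{-1}(x)$. Write $\Gamma:=\psi(\mathrm{graph}(p_{-1}))$.

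For the ``either way'' statement, I would first establish that $\Gamma$ is a Jordan curve in every case: continuity of $p_{-1}$ and its $1$-periodicity $p_{-1}(0)=p_{-1}(1)$ (from the preceding continuity lemma) exhibit $\Gamma$ as the continuous image of a circle, and injectivity follows because different $x\in[0,1)$ produce different arguments under $\psi$. The hairs are one-dimensional and hence nowhere dense in $\mathds{M}_{\alpha,\beta}$, so $\mathrm{int}(\mathds{M}_{\alpha,\beta})=\psi(\mathrm{int}(I_{-1}^{\circ}))$ and its topological boundary is exactly $\Gamma$.

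For the biconditional, the ``if'' direction is immediate from the continuity lemma: if $(\alpha,\beta)\in\mathcal{H}^*$ then $b_{-1}\equiv p_{-1}$, there are no hairs, and $\mathds{A}_{\alpha,\beta}=\Gamma$ is Jordan. For the ``only if'' direction I would work contrapositively: if $(\alpha,\beta)\notin\mathcal{H}^*$ the preceding proposition yields some $n\geq-1$ with $p_n(0)>b_n(0)$, and one must propagate this failure down to the level $n=-1$. Applying the grand change of coordinates $\mathcal{Y}_{-1,n}$, which preserves vertical lines and is injective, the nontrivial segment $i[b_n(0),p_n(0)]\subset I_n$ maps to a nontrivial vertical segment above some $x'\in[0,1]$. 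Iterating the defining relations $\mathds{Y}_{k+1}(x_{k+1}+ib_{k+1}(x_{k+1}))=x_k-l_k+ib_k(x_k)$ together with their peak analogues shows that the lower endpoint of this image lies on $\mathrm{graph}(b_{-1})$ and the upper endpoint on $\mathrm{graph}(p_{-1})$, so $b_{-1}(x')<p_{-1}(x')$ and a genuine hair appears in $I_{-1}$.

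Finally, to convert the presence of a hair into an obstruction to Jordan-ness I would inspect the local topology of $\mathds{A}_{\alpha,\beta}$ at the base point $q:=\psi(x'+ip_{-1}(x'))\in\Gamma$. By the density lemma $b_{-1}=p_{-1}$ on a dense subset of $[0,1]$, so there exist sequences $t_k^{\pm}\to(x')^{\pm}$ with $b_{-1}(t_k^\pm)=p_{-1}(t_k^\pm)$; these furnish two distinct arcs of $\Gamma$ converging to $q$ from its two angular sides, whilst $\psi$ of the hair provides a third arc of $\mathds{A}_{\alpha,\beta}$ emanating radially from $q$. A Jordan curve in the plane is locally homeomorphic to an open interval at every point, which precludes three distinct arcs meeting at a single point; hence $\mathds{A}_{\alpha,\beta}$ cannot be Jordan. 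The main obstacle will be the propagation step: one must verify that both endpoints of the segment $i[b_n(0),p_n(0)]$ really land on the graphs of $b_{-1}$ and $p_{-1}$ respectively, which requires carefully tracking the recursive definitions of $b^j_n$ and $p^j_n$ through the compositions comprising $\mathcal{Y}_{-1,n}$ and exploiting the $0.9^n$ contraction factor to pass to the limit.
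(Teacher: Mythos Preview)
Your argument is correct and follows the paper's route: both identify $\mathrm{graph}(p_{-1})$ as the boundary of the interior via the density of $\{p_{-1}=b_{-1}\}$ together with $p_{-1}\geq b_{-1}$, and both derive the biconditional from the characterisation $p_n(0)=b_n(0)$ for all $n$. For the ``only if'' direction the paper simply asserts that $b_{-1}<p_{-1}$ holds on a \emph{dense} set whenever $(\alpha,\beta)\notin\mathcal{H}^*$ (which one gets by propagating the single failure $p_n(0)>b_n(0)$ through the small-scale functional relations of $\mathcal{Y}_{-1,n}$), whereas you produce a single hair and invoke a local triod obstruction. Both conclusions are valid; yours is more self-contained, the paper's is shorter but leaves the density claim implicit.

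One imprecision to fix: ``hairs are one-dimensional and hence nowhere dense'' does not on its own give $\mathrm{int}(\mathds{M}_{\alpha,\beta})=\psi(\{y>p_{-1}(x)\})$, since an uncountable union of segments can certainly have interior. The actual reason, which you already have available and use later, is the density lemma: any ball about a hair point $(x,y)$ with $y<p_{-1}(x)$ meets some vertical line $\{x'\}\times\mathds{R}$ with $b_{-1}(x')=p_{-1}(x')$ close to $p_{-1}(x)>y$, and hence exits $I_{-1}$. Your propagation step is sound; the recursive definitions of $b_k^j$ and $p_k^j$ are arranged precisely so that $\mathds{Y}_k$ carries $\mathrm{graph}(b_k)$ into $\mathrm{graph}(b_{k-1})$ and likewise for $p$, so the two endpoints of $i[b_n(0),p_n(0)]$ do land on the respective graphs at level $-1$.
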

\begin{proof}
    \textbf{Step 1}: Note that the curve defined by $p_n$ is the boundary of the \textbf{interior} of $I_n$. This follows because $p_n\equiv b_n$ on a dense set of points, and $p_n\geq b_n$. 
    \par
    The set of points above the graph $x+ip_n(x)$ is an open set with boundary equal to $p_n$ that is clearly a subset of the interior of $I_n$. On the other hand, any open ball in $I_n$ cannot include any points on $p_n$ as since  $p_n\equiv b_n$ on a dense set of points such an open ball would necessarily intersect with $b_n$, and therefore not be contained in $I_n$. This proves the latter part of the ``result".
    \\
    \textbf{Step 2}: Then the rest follows immediately due to the fact that $b_n<p_n$ on a dense set of points too, unless $(\alpha,\beta)$ satisfy the $\mathcal{H}^*$ condition.
\end{proof}
\subsection{Cantor Bouquets and Hairy Jordan curves}

A \textbf{Cantor bouquet} is any subset of the plane which is ambiently homeomorphic to a set of the form:
$$
\{ re^{2\pi i\theta} \in \mathds{C} | 0\leq\theta\leq1,0\leq r\leq R(\theta)\}
$$
where $R: \mathds{R}/\mathds{Z}\rightarrow [0,1]$ satisfies:
\begin{itemize}
    \item $R \equiv0$ on a dense subset of $\mathds{R}/\mathds{Z}$, and $R>0$ on a dense subset of $\mathds{R}/\mathds{Z}$
    \item for each $\theta_0\in\mathds{R}/\mathds{Z}$ we have:
    $$
    \limsup_{\theta\rightarrow\theta_o^+} R(\theta) = R(\theta_0) = \limsup_{\theta\rightarrow\theta_0^-}R(\theta)
    $$
\end{itemize}
A \textbf{one-sided hairy Jordan curve} is any subset of the plane which is ambiently homeomorphic to a set of the form 
$$
\{re^{2\pi i\theta}\in\mathds{C}| 0\leq\theta\leq 1,1\leq r\leq 1+R(\theta)\}
$$
where $R: \mathds{R}/\mathds{Z} \rightarrow [0,1]$ satisfies the same properties. Such sets enjoy similar features to the standard cantor set, and under a mild additional condition (topological smoothness) they are uniqely characterised by some topological axioms \cite{AO}.
\subsection{Final Trichotomy, and Size of Siegel Disks}

\begin{theorem}[Trichotomy of the maximal invariant set]
\label{trich}
    For every $(\alpha,\beta) \in \mathds{R} \setminus \mathds{Q}\times\mathds{R}$ one of the following statements hold:
\begin{itemize}
    \item[(i)] $\mathcal{B}(\alpha,\beta)<\infty$ and $(\alpha,\beta)$ is Herman* then $A_{\alpha,\beta}$ is a Jordan curve,
    \item[(ii)] $\mathcal{B}(\alpha,\beta)<\infty$ but $(\alpha,\beta)$ is not Herman*, and $A_{\alpha,\beta}$ is a one-sided hairy Jordan curve,
    \item[(iii)] $\mathcal{B}(\alpha,\beta)=\infty$, and $A_{\alpha,\beta}$ is a Cantor bouquet.
\end{itemize}

\end{theorem}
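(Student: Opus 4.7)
The plan is to translate everything to the lift $I_{-1} \subset \overline{\mathds{H}'}$ via the projection $\pi(w) = s(e^{2\pi i w})$, under which $\mathds{M}_{\alpha,\beta} = \pi(I_{-1}) \cup \{0\}$ and $|\pi(x+iy)| = e^{-2\pi y}$. Vertical lines in $I_{-1}$ map to radial segments, and the periodicity $b_{-1}(0) = b_{-1}(1)$ closes the picture into an annular/disk structure around $0$. The outer boundary (farthest from $0$) above $x$ sits at radius $e^{-2\pi b_{-1}(x)}$, while the innermost point of the interior of $\pi(I_{-1})$ sits at radius $e^{-2\pi p_{-1}(x)}$ (interpreted as $0$ when $p_{-1}(x) = +\infty$). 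Under this dictionary, the trichotomy becomes a trichotomy of $(b_{-1}, p_{-1})$-pairs already almost entirely described in the preceding two subsections.

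For Case (i), the equivalence $(\alpha,\beta) \in \mathcal{H}^* \iff p_n \equiv b_n$ gives that $b_{-1}$ is continuous and finite on $[0,1]$ with $b_{-1}(0)=b_{-1}(1)$. Its graph is a closed Jordan arc which projects injectively under $\pi$, so $\partial \mathds{M}_{\alpha,\beta}$ is a Jordan curve. For Case (ii), I would take $\Gamma := \pi(\{x + i p_{-1}(x) : x \in [0,1]\})$ as the main Jordan curve (continuous and finite because $\mathcal{B}(\alpha,\beta) < \infty$), and the hairs as the radial segments $\pi(\{x + it : b_{-1}(x) \le t \le p_{-1}(x)\})$, of Euclidean length $R(\theta) = e^{-2\pi b_{-1}(x)} - e^{-2\pi p_{-1}(x)}$. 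The lemma that $p_{-1} = b_{-1}$ on a dense set gives $\{R = 0\}$ dense, while failure of Herman* together with the self-similar action of $\mathcal{Y}_{n,-1}$ promotes a single point of $\{b_{-1} < p_{-1}\}$ to a dense such set, giving $\{R > 0\}$ dense. Combining continuity of $p_{-1}$ with the accumulation-of-hairs proposition ($\liminf_{t\to x^\pm} b_{-1}(t) = b_{-1}(x)$) produces exactly the $\limsup$ equalities demanded by the definition of a one-sided hairy Jordan curve.

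For Case (iii), the key new input is that $\sup b_{-1} = \infty$ forces $p^0_{-1} \equiv +\infty$, and then inductively every $p^j_{-1} \equiv +\infty$ since $\mathrm{Im}\, \mathds{Y}_n$ is monotone in height, so $p_{-1} \equiv +\infty$. Combined with $p_{-1} = b_{-1}$ on a dense set, this gives that $\{x : b_{-1}(x) = +\infty\}$ is dense; these angles correspond to hairs in $\mathds{M}_{\alpha,\beta}$ reaching all the way down to $0$. On the other hand the finite-level approximants $b^j_{-1}$ are continuous and finite, so pulling back intervals on which $b^j_{-1}$ stays bounded via the grand change of coordinates $\mathcal{Y}_{n,-1}$ (which contracts by $0.9^n$) shows that $\{b_{-1} < \infty\}$ is also dense. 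Setting $R(\theta) = e^{-2\pi b_{-1}(x)}$ when finite and $0$ otherwise, the two density statements plus the accumulation-of-hairs proposition verify the Cantor-bouquet axioms, with $\{0\}$ playing the role of the central anchor point.

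The main obstacle I expect is the upgrade from pointwise information to density. In Case (ii) I need that a single angle at which $b_{-1} < p_{-1}$ forces a dense set of such angles; this should follow from the approximate periodicity of $b_{-1}$ inherited from the renormalisation tower, since $\mathcal{Y}_{n,-1}$ embeds a scaled copy of the $n$th-level situation densely inside $I_{-1}$. The dual statement in Case (iii), that both $\{b_{-1} = \infty\}$ and $\{b_{-1} < \infty\}$ are dense, is the most delicate point, and will be argued via the same contracting self-similarity combined with the height recursion $\sup b_n = \infty$ at every level $n$ (which follows from $\mathcal{B}(\alpha) = \infty$ and $\tfrac{1}{2}\mathcal{B}(\alpha_n) \le \mathcal{B}(\alpha_n,\beta_n)$). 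Everything else is essentially a transcription of the uni-critical argument of \cite{cheraghi2022arithmetic} with $Y_r$ replaced by $Y_{r,s}$, using that the latter satisfies the same contractivity and functional relations.
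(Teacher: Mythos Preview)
Your proposal is correct and follows essentially the same route as the paper: lift everything to $I_{-1}$ via $w \mapsto s(e^{2\pi i w})$, read the trichotomy off from the pair $(b_{-1}, p_{-1})$, and invoke the preceding propositions (Herman$^*$ $\Leftrightarrow$ $p_n \equiv b_n$; $p_{-1} = b_{-1}$ on a dense set; the accumulation-of-hairs proposition; and $\sup b_n \asymp \mathcal{B}(\alpha_{n+1},\beta_{n+1})$) to verify the axioms in each case. The paper's own proof is a terse paragraph that does exactly this, deferring the density and $\limsup$ verifications to the earlier lemmas; you have simply unpacked those verifications in more detail, and your identification of the two density upgrades (a single $b_{-1} < p_{-1}$ point to a dense set via the self-similarity of $\mathcal{Y}_{n,-1}$, and the dual density of $\{b_{-1} = \infty\}$ and $\{b_{-1} < \infty\}$ in the non-Brjuno case) is exactly the content that the paper leaves implicit.
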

But then because the modified brjuno and herman conditions were found to be equivalent to the original modified brjuno and herman conditions in chapter 3, this immediately proves that the topological trichotomy is in fact exactly the same as the original one provided by Cheraghi for the uni-critical case.
\begin{proof}

We note that $\mathds{M}_{[\alpha],[\beta]}$ is defined as the image under the exponential map of $I_{-1}$, where the latter is defined as the points above $b_{-1}$. As proved before, the boundary of the interior of $I_{-1}$ when it exists is $p_{-1}$. With these two facts in mind it is clear that $A_{\alpha,\beta}$ is a Jordan curve if and only if $(\alpha,\beta)$ is Herman*. On the other hand, when $\alpha$ is not Brjuno we have that $\mathcal{B}(\alpha,\beta)=\infty$ which implies that the $b_{-1}$ function descends to a cantor bouquet. Finally when $\alpha$ is brjuno but $(\alpha,\beta)$ is not Herman* we get that $A_{\alpha,\beta}$ is stuck between the base and peak functions, and therefore it will be a one-sided hairy jordan curve.
\end{proof}
\begin{corollary}
\label{thsize}
    When we are in the Brjuno case there exists a constant $C_{\ref{thsize}}$ such that:
    \begin{itemize}
        \item $\mathds{M}_{[\alpha],[\beta]} \supset B(0,C_{\ref{thsize}}^{-1}e^{2\pi \mathcal{B}(\alpha,\beta)})$
        \item whilst $\mathds{M}_{[\alpha],[\beta]} \subset B(0,C_{\ref{thsize}}e^{2\pi \mathcal{B}(\alpha,\beta)})$
    \end{itemize}
\end{corollary}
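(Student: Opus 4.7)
The plan is to exploit the definition $\mathds{M}_{\alpha,\beta}=\{s(e^{2\pi i w}):w\in I_{-1}\}\cup\{0\}$: under this projection, $|s(e^{2\pi i w})|=e^{-2\pi\mathrm{Im}(w)}$, so Euclidean radii in $\mathds{M}_{\alpha,\beta}$ are inversely determined by the heights of $I_{-1}$. The key input is the Brjuno-height proposition proved just above, which supplies $\bigl|2\pi\sup_{x}b_{-1}(x)-\mathcal{B}(\alpha,\beta)\bigr|\le C$ via the telescoping recursion through the tower of changes of coordinates $\mathds{Y}_{n}$.

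For the inner containment I would argue directly. Given any $z$ with $|z|$ strictly less than $\exp(-2\pi\sup_{x}b_{-1}(x))$, its preimage $w$ under the projection satisfies $\mathrm{Im}(w)>\sup_{x}b_{-1}(x)\ge b_{-1}(\mathrm{Re}(w))$, so $w\in I_{-1}$ automatically and hence $z\in\mathds{M}_{\alpha,\beta}$. Substituting the Brjuno-height estimate absorbs the universal additive constant into a multiplicative constant $C_{\ref{thsize}}$ and yields the advertised inner ball.

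For the outer containment the dual statement is that $\inf_{x}b_{-1}(x)$ is bounded below in the same fashion. I would replay the same telescoping recursion, but tracked at the minima reference points $\tilde{x}\in\{0,s_{n}/r_{n}\}$ supplied by Corollary~\ref{maxminy}, using the lower-bound branch $a_{r,s}$ of Lemma~\ref{boundab} in place of the upper branch. Because Proposition~\ref{height} locates both extremes of $\mathrm{Im}\,Y_{r,s}(\cdot+iy)$ within a uniform constant of $2\pi r y+\mathcal{M}(r,s)$, the same geometric-sum collapse applies and produces $\bigl|2\pi\inf_{x}b_{-1}(x)-\mathcal{B}(\alpha,\beta)\bigr|\le C'$, from which the outer ball follows on exponentiating.

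The main obstacle is exactly this outer step in the Brjuno-but-not-Herman regime, where the hair structure of $I_{-1}$ can in principle drive $b_{-1}$ down sharply at projected hair endpoints. One must genuinely leverage the $0.9^{n}$ contraction factor of the grand changes of coordinates $\mathcal{Y}_{n,n+k}$ to show that successive descents propagated up the tower assemble into a convergent geometric series, contributing only a universal additive slack rather than accumulating unboundedly. Once that is settled, combining inner and outer estimates delivers the two-sided size control claimed in Corollary~\ref{thsize}.
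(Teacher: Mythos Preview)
Your inner containment is fine and matches the paper: bounding $\sup_x b_{-1}(x)$ via the Brjuno--height proposition is exactly what is used (the paper phrases it through $p^0_{-1}=\sup b_{-1}+C$, but this is the same information).

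The outer containment, however, cannot be obtained the way you propose. You aim to show $\inf_x b_{-1}(x)\ge \tfrac{1}{2\pi}\mathcal{B}(\alpha,\beta)-C'$, but this is simply false: the paper establishes that $0\in I_{-1}$ (equivalently $+1\in\mathds{M}_{\alpha,\beta}$), which forces $b_{-1}(0)\le 0$ for every $(\alpha,\beta)$, while $\mathcal{B}(\alpha,\beta)$ is unbounded over the Brjuno class. No telescoping over minima can repair this. Relatedly, your reading of Proposition~\ref{height} is off: that estimate pins only the \emph{maximum} of $x\mapsto\mathrm{Im}\,Y_{r,s}(x+iy)$ near $2\pi r y+\mathcal{M}(r,s)$; the minimum (at $\tilde x=0$ or $s/r$) is governed instead by $Y_{r,s}(iy)\sim h^{-1}_{r,s}(2\pi y)$, which is a completely different quantity.

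What the paper actually does is switch objects. As the proof says explicitly, the Corollary is about the size of the \emph{Siegel disk}, i.e.\ the interior of $\mathds{M}_{\alpha,\beta}$, whose boundary is described by the peak functions $p_{-1}$ rather than by $b_{-1}$. The inner ball then comes from $p_{-1}\le p^0_{-1}=\tfrac{1}{2\pi}\mathcal{B}(\alpha,\beta)+C$ (the $p^j$ are decreasing), and the outer control comes from the lower bound on $\sup b_{-1}$ together with $p_{-1}\ge b_{-1}$. In short, the missing idea in your argument is to pass from the base functions $b_{-1}$ to the peak functions $p_{-1}$ for the outer direction; trying to run the recursion at the minima of $b_{-1}$ leads nowhere.
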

\begin{proof}
    Now that we have computed that the $b_n$ functions depend on the modified Brjuno function, it is a quick argument to describe the size of the siegel disks when $B(\alpha,\beta)<\infty$. This follows from the fact the peak functions $p_{-1}^j$ that describe the boundary of the siegel disk are strictly decreasing from $\frac{1}{2\pi}B(\alpha,\beta)+C$. On the other hand, the $b_n$ functions are at most $\frac{1}{2\pi}B(\alpha,\beta)-C$, thus yielding the result.
\end{proof}

\section{The Map}
\begin{proposition}
For every $(\alpha,\beta)\in\mathds{R}\setminus\mathds{Q} \times \mathds{R}$ the map $\mathds{T}_{[\alpha],[\beta]}:\mathds{A}_{[\alpha],[\beta]}\rightarrow\mathds{A}_{[\alpha],[\beta]}$ is topologically recurrent
\end{proposition}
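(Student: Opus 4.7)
The plan is to show every point $z \in \mathds{A}_{\alpha,\beta}$ is recurrent (i.e., $z \in \omega_{\mathds{T}_{\alpha,\beta}}(z)$). The fixed point $0$ is trivially recurrent. For $z \neq 0$, lift via $z = s(e^{2\pi i w})$, where $w = x_0 + i b_{-1}(x_0)$ lies on $\partial I_{-1}$ with $x_0 \in [0,1)$. Because $\tilde T_{\alpha,\beta}$ is a homeomorphism of $I_{-1}/\mathds{Z}$ which preserves $\partial I_{-1}$ and shifts real parts by $\alpha$, iterates $\tilde T_{\alpha,\beta}^n(w)$ stay on $\partial I_{-1}$ and recurrence of $z$ is equivalent to producing a subsequence $n_k \to \infty$ along which $\tilde T_{\alpha,\beta}^{n_k}(w) \to w$.

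The subsequence I would use is $n_k = q_k$, the denominators of the continued fraction convergents $p_k/q_k$ of $\alpha$. The shifts $q_k\alpha - p_k$ are of size $\prod_{i=0}^{k-1}\alpha_i$, which is precisely the real-part width of the first preimage region of the grand change of coordinates $\mathcal{Y}_{0,k-1}$. This is no accident: by the renormalisation relation $\mathcal{R}^k(\mathds{T}_{\alpha,\beta}) = \mathds{T}_{\alpha_k,\beta_k}$, applying the Fatou-type coordinate $\phi_{0,k-1}$ conjugates the "$q_k$-th iterate near the fundamental sector" of $\mathds{T}_{\alpha,\beta}$ to a single iterate of $\mathds{T}_{\alpha_k,\beta_k}$. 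Equivalently in the lift, $\tilde T_{\alpha,\beta}^{q_k}(w)$ factors through $\mathcal{Y}_{0,k-1}$ applied to one iterate of $\tilde T_{\alpha_k,\beta_k}$ at level $k$. Since one iterate at level $k$ produces only $O(1)$ displacement in the level-$k$ lift, and $\mathcal{Y}_{0,k-1}$ is $0.9^{k-1}$-Lipschitz, we get
\[
|\tilde T_{\alpha,\beta}^{q_k}(w) - w| \leq C \cdot 0.9^{k-1} \to 0,
\]
from which $\mathds{T}_{\alpha,\beta}^{q_k}(z) \to z$ follows by continuity of $w \mapsto s(e^{2\pi iw})$ on bounded subsets of $\overline{\mathds{H}'}$.

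The hard part will be making the factorisation of $\tilde T_{\alpha,\beta}^{q_k}$ through $\mathcal{Y}_{0,k-1}$ rigorous when $w$ lies on the boundary $\partial I_{-1}$ rather than in its interior. One must match the Fatou coordinate $\phi_{\alpha,\beta}$ with the grand change of coordinates at each renormalisation level, and keep careful track of which of the two cases in the piecewise definition of $\tilde T_{\alpha,\beta}$ applies at each step of the orbit. This is exactly where the "small scale relations" for $\mathcal{Y}^{-1}_{0,k-1}$ (real part within $\prod_{i<k}\alpha_i$, imaginary part within $0.9^k$) stated in the remark after the grand change of coordinates definition become indispensable. A secondary subtlety is that one iterate of $\mathds{T}_{\alpha_k,\beta_k}$ may not land $\phi_{0,k-1}(z)$ exactly inside the fundamental domain of $I_{k-1}$, so the conjugation needs an integer translation to compensate; the functional relation (F1) of $Y_n$ makes this translation project cleanly through $\mathcal{Y}_{0,k-1}$ without destroying the $0.9^{k-1}$ bound.

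Finally, the ``limit'' branch of the definition of $\tilde T_{\alpha,\beta}$ (the case when no $n \geq 0$ witnesses $w_n \in K_n$) has to be treated separately, but here the contraction factor $0.9^k$ in the infinite composition gives convergence of the limit directly, and the same contractivity argument applied to the tail yields the recurrence bound without change. Combining all cases, $\mathds{T}_{\alpha,\beta}^{q_k}(z) \to z$ for every $z \in \mathds{A}_{\alpha,\beta}$, completing the proof that $\mathds{T}_{\alpha,\beta}$ is topologically recurrent.
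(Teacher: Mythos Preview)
Your approach is essentially the same as the paper's: both pull a point down the tower via the grand change of coordinates $\mathcal{Y}_{0,k}$, displace by one unit at level $k$, and push back up using the $0.9^k$ contraction to land $O(0.9^k)$-close to the start (the paper phrases this as ``approximate $w_{-1}$ by the image of $w_n+1$, which is a high iterate'', while you make explicit that the iterate count is $q_k$). One small imprecision worth fixing: not every $z\in\mathds{A}_{\alpha,\beta}$ lifts to the graph of $b_{-1}$ (for instance when $\alpha\notin\mathcal{B}$ one has $\mathds{A}_{\alpha,\beta}=\mathds{M}_{\alpha,\beta}$, so the lift can be any $w\in I_{-1}$), but your contraction argument works verbatim for arbitrary $w\in I_{-1}$ and this does not affect correctness.
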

\begin{proof}
    The idea for this follows from \cite{cheraghi2022arithmetic}. The key tool is to use the $\mathcal{Y}_n$ functions. For every $w_{-1}\in I_{-1}$ we can approximate it arbitrarily well by $\mathcal{Y}_{n,-1}(w_{n}+1)$ lower down the tower, where this will clearly be equal to a high iterate of the map by the definition of the map.
\end{proof}
Now fix an arbitrary $y \geq 0$, and inductively define $y_n \geq 0$, for $n \geq -1$, according to
\begin{equation*}
    y_{-1} = y, \quad y_{n+1} = \mathrm{Im} \, \mathds{Y}_{n+1}^{-1}(i y_n).
\end{equation*}

For $n \geq 0$, let
\begin{equation*}
    {}^y I_n^0 = \{ w \in \mathds{C} \mid \mathrm{Re} \, w \in [0, 1/\alpha_n], \mathrm{Im} \, w \geq y_n - 1 \},
\end{equation*}
\[
{}^y J_n^0 = \{ w \in {}^y I_n^0 \mid \mathrm{Re} \, w \in [1/\alpha_n - 1, 1/\alpha_n] \}, \quad {}^y K_n^0 = \{ w \in {}^y I_n^0 \mid \mathrm{Re} \, w \in [0, 1/\alpha_n - 1] \}.
\]

In a similar manner to before, inductively obtain the sets ${}^y I_n^j$, ${}^y J_n^j$, and ${}^y K_n^j$ by successively applying the $\mathds{Y}_n$ along with appropriate translations. Equivalently, in our language of $\mathcal{Y}_n$, each ${}^y I_n^j$ is simply obtained by the set of points above the curve that $\mathcal{Y}_n$ sends the line $i(y_n-1)+x, x\in\mathds{R}$ to. It is clear that ${}^y I_n^j+1\subset {}^y I_n^j$ for all $j$ and $n$, allowing the definition:
$$
{}^y I_n := \cap_{j\geq0}{}^y I_n^j
$$
This will satisfy natural properties such as ${}^{y'} I_n \subsetneq {}^y I_n$ for $y'>y>0$, and furthermore by the uniform contraction of $\mathcal{Y}_n$:
$iy \notin {}^{y'} I_n$ when $y<y'$. Recall that for all $\alpha \in \mathds{R} \setminus \mathds{Q}$, $\max(\mathds{A}_{[\alpha],[\beta]} \cap \mathds{R}) = +1$. We define $r_{\alpha,\beta} \geq 0$ according to
\[
[r_{\alpha,\beta}, 1] = \mathds{A}_{[\alpha],[\beta]} \cap [0, +\infty).
\]

If $\alpha \notin \mathcal{B}, r_{\alpha,\beta} = 0$, and if $\alpha \in \mathcal{B}, r_{\alpha,\beta} = e^{-2\pi p_{-1} (0)}$. When $\alpha \notin \mathcal{B}$ and $t \in (0, 1)$, choose $y \geq 0$ so that $t = e^{-2\pi y}$ and define
\[
{}^t \mathds{A}_{[\alpha],[\beta]} = \{ s(e^{2\pi i w}) \mid w \in {}^y I_{-1} \} \cup \{0\}.
\]

We extend this notation by setting ${}^0 \mathds{A}_{[\alpha],[\beta]} = \{0\}$. When $\alpha \in \mathcal{B}$ and $t = e^{-2\pi y}\in [r_\alpha, 1]$, we also have
\[
{}^t \mathds{A}_{[\alpha],[\beta]} = \{ s(e^{2\pi i w}) \mid w \in {}^y \mathcal{V}_{-1}, \mathrm{Im} \, w \leq p_{-1} (\mathrm{Re} \, w) \}.
\]

For all $\alpha \in \mathds{R} \setminus \mathds{Q}$: ${}^1\mathds{A}_{[\alpha],[\beta]} = \mathds{A}_{[\alpha],[\beta]}$. Then every $r_{\alpha,\beta} \leq s < t \leq 1$, we have
\[
{}^s \mathds{A}_{[\alpha],[\beta]} \subsetneq {}^t \mathds{A}_{[\alpha],[\beta]} \quad \text{and} \quad t \notin {}^s \mathds{A}_{[\alpha],[\beta]}.
\]
\begin{proposition}
    \begin{itemize}
        \item For every $(\alpha,\beta) \in \mathds{R}\setminus\mathds{Q} \times \mathds{R}$ and any $t\in[r_{\alpha,\beta},1]$, ${}^t\mathds{A}_{[\alpha],[\beta]}$ is fully invariant under $\mathds{T}_{[\alpha],[\beta]}$.
        \item if $\alpha\notin \mathcal{B}$ then for all $y\geq 0$, the orbit of $iy$ under $\tilde{T}_{\alpha,\beta}$ is dense in ${}^yI_{-1}$
        \item if $\alpha \in \mathcal{B}$ then for all $y$ with $0\leq y\leq p_{-1}(0)$, the orbit of $iy$ under $\tilde{T}_{\alpha,\beta}$ is dense in:
        $$
        \{ w\in {}^yI_{-1}| \mathrm{Im(w)}\leq p_{-1}(\mathrm{Re(w))}\}
        $$
        
    \end{itemize}
\end{proposition}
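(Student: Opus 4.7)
All three assertions reduce to the same machinery: the nested sets ${}^y I_n$ are built one level at a time through the tower of maps $\mathcal{Y}_{n,m}$, and $\tilde{T}_{\alpha,\beta}$ is itself defined by descending the tower, adding $1$ at the deepest visited level, and climbing back. The plan is therefore to prove each statement level-by-level along the tower, closing the argument using the uniform contraction factor $0.9^{k}$ of $\mathcal{Y}_{n,n+k}$ and the functional relations (F1), (F2), (F3) of the $Y_{r,s}$.

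For item (i), pick $w_{-1} \in {}^y I_{-1}$ and let $(w_i, l_i)$ be its trajectory as in the definition of $\tilde{T}_{\alpha,\beta}$. By construction of ${}^y I_n$, each $w_i$ satisfies $\mathrm{Im}(w_i) \geq y_i - 1$, so $w_i \in {}^y I_i$. In the generic case, the trajectory reaches some $w_n \in {}^y K_n$, and then $w_n + 1 \in {}^y I_n$ by the horizontal periodicity of ${}^y I_n$ inherited from $I_n$ via (F1). Climbing back up through $(\mathds{Y}_0 + \tfrac{\epsilon_0+1}{2}) \circ \cdots \circ (\mathds{Y}_n + \tfrac{\epsilon_n+1}{2})$ keeps the image in ${}^y I_{-1}$, since each $\mathds{Y}_i$ preserves vertical lines and the height inequality defining ${}^y I_i$ is preserved (the $y_i$ were defined exactly so that this happens). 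The limit case follows by continuity and the $0.9^n$-contraction. Backward invariance is the same argument with the roles of $w_n$ and $w_n + 1$ swapped.

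For items (ii) and (iii), fix a target $w^* \in {}^y I_{-1}$ (with $\mathrm{Im}(w^*) \leq p_{-1}(\mathrm{Re}(w^*))$ in the Brjuno case) and $\epsilon > 0$. Choose $n$ large enough that $0.9^{n+1} \cdot \mathrm{diam}({}^y I_{n+1}) < \epsilon$, and lift $w^*$ through $\mathcal{Y}_{-1,n+1}$ to some $w^*_{n+1} \in {}^y I_{n+1}$. It suffices to find an iterate $N$ such that the level-$(n+1)$ lift of $\tilde{T}^{\circ N}_{\alpha,\beta}(iy)$ lies within a horizontal unit of $w^*_{n+1}$: applying $\mathcal{Y}_{-1,n+1}$ then brings it within $\epsilon$ of $w^*$. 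This amounts to an Ostrowski/continued-fraction density statement: by varying which level the ``$+1$'' is added along the orbit of $iy$, one can realise any admissible integer shift at level $n+1$, and the irrationality of $\alpha_{n+1}$ then gives density of horizontal positions in $[0, 1/\alpha_{n+1}]$. In the Brjuno case the inductive inequality $y_{n+1} \leq p_{n+1}(0)$, which follows from $y \leq p_{-1}(0)$ together with the recursive definition $p^{j+1}_n(x) = \mathrm{Im}\,\mathds{Y}_{n+1}(x_{n+1} + i p^{j}_{n+1}(x_{n+1}))$ and the monotonicity of $\mathds{Y}_{n+1}$ on vertical lines, prevents the orbit from ever climbing above the peak function.

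The main obstacle is exactly this density step: verifying that as one varies the depth at which $+1$ is added, the resulting positions at level $n+1$ really do fill the strip $[0, 1/\alpha_{n+1}]$ densely in the real part. This is the direct analogue of Cheraghi's density argument in \cite{cheraghi2022arithmetic}, and once translated into the trajectory language of $\tilde{T}_{\alpha,\beta}$ it is essentially an arithmetic statement about Ostrowski expansions. A secondary technical point in the Brjuno case is controlling the interaction between the orbit height and the peak envelope; the key input is that $p^{j}_n$ is a \emph{decreasing} family with limit $p_n$ and that $\mathds{Y}_{n+1}$ is strictly increasing on vertical lines, so the condition $y \leq p_{-1}(0)$ lifts coherently down the tower.
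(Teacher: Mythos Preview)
Your overall strategy matches the paper's: for item (i) you unwind the definition of $\tilde{T}_{\alpha,\beta}$ through the trajectory $(w_i,l_i)$ and check membership in each ${}^yI_i$, and for items (ii)--(iii) you descend the tower, invoke a density statement at a deep level, and pull back via the $0.9^{n+1}$ contraction. The paper's own proof is extremely terse (``the orbit of $y$ will follow the curves that constitute the definition of ${}^yI_{-1}$''), so your write-up is in fact more detailed than what the paper offers.

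There is, however, a genuine gap in your reduction for items (ii)--(iii). The condition ``$0.9^{n+1}\cdot\mathrm{diam}({}^yI_{n+1})<\epsilon$'' is vacuous: each ${}^yI_{n+1}$ contains vertical half-lines and has infinite diameter. More importantly, you reduce to finding an iterate whose level-$(n+1)$ lift is within a \emph{horizontal} unit of $w^*_{n+1}$, but this says nothing about the vertical gap. The orbit of $iy$, lifted to level $n+1$, sits near height $y_{n+1}$, whereas $w^*_{n+1}$ can sit at any height $\geq y_{n+1}-1$ in ${}^yI_{n+1}$; contraction by $0.9^{n+1}$ does not help if that vertical separation is unbounded in $n$. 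The paper's one-line remedy is the phrase ``the orbit follows the curves that constitute the definition of ${}^yI_{-1}$'': the point is that the orbit of $iy$ does \emph{not} stay at fixed height at any level --- as you vary how deep the ``$+1$'' is inserted, the resulting points at level $-1$ trace out the boundary curves of the nested sets ${}^yI_{-1}^j$ for all $j$, and it is these curves (not a single deep horizontal line) that approximate every point of ${}^yI_{-1}$ below the peak. So the correct reduction is two-dimensional: first pick $j$ so that the boundary curve of ${}^yI_{-1}^j$ passes near $w^*$ (this is exactly where the non-Brjuno hypothesis, or the $\mathrm{Im}\,w^*\le p_{-1}$ constraint in the Brjuno case, is used), and only then use Ostrowski density \emph{along that curve} to land near $w^*$. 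Your final paragraph shows you sense this is where the difficulty lives; the fix is to abandon the fixed-deep-level-plus-horizontal-density picture and work instead with the family of curves.
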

\begin{proof}
    The first point follows from a simple analysis of the ${}^yI_{-1}$ spaces, then the other two can be solved at once by proving that as long as $z\in I_{-1}$ with $\mathrm{Im}z\leq p_{-1}(\mathrm{Re}(z))$ there will be some element in the orbit of $y$ in a neighborhood of $z$. But this will follow because the orbit of $y$ will follow the curves that constitute the definition of ${}^yI_{-1}$
\end{proof}
This then leads to the following result:
\begin{proposition}
    For any $(\alpha,\beta)\in\mathds{R}\setminus \mathds{Q}\times\mathds{R}$ and any $t\in[r_{\alpha,\beta},1]$, $\omega(t)={}^t\mathds{A}_{[\alpha],[\beta]}$ and in addition:
    \begin{itemize}
        \item if $s>t$, $s\notin \omega(t)$
        \item the orbit of $+1$ is dense in $\mathds{A}_{[\alpha],[\beta]}$
    \end{itemize}
    On the other hand, this classifies the closed invariant sets fully, as if $X$ is a closed invariant set of $\mathds{T}_{[\alpha],[\beta]}$ on $\mathds{A}_{[\alpha],[\beta]}$ then there is a $t\in[r_{\alpha,\beta},1]$ such that $X={}^t\mathds{A}_{[\alpha,\beta]}$.
\end{proposition}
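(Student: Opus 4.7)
The plan is to derive all four assertions from the prior proposition (density of the orbit of $iy$ in ${}^y I_{-1}$) combined with the nested, totally ordered, closed and $\mathds{T}_{\alpha,\beta}$-invariant family $\{{}^t\mathds{A}_{\alpha,\beta}\}_{t\in[r_{\alpha,\beta},1]}$. For $\omega(t)={}^t\mathds{A}_{\alpha,\beta}$ I argue by two inclusions: $\omega(t)\subseteq{}^t\mathds{A}_{\alpha,\beta}$ because the latter is closed, invariant and contains $t$; conversely, writing $t=e^{-2\pi y}$ and lifting to $iy\in I_{-1}$, the density of $\{\tilde T_{\alpha,\beta}^k(iy)\}$ in ${}^y I_{-1}$ (respectively its portion below $p_{-1}$ in the Brjuno case) projects via $w\mapsto s(e^{2\pi i w})$ to density of $\{\mathds{T}_{\alpha,\beta}^k(t)\}$ in ${}^t\mathds{A}_{\alpha,\beta}$.

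The two bulleted items then follow near-immediately. For $s>t\Rightarrow s\notin\omega(t)$, invoke the nesting ${}^t\mathds{A}_{\alpha,\beta}\subsetneq{}^s\mathds{A}_{\alpha,\beta}$ together with $s\notin{}^t\mathds{A}_{\alpha,\beta}$, both stated just before the proposition. Setting $t=1$ and using ${}^1\mathds{A}_{\alpha,\beta}=\mathds{A}_{\alpha,\beta}$ gives density of the forward orbit of $+1$.

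For the classification, let $X\subseteq\mathds{A}_{\alpha,\beta}$ be nonempty, closed and $\mathds{T}_{\alpha,\beta}$-invariant, and define
$$
t_X=\sup\{\,s\in[r_{\alpha,\beta},1]:s\in X\,\},
$$
with the convention $t_X=r_{\alpha,\beta}$ if $X\cap[r_{\alpha,\beta},1]=\emptyset$. Granted that $t_X\in X$ (immediate from closedness of $X$), the first part gives ${}^{t_X}\mathds{A}_{\alpha,\beta}=\omega(t_X)\subseteq X$. Conversely, for any $z\in X$ let $t(z)\in[r_{\alpha,\beta},1]$ be the minimal value with $z\in{}^{t(z)}\mathds{A}_{\alpha,\beta}$; the containment $z\in{}^{t(z)}\mathds{A}_{\alpha,\beta}\subseteq{}^{t_X}\mathds{A}_{\alpha,\beta}$ will follow once we know $t(z)\leq t_X$.

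The main obstacle is therefore showing $t(z)\leq t_X$ for every $z\in X$, which amounts to showing that $\omega(z)$ contains a real point in $[r_{\alpha,\beta},1]$. I would extend the prior density proposition from the distinguished point $iy$ to any lift $w^*$ of $z$ whose imaginary part equals the height associated to $t(z)$, by exploiting the equidistribution of the tangential rotation by $2\pi\alpha$ under $\tilde T_{\alpha,\beta}$ combined with the $0.9^n$-contractivity of the changes of coordinates $\mathcal Y_{n,n+k}$. Together these force some iterate of $w^*$ to approach the imaginary axis arbitrarily closely, so that $t(z)\in\omega(z)\subseteq X$ and hence $t(z)\leq t_X$ by the definition of $t_X$.
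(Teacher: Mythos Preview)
Your approach matches the paper's brief sketch: derive $\omega(t)={}^t\mathds{A}_{\alpha,\beta}$ from the prior density proposition, then classify a closed invariant $X$ by locating a real parameter inside $X$ via the contraction of the $\mathcal{Y}_{n,n+k}$. Two small points of care: first, the claim ``$t_X\in X$ immediate from closedness'' presupposes $X\cap[r_{\alpha,\beta},1]\neq\emptyset$, which you only establish afterwards via the extended density argument, so the logic should be reordered; second, the lift $w^*$ of $z$ has fixed imaginary part $-\tfrac{1}{2\pi}\log|z|$, which is generally not the height $y$ with $t(z)=e^{-2\pi y}$, so the phrase ``lift $w^*$ whose imaginary part equals the height associated to $t(z)$'' is off---what you actually need is that $w^*$ lies on the base curve of ${}^yI_{-1}$ for that $y$, after which the density argument for $iy$ carries over to $w^*$ by the same rotation-plus-contraction mechanism you describe.
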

\begin{proof}
    The first part is an immediate consequence of the prior corollary. For the latter part we must find an element in $X$ that will be the $t$ such that $X={}^t\mathds{A}_\alpha$. To do this, we take pre-images of the set $X$ under the $\mathcal{Y}$ maps. Each pre-image will obtain a new set, $X_n\subset I_n$. Each of these sets will have a minimum height, say $\min \mathrm{Im}(X_n)$. Now take forward images under the $\mathcal{Y}$ maps of the function defined by $h^0_n(x)= \min \mathrm{Im}(X_n)-5$. By the contraction property, the sets that are obtained under this process will approximate $X_n$. There will be a limit function, $h_{-1}$ that gives the exact boundary of $X_{-1}$, and now because the set is invariant under the map, and because of the way the $h^j_n$ were defined using the changes of coordinates, we obtain that $X_{-1}$ is equal the limit of the orbit of $h_{-1}(0)$.
\end{proof}
\begin{proposition}
    There is a classification of the topologys:
    \begin{itemize}
        \item if $\alpha\notin \mathcal{B}$, for every $t\in(r_{\alpha,\beta},1]$, ${}^t\mathds{A}_{[\alpha],[\beta]}$ is a cantor bouquet;
        \item if $\alpha \in \mathcal{B}\setminus\mathcal{H}$, for every $t\in (r_{\alpha,\beta},1]$, ${}^t\mathds{A}_{[\alpha],[\beta]}$ is a one-sided hairy Jordan curve.
    \end{itemize}
    And in addition the map $t\mapsto {}^t\mathds{A}_{[\alpha],[\beta]}$ on $t\in [r_{\alpha,\beta},1]$ is continuous with respect to the hausdorff metric on compact subsets of $\mathds{C}$
\end{proposition}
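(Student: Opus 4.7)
The plan is to reduce the topological statements at height $t$ to properties of shifted base and peak functions ${}^y b_n$ and ${}^y p_n$, mirroring the construction already used for $t=1$, and to deduce continuity from the uniform contraction factor $0.9^k$ of $\mathcal{Y}_{n,n+k}$. Fix $t\in(r_{\alpha,\beta},1]$ and pick $y\geq 0$ with $t=e^{-2\pi y}$, and let $y_n$ be the shifted heights already defined in the section. Since ${}^y I_n^j$ is built by exactly the same recursion as $I_n^j$ but with the initial vertical line shifted from $-1$ to $y_n-1$, every step in the accumulation-of-hairs proposition and the dense-equality lemma goes through verbatim. This yields that ${}^y b_{-1}$ is the $\liminf$ from both sides, and (in the Brjuno case) that the set $\{x:{}^y b_{-1}(x)={}^y p_{-1}(x)\}$ is dense in $[0,1]$.

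Next I would translate these properties through $\pi(w):=s(e^{2\pi i w})$ into a radial hair-length function $R_t(\theta)$ on $\mathds{R}/\mathds{Z}$. With the convention ${}^y p_{-1}\equiv+\infty$ when $\alpha\notin\mathcal{B}$, set
\[
R_t(\theta) := e^{-2\pi\,{}^y b_{-1}(-\theta)} - e^{-2\pi\,{}^y p_{-1}(-\theta)}.
\]
The $\liminf$ property of ${}^y b_{-1}$ forces $R_t$ to satisfy $\limsup_{\theta'\to\theta^\pm}R_t(\theta')=R_t(\theta)$, the dense coincidence set $\{{}^y b_{-1}={}^y p_{-1}\}$ produces a dense zero set of $R_t$, and the existence of $x_n$ with ${}^y b_{-1}(x_n)$ close to its supremum (which is finite for $t>r_{\alpha,\beta}$) gives a dense positivity set. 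In the non-Brjuno case ${}^yp_{-1}\equiv+\infty$ trivialises the second term and $R_t$ is just $e^{-2\pi\,{}^y b_{-1}(-\theta)}$, which is compactly supported in $[0,t]$ with the same density and semicontinuity properties.

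With the axiomatic properties of $R_t$ in hand, I would invoke the ambient characterisation of Cantor bouquets (respectively one-sided hairy Jordan curves) cited as \cite{AO}. To meet the mild topological-smoothness hypothesis required for that characterisation, I would use the small-scale relations of the grand change of coordinates $\mathcal{Y}_{n,n+k}$ recorded in the remark after its definition: any neighbourhood of a hair tip of ${}^t\mathds{A}_{\alpha,\beta}$ can be approximated, up to an error of order $0.9^k$, by a rescaled copy of the construction $k$ levels deeper, giving the required uniform local self-similar structure. This step, together with verifying that the Jordan curve boundary in the Brjuno$\setminus$Herman* case arises as the image of $p_{-1}$ and carries the hairs as one-sided attachments, is the main obstacle: the axiomatic verifications are fairly mechanical, but packaging the self-similarity into a clean topological smoothness statement requires care.

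Finally, for the continuity claim, if $|y-y'|\leq\delta$ then $|y_n-y'_n|\leq\delta$ for every $n$, so the initial vertical lines defining ${}^yI_n^0$ and ${}^{y'}I_n^0$ are within Hausdorff distance $\delta$; propagating through the tower using the uniform contraction $0.9^k$ of $\mathcal{Y}_{n,n+k}$ shows that ${}^yI_n^j$ and ${}^{y'}I_n^j$ remain within Hausdorff distance $C\delta$ uniformly in $j$, and hence their limits ${}^yI_n$ and ${}^{y'}I_n$ do too. The map $\pi$ is uniformly continuous on $\{w\in\mathds{H}':\mathrm{Im}(w)\geq -1\}$ with modulus controlled by the height, so the resulting continuity transfers to $t\mapsto{}^t\mathds{A}_{\alpha,\beta}$ on $[r_{\alpha,\beta},1]$, completing the proof.
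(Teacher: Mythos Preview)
Your approach matches the paper's, which simply says the result ``follows essentially word for word from the same result but for the overall topology''; you have supplied the details that the paper omits, namely the shifted base/peak functions ${}^yb_n,{}^yp_n$ and the verification of the Cantor-bouquet/hairy-curve axioms.

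One small gap in your continuity argument: the claim that $|y-y'|\leq\delta$ implies $|y_n-y'_n|\leq\delta$ for all $n$ is not justified, since $y_{n+1}=\mathrm{Im}\,\mathds{Y}_{n+1}^{-1}(iy_n)$ and $\mathds{Y}_{n+1}^{-1}$ is an \emph{expansion}, not a contraction. The fix is that you do not need this intermediate bound: whatever expansion occurs in passing from level $-1$ to level $n$ is exactly undone by the contraction $0.9^{n+1}$ of $\mathcal{Y}_{-1,n}$ when you map ${}^yI_n^0$ back to ${}^yI_{-1}^{n+1}$, so the Hausdorff distance between ${}^yI_{-1}^j$ and ${}^{y'}I_{-1}^j$ is bounded by $C\delta$ uniformly in $j$ as you claim, and the conclusion stands.
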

\begin{proof}
    This follows essentially word for word from the same result \ref{trich} but for the overall topology, now using the ${}^y I_n$ sets instead. Interestingly, this proves that one cannot find disjoint invariant ``hedgehog" sets in the dynamical system.
\end{proof}

\begin{proposition}
    There is also a symmetry between the two critical points:
        \begin{equation*}
            \begin{aligned}
                &(\mathds{T}_{[\alpha],[-\beta]}: \mathds{M}_{[\alpha],[-\beta]} \rightarrow \mathds{M}_{[\alpha],[-\beta]}) 
                \cong ( \mathds{T}_{[\alpha],[\beta]}: \mathds{M}_{[\alpha],[\beta]} \rightarrow \mathds{M}_{[\alpha],[\beta]})
            \end{aligned}
        \end{equation*}
        via a conjugation that is uniformly close to the rotation map $z\mapsto e^{2\pi i \beta}z$
\end{proposition}
\begin{proof}
    Note that $\mathds{M}_{[\alpha],[-\beta]}$ will include the line segment $[0,1]$, and $\mathds{M}_{[\alpha],[-\beta]}$ will also include the segment $[0,1]e^{2\pi i\beta}$ as in proposition \ref{+1beta}. Let $R_{\alpha,\beta}(x)$ for $x\in[0,1]$ denote the maximal ray lying in $\mathds{M}_{[\alpha],[\beta]}$ at the internal angle $x$. There exists a uniform constant $C>0$ such that:
    $$
    C^{-1}R_{[\alpha],[\beta]}\subset e^{2\pi i\beta}R_{[\alpha],-[\beta]}\subset C R_{[\alpha],[\beta]}.
    $$ 
    To see this, note that because of the third functional relation, (F3), each of the change of coordinates for the $([\alpha],-[\beta])$ case is the equal to the change of coordinates for $([\alpha],-[\beta])$ but translated over by $\beta$. Therefore, the only difference between the two sets, up to translation, can arise in the way the $I_n^j$ are defined. In particular, since the height and peak functions depend on the arithmetic in question, it is clear that there exists a constant such that for all $n$:
    \begin{equation*}
        \begin{aligned}
            |p_n^+(x-\beta_n/\alpha_n)-p_n^-(x)| \leq C & & |b_n^+(x-\beta_n/\alpha_n)-b_n^-(x)|\leq C
        \end{aligned}
    \end{equation*}
    Note that the differences arise because $J_n^j$ in each case will lie in different places, leading to a slightly modified curve in the grand change of coordinates map $\mathcal{Y}_{n,m}$. However, this difference will be at most $1$. Because of the contractivity property, we can then conclude that the heights of the $b_n$ and $p_n$ in each case will be bounded with a constant of $10$ of each other. Thus the equation above is true with $C=e^{20\pi} $.
    \par
    Now define a map $U:\mathds{M}_{[\alpha],[-\beta]} \rightarrow \mathds{M}_{[\alpha],\beta}$ via the action on the line segment $[0,1]$ being $U(x)=xe^{2\pi i\beta}$, and extend definition this via the map $\mathds{T}_{[\alpha],[\beta]}$ itself. This extends $U$ to a homeomorphism, because the closure of the orbit of $[0,1]$ is $\mathds{M}_{[\alpha],[\beta]}$ itself. Because the $p_n$ and $b_n$ are bounded by each other, this definition actually makes sense, and by definition it will be a conjugation.
\end{proof}
\begin{theorem}[Non-equivalence of different $\beta$]
For every $\alpha$ of non-brjuno type, there exists a $\beta$ such that $(\mathds{M}_{[\alpha],[\beta]},\mathds{T}_{[\alpha],[\beta]})$ is dynamically distinct from $(\mathds{M}_{\alpha,0},\mathds{T}_{[\alpha],[\beta]})$. 
\end{theorem}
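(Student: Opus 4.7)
The plan is to suppose for contradiction that a topological conjugacy $h : (\mathds{M}_{\alpha,0}, \mathds{T}_{\alpha,0}) \to (\mathds{M}_{\alpha,\beta}, \mathds{T}_{\alpha,\beta})$ exists, and to isolate a topological invariant of the pair $(\mathds{M}_{\alpha,\gamma}, \mathds{T}_{\alpha,\gamma})$ that records the ``number of critical directions'' and which differs between $\gamma = 0$ and a dense set of $\gamma \neq 0$. First, since $0$ is the unique $\mathds{T}$-fixed point we have $h(0)=0$. The classification of closed invariant sets established in the previous section forces $h$ to preserve the stratification $\{{}^t\mathds{A}_{\alpha,\gamma}\}_{t\in[0,1]}$: more precisely, $h$ sends each ${}^t\mathds{A}_{\alpha,0}$ to some ${}^{\phi(t)}\mathds{A}_{\alpha,\beta}$ for a homeomorphism $\phi \colon [0,1]\to[0,1]$ with $\phi(0)=0$ and $\phi(1)=1$, and so it must send the topologically distinguished \emph{extremal set}
\[
E_{\alpha,\gamma}:=\mathds{A}_{\alpha,\gamma}\setminus\bigcup_{s<1}{}^s\mathds{A}_{\alpha,\gamma}
\]
to its counterpart: $h(E_{\alpha,0})=E_{\alpha,\beta}$.

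The second stage is to analyse $E_{\alpha,\gamma}$ using the explicit construction. Points of $E_{\alpha,\gamma}$ are images under $w \mapsto s(e^{2\pi i w})$ of those $w \in I_{-1}$ that are not pushed strictly inward by any nontrivial up-shift of the height in the tower, which translates to $w$ lying over a \emph{global infimum} of the base function $b_{-1}^{\alpha,\gamma}$. In the unicritical setting, Cheraghi's analysis of the $Y_r$ coordinate yields a single global minimum of $b_{-1}^{\alpha,0}$ per period, namely at $x=0$, whose image is $+1$; hence only a single $\mathds{T}_{\alpha,0}$-orbit of ``bottom-of-base'' points is contained in $E_{\alpha,0}$. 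In the bi-critical setting, the key new input is the identity $\mathds{Y}_n(\beta_n/\alpha_n)=-\epsilon_n\beta_n$ used to prove $e^{\pm 2\pi i\beta}\in\mathds{M}_{\alpha,\beta}$: propagating this through the tower the way that $+1$ is propagated shows that $b_{-1}^{\alpha,\beta}$ has a \emph{second} global minimum of the same height at $x=\pm\beta$, whose image is $e^{\pm 2\pi i\beta}$. Choosing $\beta$ outside the countable set $\{k\alpha+\ell : k,\ell\in\mathds{Z}\}\cup\{\text{iterates of }+1\}$ — a dense, in fact co-countable, subset of $\mathds{R}$ — these two minima yield two genuinely distinct $\mathds{T}_{\alpha,\beta}$-orbits in $E_{\alpha,\beta}$.

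The final step is to turn this count into a topological obstruction; this is the hardest part. The naive statement ``$E$ contains one orbit vs.\ two orbits'' is too weak, because $E$ in each case is invariant and dense-ish, so a priori it may contain many further orbits from accumulation. The plan is to distinguish the ``critical'' orbits inside $E$ intrinsically: I would characterise them as the unique orbits along which the parameter $\phi(t)$ of the stratification has a specific sharp rate of approach to $1$ — equivalently, the orbits for which the return-time asymptotics to arbitrarily small topological neighbourhoods realise the sharp lower bound coming from $\mathcal{M}(\alpha_n,\beta_n)$ in Proposition~\ref{height}. By Main Theorem~A(iv) and Proposition~\ref{hermy} this rate is controlled by the height function $b_{-1}$ at global minima, so a purely topological reading of the return-time gauge singles out exactly the critical orbits. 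The conjugacy $h$ then produces a bijection between the critical-orbit sets on the two sides; since this set contains one orbit on the unicritical side and at least two on the bi-critical side for the chosen $\beta$, we obtain the desired contradiction. The principal technical hurdle is verifying that the return-time asymptotics furnished by the change-of-coordinates estimates really are topologically (not just quantitatively) invariant under $h$, which will require using the uniform contractivity of the $\mathcal{Y}_{n,n+k}$ together with the Hausdorff continuity of $\mathds{M}_{\alpha,\beta}$ to propagate the one-sided bound through arbitrary homeomorphisms commuting with $\mathds{T}$.
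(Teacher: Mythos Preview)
Your approach differs substantially from the paper's and contains a real gap in the second stage. You claim that $E_{\alpha,\gamma}$ consists of points lying over a \emph{global minimum} of $b_{-1}$, hence a single orbit when $\gamma=0$. This is false: for every $x$ with $b_{-1}(x)<\infty$ and every $y>0$ one has ${}^{y}b_{-1}(x)>b_{-1}(x)$ strictly, because each $\mathds{Y}_n$ is strictly increasing in the imaginary direction and the shifted starting heights $y_n$ are all positive; hence the endpoint $x+ib_{-1}(x)$ lies outside every ${}^{y}I_{-1}$, and $E_{\alpha,\gamma}$ is the set of \emph{all} ray-endpoints --- an uncountable union of orbits in both models. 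You anticipate something like this and fall back on a return-time gauge to single out the ``critical'' orbits, but you neither define that gauge precisely nor show it is a conjugacy invariant, and that is where the entire difficulty of your argument now sits.

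The paper sidesteps this by using a much coarser invariant: not an orbit-count inside $E$, but the presence or absence of a ray at a fixed angle. Take $\beta$ with $\beta_n\in[1/2-\alpha_n,1/2]$ for every $n$; then the trajectory $x_n:=\beta_n/\alpha_n$ lies in $[1/(2\alpha_n)-1,1/(2\alpha_n)]$, i.e.\ at the maximum of the unicritical height function $Y_{\alpha_n,0}$, for every level. Since $\alpha$ is non-Brjuno this forces $b_{-1}^{\alpha,0}(\beta)=+\infty$, so $\mathds{M}_{\alpha,0}$ meets the angle $\beta$ only in $\{0\}$, whereas $\mathds{M}_{\alpha,\beta}$ carries a full segment there because $e^{2\pi i\beta}\in\mathds{M}_{\alpha,\beta}$. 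Any conjugacy fixes $0$ and must send arc-components of $\mathds{M}\setminus\{0\}$ to arc-components, so it cannot collapse a segment to a point; the contradiction is immediate, with no need to isolate critical orbits or analyse rates of approach in the stratification.
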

\begin{proof}
    
To prove this it is sufficient to find an $x\in[0,1]$ such that $b_n'(x)=\infty$ while $b_n'(x)\neq \infty$. We choose $x$ such that $x_n\in[1/2\alpha_n-1,1/2\alpha_n]$ for all $n$. In the uni-critical change of coordinates it was previously proven that $b_n'(x_n) = \infty$ as the maximum of $Y_{r,0}$ is at $1/2\alpha_n-1/2$. Now if we choose $\beta$ such that $\beta_n\in[1/2-\alpha_n,1/2]$ it is also clear that since the height at $\beta_n$ is preserved we have $b_n(\beta_n/\alpha_n)< \infty$. But in fact $\beta_n=x_n$ and hence we have found our $x$. Why is this sufficient? If there were a dynamical equivalence that preserved the rotation angle of $\mathds{T}_{[\alpha],[\beta]}$, then this homeomorphism must send angles to themselves. But then this is clearly a contradiction as the homeomorphism must send a line to a point. 
\par
This also implies that the map will have genuinely different dynamics. Near this line the map will accumulate at $0$ on one set, while it will accumulate on a line for the other map. This proof easily extends to a dense set of $\beta$ by simply allowing the $b_n$ to take on any value up until some $N>1$, and then imposing the above restriction.

\end{proof}

\section{Dynamical Curves: Why the Model is Justified}

In this section we briefly outline why the bi-critical geometry that the model exhibits is natural. The intuition is that the iterates of the critical points for maps with a bi-critical irrationally indifferent attractor will follow a particular geometric curve that depends on $\alpha$ and $\beta$, up until the $\lfloor1/|\alpha| \rfloor$ iterate, at which point we pass to the renormalisation.

\begin{figure}[H]
\captionsetup{justification=centering}
    \centering
\includegraphics[width=0.4\textwidth]{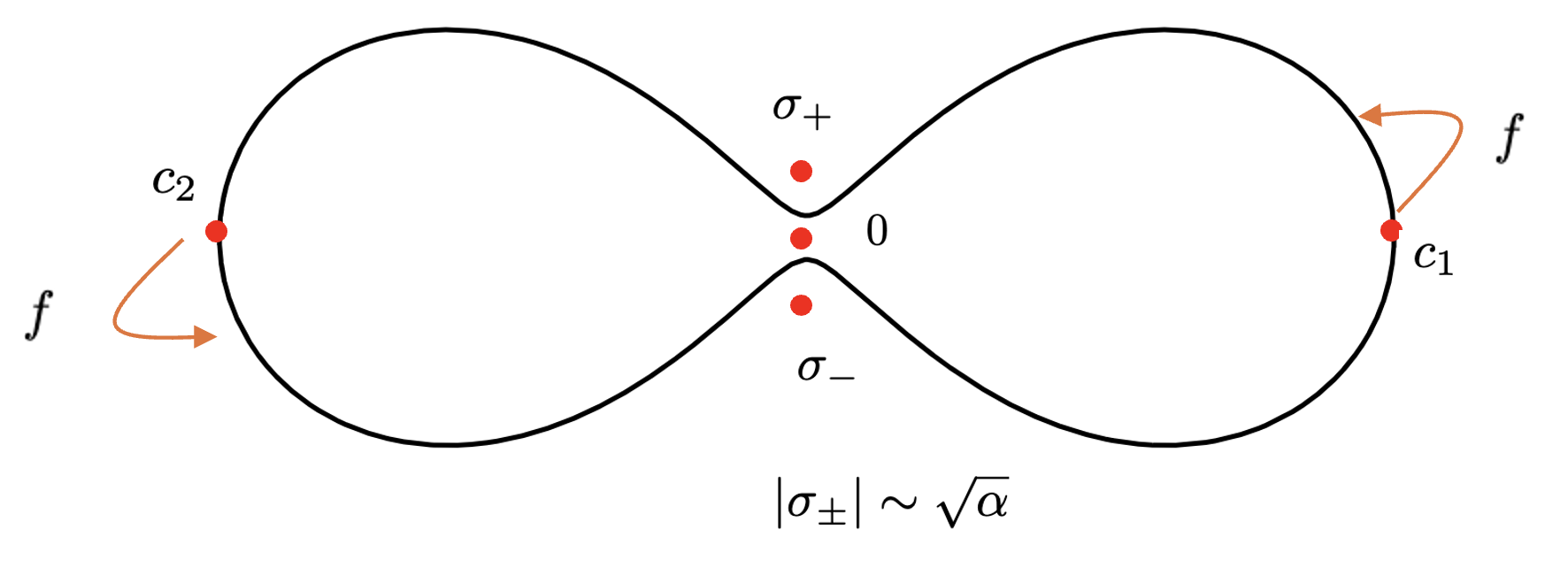}

    \caption{An orbit at $\beta=1/2$}
 \end{figure}
The model works by ``straightening" this dynamics, so that the dynamical angles of iterates become the arguments of points.
\begin{figure}[H]
\captionsetup{justification=centering}
    \centering
\includegraphics[width=0.2\textwidth]{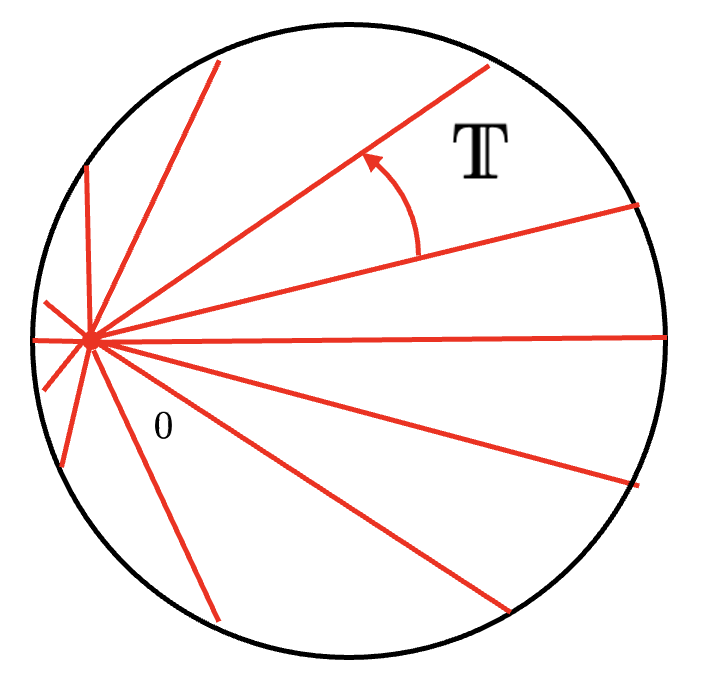}

    \caption{``Straightened" dynamics for the uni-critical toy model.}
\end{figure}
In order to develop the initial toy model for the uni-critical case, Cheraghi employed the geometry described by Shishikura in $\cite{bifurmitsu}$. The key ingredient for this geometry is a single repelling fixed point, $\sigma\asymp\alpha$, which will lie at a dynamical angle opposite to the critical point.
\begin{lemma}
    Let $f$ be a rational map with an irrationally indifferent fixed point at $0$ with rotation angle $\alpha$. Whenever the second derivative of $f$ at $0$ is not equal to $0$, the only degenerate fixed point will be roughly of size $\alpha$.
\end{lemma}

\begin{proof}
    The fixed points of $f$ will be solutions to the following equation for $A\neq0$:
    $$
    f(z)-z = z((e^{2\pi i\alpha}-1) + Az + o(z^2)) = 0
    $$
    which will have solutions at $z=0$ and $z= (e^{2\pi i\alpha}-1)/(A-o(z))$. Although the latter does not strictly count as a solution, since we are interested in the degenerating fixed points (fixed points that can be arbitrarily close to zero), we can suppose that $\alpha$ is small enough in comparison to $|A|$ that the $o(z)$ factor cannot have a large impact, therefore allowing us to conclude that the fixed point near zero will be asymptotic to $\alpha$. The function may have other fixed points, but they will not be degenerate as they will have to lie away from zero. This is because the product of all the fixed points apart from $0$ will have to be equal to $(e^{2\pi i\alpha}-1)$, but since we already have one fixed point of that rough size, all the others must have modulus within a constant of $1$.
\end{proof}

But to justify the model, we need to understand the situation where this $A$ parameter may no longer be bounded away from $0$. The first natural place to look is the cubics, in particular let us choose the parameter space described by Zakeri:
$$
P_c : z \rightarrow \lambda_\alpha z (1 -\frac{1}{2}(1+\frac{1}{c})z + \frac{1}{3c}z^2)
$$
These cubics have an irrationally indifferent fixed point at $0$, and critical points at $+1$ and $c$. Every cubic with an irrationally indifferent fixed point is conjugate to at least one map of the above form (in fact $2$ in general, as there is a symmetry swapping the critical points).
\par
The first question is whether maps satisfying definition $\ref{bicritdef}$ even exist in this parameter space. The answer is known for $\alpha$ of bounded type, in which the parameter space is a Jordan curve called the Zakeri curve $\mathcal{Z}_{\alpha}$ \cite{Zakeri_1999}, that is parameterized by the $\beta$ parameter. In general this is unknown, however should the model hold, it implies that the Zakeri curve exists for all $\alpha$. The Zakeri curve can have quite disparate geometry, however:
\begin{lemma}
    No matter the rotation number $\alpha$, $\{+1,-1\}\subset\mathcal{Z}_{\alpha}\subset \mathds{A}_{1/30,30}$ (the annulus between $1/30$ and $30$). The points $+1$,$-1$ correspond to $\beta=0$ and $\beta=1/2$ respectively.
\end{lemma}
This lemma essentially gives a hint of \textit{a-priori bounds} on the ratios of the two critical points (meaning one critical point cannot be either too close to $\infty$ or $0$ with respect to other).
\begin{proof}
    Firstly $c=+1$ is the uni-critical cubic, which clearly has a bi-critical irrationally indifferent attractor with $\beta=0$. For $c=-1$ we note that there is an affine symmetry of the parameter space that sends $c$ to $-1/c$ via dynamical conjugacy, in effect swapping the marking of the two critical points. This implies that $\omega(c)$ and $\omega(+1)$ are symmetric, and since at least one of these must be the irrationally indifferent attractor, they both must be. This furthermore implies that both are recurrent to each other. The symmetry between the two points also immediately gives that $\beta=1/2$.
    \par
    For the final $\mathcal{Z}_{\alpha}\subset \mathds{A}_{1/30,30}$ statement, note that in \cite{Zakeri_1999} a simple proof that the locus of connectivity lies inside $\mathds{A}_{1/30,30}$ is given. Clearly $\mathcal{Z}_\alpha$ must lie in the locus of connectivity so we can conclude.
\end{proof}
In order to understand the geometry of orbits, we once again have to understand where degenerate fixed points will emerge. Note that each $P_c$ will have fixed points at:
$$
\sigma_{\pm}(c,\alpha) = \frac{3}{4}\left((c+1) \pm \sqrt{(c+1)^2 - \frac{8}{3} c (1-\lambda_{-\alpha})}\right)
$$
where we choose $\sqrt{z}$ to be the principle branch. 
\begin{lemma}
    For a fixed $\alpha$, and any $c\in\mathcal{Z}_{\alpha}$ the two fixed points will satisfy the following asymptotic relations:
    \begin{equation*}
        \begin{aligned}
            |\sigma_+\sigma_-|\asymp \alpha, && \alpha \lesssim \sigma_- \lesssim \sqrt{\alpha}\lesssim\sigma_+\lesssim 1
        \end{aligned}
    \end{equation*}
    while furthermore, whenever the second derivative of $P(z)$ is close to zero, both fixed points will lie at radial dynamical angles that are in-between the critical points.
\end{lemma}
\begin{proof}
    The first result follows from standard algebraic relations on the roots of the polynomial $P(z)-z$. For the second note that clearly $|\sigma_+|\geq|\sigma_-|$ by the choice of branch given, but then note that $\sigma_+$ is also bounded away from infinity since $c$ is, So the only way the inequality can fail is as $c$ approaches $c=-1$. But at $c=-1$ $\sigma_+=-\sigma_-$ which implies that $\sigma_+\asymp\sqrt{\alpha}\asymp\sigma_-$. Then because of the first relation we can also immediately conclude the inequality for $\sigma_-$.
    \par
    For the final portion note that when the second derivative of $P(z)$ is close to zero, $c$ will be close to $c=-1$ which furthermore implies that $\arg(\sigma_-)$ will be close to $-\arg(\sigma_+)$, and both will lie close to the line at angle $3\pi/2$ from the explicit form of both fixed points. Because the critical points will be at $+1$ and near $c=-1$ this implies that they will be at dynamical angles in between the fixed points.
\end{proof}
In order to justify the geometry of the model, the final step is to relate the geometry of the $\sigma_\pm$ to the $\beta$ parameter. 
Should a sector renormalisation exist in the way we desire, it would extend to give a holomorphic map from the half plane $\mathds{H}$ to a certain connected open set $U\subset\mathds{C}$, with compact closure, $0\in U$, and $\sigma_+, \sigma_- \notin U$. Furthermore, $\partial U$ will a dynamical curves that gives the geometry of orbits. If we can understand how the size of the fixed points $\sigma_+$ and $\sigma_-$ influence the Riemann map from $\mathds{H}$ to $U$, then in particular the harmonic measure that arises will allow us to understand the dynamical angles.
\begin{figure}[H]
\captionsetup{justification=centering}
    \centering
\includegraphics[width=0.45\textwidth]{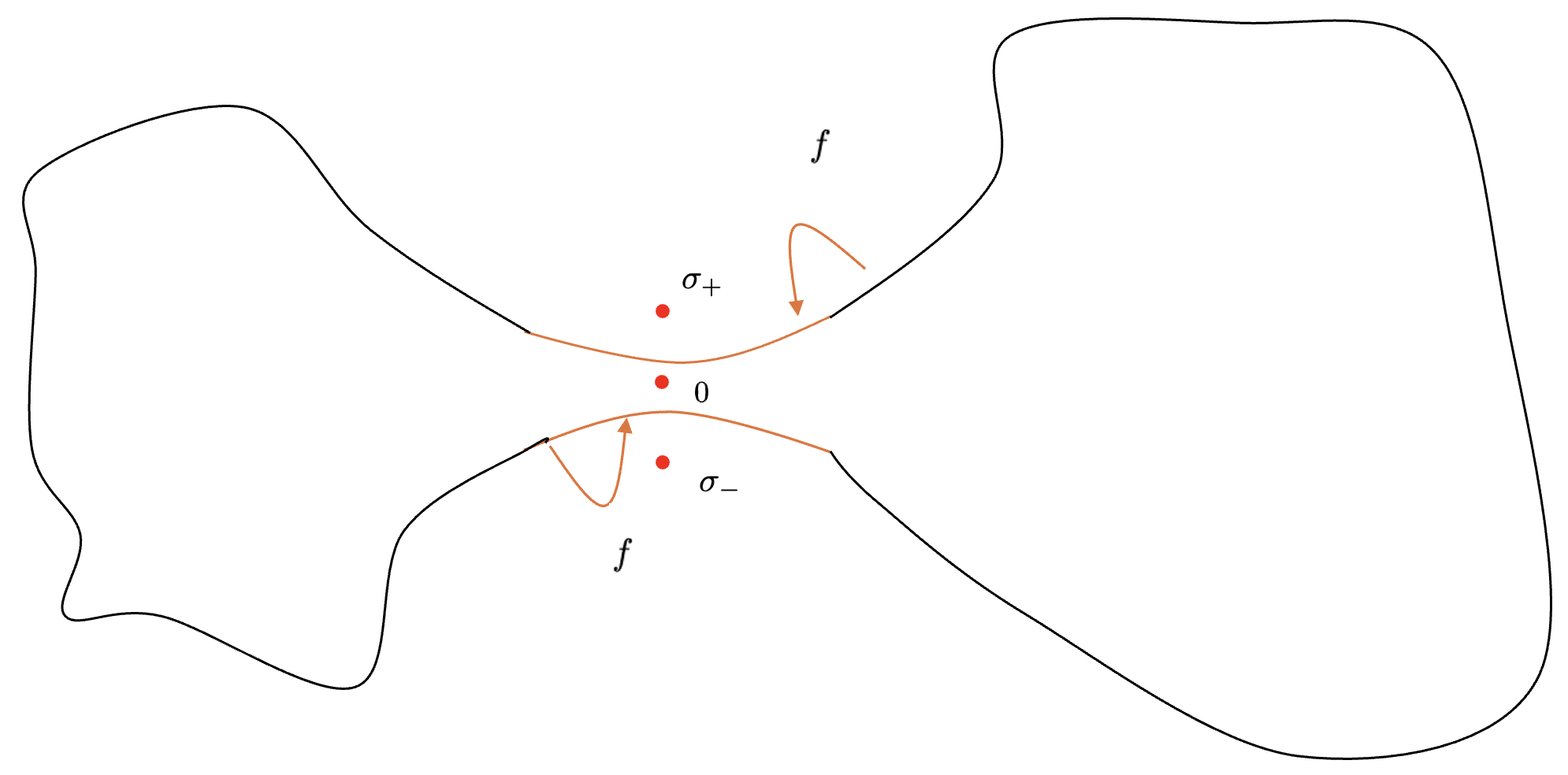}

    \caption{From the perspective of computing the harmonic measure from the point $0$, the two close approaches take priority and what happens elsewhere matters less.}

\end{figure}
Because the two "close approaches" are opposite, the idea is that we can now use the cylinder as a model for our Riemann mapping as long as $\beta$ is not close to $0$. It is a consequence of an inequality from Oudkerk that if $\beta$ is not close to zero then both fixed points must be small and hence we must have $c$ close to $c=-1$ \cite{Oudkerk}. We simply place $0$ at some point such that it is $\sigma_+$ from the top, and $\sigma_-$ from the bottom. 
\begin{figure}[H]
\captionsetup{justification=centering}
    \centering
\includegraphics[width=0.4\textwidth]{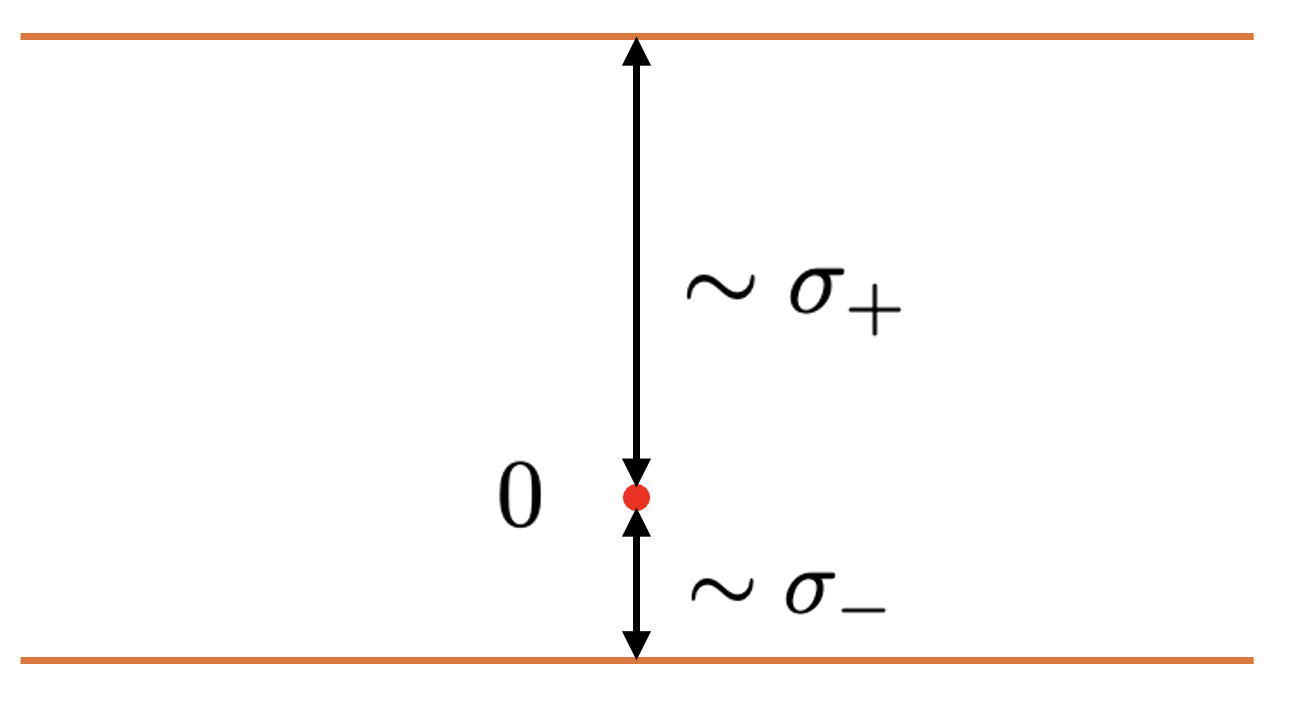}

    \caption{The Cylinder Model}

\end{figure}
\begin{lemma}
\label{harmmeasure}
    Fix $\beta\in(0,1/2]$. If $\sigma_+$ and $\sigma_-$ are such that the harmonic measure of the top of the cylinder is equal to $1-\beta$, then $\sigma_-\asymp \sqrt{|\alpha\beta|}$ and $\sigma_+ \asymp \sqrt {|\alpha/\beta|}$.
\end{lemma}
\begin{proof}
    The Riemann mapping from $\mathds{H}$ to the cylinder has form of the exponential map, and this implies that the conformal measure of the top will be equal to:
    $$
    \omega_0(\text{top}) = \sigma_-/(\sigma_++\sigma_-)
    $$
    Now since $\sigma_+\sigma_-\asymp \alpha$, we know that there exists a $\beta'\in(0,1/2)$ such that $\sigma_- = \sqrt{\alpha\beta'}$ and $\sigma_- =\sqrt {\alpha/\beta'}$. So then:
    \begin{equation*}
        \begin{aligned}
            \beta& = \sqrt{\alpha\beta'}/(\sqrt{\alpha\beta'}+\sqrt{\alpha/\beta'} ) \\
            &= \sqrt{\beta'^2/(1+\beta'^2)}\asymp\beta'
        \end{aligned}
    \end{equation*}
    which immediately gives the result.
\end{proof}
So the following inequalities are attained away from $\beta=0$:
\begin{equation*}
    \begin{aligned}
        |\sigma_{+}|\asymp |\alpha/\beta|^{1/2} && |\sigma_{-}| \asymp |\alpha\beta|)^{1/2}
    \end{aligned}
\end{equation*}
Finally, since we ultimately developing a toy model, we ``paste over" the case at $\beta=0$ by noting that the above statement also implies the following:
\begin{equation*}
    \begin{aligned}
        |\sigma_{+}|\asymp (|\alpha|/(|\beta|+|\alpha|)^{1/2} && |\sigma_{-}| \asymp |\alpha|(|\beta|+|\alpha|)^{1/2}
    \end{aligned}
\end{equation*}
which will extend naturally to $\beta=0$ and exhibit the Shishikura behavior we expect there. Note that these fixed points correspond directly to the $\mathcal{M}(r,s)$ parameter described in \ref{maxminy} once we pass to the logarithm. Furthermore, the placement of the heights corresponds to the fact that both maximum heights will always lie between the two critical points.

\section{Acknowledgments}

This work was supported by the Additional Funding Programme for Mathematical Sciences, delivered by EPSRC (EP/V521917/1) and the Heilbronn Institute for Mathematical Research. I would like to thank the Heilbronn Institute for Mathematical Research for generally being very supportive for this project, and Imperial College London for hosting me throughout. I would like to also give special thanks to Davoud Cheraghi for his supervision, guidance, and much discussion. I am grateful to Xavier Buff and Arnaud Cheritat for sharing their unpublished alternative approach for developing a uni-critical model \cite{arithmetichedge}, and I am also thankful to Curtis McMullen for giving comments on an earlier pre-print of this work. For any comments or questions please get in touch via my email: j.russell22@imperial.ac.uk
\nocite{*}


\bibliographystyle{unsrt}
\bibliography{bib}

\end{document}